\tikzstyle{bigbox} = [draw=blue!50, thick, rounded corners, rectangle]
\tikzset{
>=stealth'
}
\newcommand{\polymakejl}{{\texttt{Polymake.jl}}\xspace}
\newcommand{\polymake}{{\texttt{polymake}}\xspace}
\newcommand{\cellularSheaves}{{\texttt{cellularSheaves}}\xspace}
\newcommand{\julia}{{\texttt{Julia}}\xspace}
\newcommand{\jupyter}{{\texttt{Jupyter}}\xspace}
\newcounter{internal}[section]
\newaliascnt{intcor}{internal} 
\newaliascnt{intconj}{internal} 
\newaliascnt{intlemma}{internal} 
\newaliascnt{intdef}{internal} 
\newaliascnt{intex}{internal} 
\newaliascnt{intprop}{internal} 
\newaliascnt{intrem}{internal} 
\newaliascnt{intthm}{internal}
\newcommand*{\dupcntr}[2]{%
   \expandafter\let\csname c@#1\expandafter\endcsname\csname c@#2\endcsname
}
\theoremstyle{definition}
\newtheorem{lemma}[intlemma]{Lemma}
\newtheorem{definition}[intdef]{Definition}
\newtheorem{example}[intex]{Example}
\newtheorem{prop}[intprop]{Proposition}
\theoremstyle{remark}
\newtheorem{remark}[intrem]{Remark}
\theoremstyle{plain}
\newtheorem{theorem}[intthm]{Theorem}
\newcommand{\macrocolor}[1]{{#1}}
\DeclareMathOperator{\cl}{\mathop{\macrocolor{cl}}} %
\DeclareMathOperator{\cone}{\macrocolor{cone}}
\DeclareMathOperator{\conv}{\macrocolor{conv}}
\DeclareMathOperator{\edges}{\macrocolor{edges}}
\DeclareMathOperator{\hasse}{\macrocolor{HD}}
\DeclareMathOperator{\Hom}{\macrocolor{Hom}}
\DeclareMathOperator{\minface}{\macrocolor{minface}}
\DeclareMathOperator{\minfacevert}{\macrocolor{minfacevert}}
\DeclareMathOperator{\nodes}{\macrocolor{nodes}}
\DeclareMathOperator{\signedIR}{\macrocolor{\mathcal{OR}}}
\DeclareMathOperator{\parent}{\macrocolor{parent}}
\DeclareMathOperator{\realisation}{\macrocolor{rls}}
\DeclareMathOperator{\relint}{\macrocolor{relint}}
\DeclareMathOperator{\trunk}{\macrocolor{trunk}}
\DeclareMathOperator{\sedentarity}{\macrocolor{sed}}
\DeclareMathOperator{\support}{\macrocolor{support}}
\DeclareMathOperator{\vertices}{\macrocolor{vert}}
\DeclareMathOperator{\vsspan}{\macrocolor{span}}
\DeclareMathOperator{\tail}{\macrocolor{rec}}
\DeclareMathOperator{\CP}{\mathbb{C}P}
\newcommand{\disjoint}{\macrocolor{\sqcup}}
\newcommand{\Disjoint}{\mathop{\macrocolor{\coprod}}}
\newcommand{\newVert}{\macrocolor{\mathcal A}}
\newcommand{\pc}{\macrocolor{\mathcal P\mathcal C}}
\newcommand{\powerset}{\macrocolor{\mathcal P\mathcal S}}
\newcommand{\poly}{\macrocolor{P}}
\newcommand{\RR}{\macrocolor{\mathbb R}}
\newcommand{\compact}[1]{\macrocolor{\overline{#1}}}
\newcommand{\compactifying}{\macrocolor{N_{\RR}}}
\newcommand{\sigmamin}{{\macrocolor{t}}}
\newcommand{\surj}{\macrocolor{\twoheadrightarrow}}
\newcommand{\define}[1]{\macrocolor{\emph{#1}}}
\title{Tropical compactification via Ganter's algorithm}
\author{
Lars Kastner,
Kristin Shaw,
Anna-Lena Winz
}
\date{}
\begin{document}

\begin{abstract}

We describe a canonical compactification of a polyhedral complex in Euclidean space. 
When the recession cones of the polyhedral complex form a fan, the compactified polyhedral complex 
is a subspace of a tropical toric variety. In this case, the procedure is analogous to the tropical compactifications 
of subvarieties of tori. 

We give an analysis of the combinatorial structure of the compactification and show that its Hasse diagram can be computed via Ganter's algorithm. 
Our algorithm is implemented in and shipped with \polymake. 

\end{abstract}
\maketitle
\section{Introduction}

Most of the first steps in tropical geometry considered the tropicalisation of
subvarieties of tori \cite{FirstSteps}.  Yet early on Mikhalkin considered the
compactification of tropical curves and amoebas in toric surfaces \cite{Amoebas}, and then  of
hypersurfaces in toric varieties  \cite{PairsOfPants}. Later Payne described ``extended
tropicalisations" for subvarieties of toric varieties \cite{Payne}. 
These extended
tropicalisations  apply the tropicalisation of subvarieties of tori in
an orbit by orbit fashion.  
Previous to this, Tevelev  used tropicalisations to define well-behaved compactifications 
of classical algebraic subvarieties in the torus \cite{TevTropComp}. In this setup, the tropicalisation of a subvariety of the torus determines a toric variety suitable for the compactification of the original variety. 

Our main goal here is to analyse combinatorially and computationally the  structure of  canonical compactifications of tropical varieties. More concretely, given  a tropical variety in Euclidean space,  we describe a canonical compactification of the underlying  polyhedral complex which is compatible with the compactifications of single polyhedra described in \cite{Rabinoff}. 
To define the canonical compactification of a tropical variety, all geometric
data needed is  encoded in a choice of polyhedral structure on the original tropical variety. 

Throughout we let $N$ be a lattice and $N_{\RR} = N \otimes_{\mathbb{Z}} \RR$.
Given a polyhedron $\poly = \{ x \in N_{\RR}  | A x \geq b \}$, its  recession cone is $\tail(\poly)  = \{ x \in N_{\RR} | \ Ax \geq 0\}$.

\begin{definition}\label{def:pc_has_rec_fan}
Let $\pc$ be a polyhedral complex in $N_{\RR}$. We say that $\pc$ \define{has a recession fan}
if for any $\poly,Q\in\pc$ the intersection $\tail(\poly)\cap\tail(Q)$ is a face of
both $\tail(\poly)$ and $\tail(Q)$. In that case the recession cones of the
polyhedra in $\pc$ form a fan, and we call this fan the \define{recession fan of
$\pc$}, denoted by $\tail(\pc)$.
\end{definition}

A polyhedral complex may or may not have a recession fan \cite{recessionFan}.
In the case when it does, we prove the following theorem. %

\begin{theorem}[\autoref{prop:pc}] 
If a polyhedral complex $\pc$ has a recession fan, then its canonical compactification 
 $\compact{\pc}$ is a polyhedral complex in  the tropical toric variety of the fan $\tail(\pc)$.
\end{theorem}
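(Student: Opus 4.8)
Write $\Sigma=\tail(\pc)$. The plan is to realise $\compact{\pc}$ inside $\TV(\Sigma)$ by gluing the single‑polyhedron compactifications, and then to read off its cells stratum by stratum and verify the two axioms of a polyhedral complex: closure under taking faces, and that pairwise intersections are common faces. Recall that $\TV(\Sigma)=\bigsqcup_{\tau\in\Sigma}\compactifying/\vsspan(\tau)$, that the stratum of a cone $\tau$ carries the star fan $\sstar_\Sigma(\tau)$, and that for $\tau\in\Sigma$ the affine piece $\TV(\tau)$ is open in $\TV(\Sigma)$. For a polyhedron $\poly$ with $\tail(\poly)=\sigma$, the compactification $\compact{\poly}$ of \cite{Rabinoff} lies in $\TV(\sigma)$, and in the stratum of a face $\tau\preceq\sigma$ it equals $\pi_\tau(\poly)$, the image of $\poly$ under the projection $\pi_\tau\colon\compactifying\to\compactifying/\vsspan(\tau)$, which is again a polyhedron, with recession cone $\pi_\tau(\sigma)$; moreover \cite{Rabinoff} describes the face poset of $\compact{\poly}$. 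Since $\pc$ has a recession fan, every $\tail(\poly)$ is a cone of $\Sigma$, so $\TV(\tail(\poly))\subseteq\TV(\Sigma)$ and one may set $\compact{\pc}=\bigcup_{\poly\in\pc}\compact{\poly}\subseteq\TV(\Sigma)$; the first point to check is that this agrees with the definition of the canonical compactification.

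Next I identify the cells. The faces of a projected polyhedron $\pi_\tau(\poly)$ are exactly the sets $\pi_\tau(G)$ with $G$ a face of $\poly$ such that $\tau\preceq\tail(G)$: indeed $\pi_\tau(\poly)=\pi_\tau(\poly+\vsspan(\tau))$, the faces of $\poly+\vsspan(\tau)$ are the sets $G+\vsspan(\tau)$ with $G$ a face of $\poly$ admitting a supporting functional that vanishes on $\vsspan(\tau)$ — equivalently with $\tau\preceq\tail(G)$ — and these correspond to the faces of $\pi_\tau(\poly)$. Hence the cells of $\compact{\pc}$ are precisely the closures $\compact{\pi_\tau(G)}$ for $G\in\pc$ and $\tau$ a face of $\tail(G)$, each sitting in $\TV(\sstar_\Sigma(\tau))$. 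Applying the single‑polyhedron face description to $\pi_\tau(G)$, the faces of $\compact{\pi_\tau(G)}$ are the cells $\compact{\pi_\rho(G')}$ with $G'$ a face of $G$ and $\tau\preceq\rho\preceq\tail(G')$; since $\pc$ and $\Sigma$ are both closed under taking faces, all such $G'$ lie in $\pc$ and all such $\rho$ lie in $\Sigma$, so the set of cells is closed under taking faces.

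The remaining axiom is the crux, and the main obstacle. The key lemma is: \emph{if $\tau$ is a face of both $\tail(\poly)$ and $\tail(Q)$, then $\pi_\tau(\poly)\cap\pi_\tau(Q)=\pi_\tau(\poly\cap Q)$.} The inclusion $\supseteq$ is immediate; for $\subseteq$, given $\overline{x}$ in the left-hand side, lift it to $x\in\poly$ and $y\in Q$ with $x-y\in\vsspan(\tau)=\tau-\tau$, write $x-y=v_+-v_-$ with $v_\pm\in\tau$, and note that $x+v_-=y+v_+$ lies in $\poly\cap Q$ because $\tau\subseteq\tail(\poly)\cap\tail(Q)$, and projects to $\overline{x}$. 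Now intersect two cells $\compact{\pi_\tau(F)}$ and $\compact{\pi_{\tau'}(F')}$ stratum by stratum. Using the fan axiom for $\Sigma$ — so that $\tail(F)\cap\tail(F')=\tail(F\cap F')=:\rho_0$ is a common face and any cone of $\Sigma$ inside $\rho_0$ is a face of $\rho_0$ — the strata meeting both cells are exactly those of the faces $\nu$ of $\rho_0$ containing $\tau\vee\tau'$ (the join in the face lattice of $\rho_0$; the intersection is empty unless $\tau$ and $\tau'$ are faces of $\rho_0$), and in the stratum of such a $\nu$ the intersection is $\pi_\nu(F)\cap\pi_\nu(F')=\pi_\nu(F\cap F')$ by the lemma. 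Assembling over strata gives $\compact{\pi_\tau(F)}\cap\compact{\pi_{\tau'}(F')}=\compact{\pi_{\tau\vee\tau'}(F\cap F')}$, which by the face description above is a face of each of the two cells; in particular $\compact{\poly}\cap\compact{Q}=\compact{\poly\cap Q}$. The recession‑fan hypothesis is essential precisely here: it supplies the ambient $\TV(\Sigma)$ and makes the fan axiom available to pin down which strata contribute. The remaining points — that each $\compact{\pi_\tau(G)}$ is a polyhedron of $\TV(\Sigma)$ in the stratified sense, local finiteness of the complex, and agreement with the definition of $\compact{\pc}$ — are routine given \cite{Rabinoff}.
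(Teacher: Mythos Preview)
Your argument is correct and follows essentially the same architecture as the paper's: both describe the cells of $\compact{\pc}$ as the compactifications $\compact{\pi_\tau(G)}$ for $G\in\pc$ and $\tau\preceq\tail(G)$, verify closure under faces via the face description of a single compactified polyhedron, and then check the intersection axiom stratum by stratum, identifying $\compact{\pi_\tau(F)}\cap\compact{\pi_{\tau'}(F')}$ with $\compact{\pi_{\tau\vee\tau'}(F\cap F')}$ (the paper writes the minimal cone as $t$ rather than $\tau\vee\tau'$, but it is the same object).

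The one noteworthy difference is methodological rather than structural. The paper develops the machinery of \emph{parent faces}, \emph{trunk}, and supporting hyperplanes (its Lemmas on projections of faces and preimages) to justify that $\pi_\sigma(F)$ is a face of $\pi_\sigma(\poly)$ and to characterise the faces of $\pi_\tau(\poly)$; you obtain the same characterisation more directly via the faces of $\poly+\vsspan(\tau)$. More importantly, the identity $\pi_\nu(F)\cap\pi_\nu(F')=\pi_\nu(F\cap F')$ for $\nu\subseteq\tail(F)\cap\tail(F')$ is the linchpin of both arguments, and the paper simply asserts it (both in its stratum lemma and in the proof of the theorem), whereas your short ``lift and shift'' argument using $v_+-v_-\in\tau$ actually proves it. So your write-up is, if anything, more self-contained on this point; otherwise the two proofs coincide.
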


By \cite[Remark~3.13]{ossermanRabinoff} any polyhedral complex \(\pc\) has a
compactifying fan which can be obtained by choosing a fan that is a refinement
of the set of cones   \(\cup_{\poly \in \pc} \tail(\poly)\). %
 In fact,  if we consider a polyhedral
complex $\pc$ and an arbitrary fan $\Delta$, many of our algorithmic results
are still applicable if  $\Delta$ refines all the cones $\tail(\poly)$ for
$\poly\in\pc$, see Definition \ref{def:fan_compatible}. 
However, even if  $\tail(\pc)$ is not a fan we can nonetheless define a canonical
compactification  of $\pc$ without taking any refinements, as long as the
recession cone of each face is pointed. However, the resulting compactification
is not a polyhedral complex in a tropical toric variety. It is a more general abstract
polyhedral space in the sense of \cite[Definition 2.1]{JellRauShaw} or
\autoref{def:polyhedralspace}.

\begin{theorem}[\autoref{prop:polyspace}]
The canonical compactification $\compact{\pc}$ is an abstract polyhedral space. 
\end{theorem}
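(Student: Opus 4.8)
The plan is to assemble $\compact{\pc}$ out of the canonical compactifications of its cells and to transport the polyhedral-space structure through the gluing. For each $\poly\in\pc$ the recession cone $\tail(\poly)$ is pointed by hypothesis, so its faces form a pointed fan $\Sigma_\poly$, and $\compact{\poly}$ is the closure of $\poly$ in the tropical toric variety $\TV(\Sigma_\poly)$ in the sense of \cite{Rabinoff}. Applying \autoref{prop:pc} to the polyhedral complex consisting of $\poly$ together with all of its faces --- whose recession fan is precisely $\Sigma_\poly$ --- shows that $\compact{\poly}$ is a polyhedral complex in $\TV(\Sigma_\poly)$, and, since $\tail(\poly)$ is pointed, this complex is compact; thus every $\compact{\poly}$ is a compact abstract polyhedral space. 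By construction, $\compact{\pc}$ is the quotient of $\bigsqcup_{\poly\in\pc}\compact{\poly}$ by the equivalence relation generated by the face identifications $\iota_{\poly',\poly}\colon\compact{\poly'}\hookrightarrow\compact{\poly}$ attached to face inclusions $\poly'\preceq\poly$; it remains to understand this quotient and to equip it with a polyhedral atlas.

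First I would record the functoriality of $\poly\mapsto\compact{\poly}$. If $\poly'\preceq\poly$, then $\tail(\poly')$ is a face of $\tail(\poly)$ (directly from inequality presentations), so $\Sigma_{\poly'}$ is a subfan of $\Sigma_\poly$ and $\TV(\Sigma_{\poly'})$ is an open subset of $\TV(\Sigma_\poly)$; a short argument with limits of sequences in $\poly'$ gives $\compact{\poly'}=\compact{\poly}\cap\TV(\Sigma_{\poly'})$, so $\iota_{\poly',\poly}$ is an isomorphism onto a closed subcomplex of $\compact{\poly}$, the $\iota$'s compose correctly, and $\compact{\poly'}\cap\compact{Q'}=\compact{\poly'\cap Q'}$ inside any common $\compact{\poly}$. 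Because $\pc$ is a polyhedral complex, closure commutes with the relevant intersections of faces, so the generating equivalence relation is already transitive, the maps $\compact{\poly}\to\compact{\pc}$ are injective, and $\mathrm{im}(\compact{\poly})\cap\mathrm{im}(\compact{Q})=\mathrm{im}(\compact{\poly\cap Q})$ for all $\poly,Q\in\pc$. In particular the $\compact{\poly}$ are glued along the closed subcomplexes $\compact{\poly\cap Q}$, the gluing relation on $\bigsqcup_{\poly\in\pc}\compact{\poly}$ is closed, and a standard argument shows $\compact{\pc}$ is Hausdorff (compact Hausdorff when $\pc$ is finite).

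For a chart around a point $x\in\compact{\pc}$, set $\poly_0:=\bigcap\{Q\in\pc : x\in\compact{Q}\}$; by the exactness property above and the fact that $\pc$ is a polyhedral complex, $\poly_0$ is a cell of $\pc$ and a common face of all such $Q$, so each $\tail(Q)$ with $x\in\compact{Q}$ contains $\tail(\poly_0)$. Here is where the polyhedral-complex hypothesis does the essential work: any two of these cones satisfy $\tail(Q)\cap\tail(Q')=\tail(Q\cap Q')$ --- a non-empty intersection, since it contains $\poly_0$ --- which is a common face of both, so these finitely many cones, together with all their faces, form a genuine fan $\Delta$. Hence every $\compact{Q}$ with $x\in\compact{Q}$ embeds compatibly into the single tropical toric variety $\TV(\Delta)$, their union $P$ is a polyhedral complex there, and a small enough open neighbourhood of $x$ in $\compact{\pc}$ --- for instance the open star of the cell through $x$ --- is carried homeomorphically onto an open subset of $|P|$. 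On overlaps two such charts differ only by the identification induced by the open immersions $\TV(\Sigma_{\poly\cap Q})\hookrightarrow\TV(\Sigma_\poly)$ and $\TV(\Sigma_{\poly\cap Q})\hookrightarrow\TV(\Sigma_Q)$, which are isomorphisms onto open subsets and hence integral affine; so the transition maps are integral affine, and $\compact{\pc}$, with the sheaf of functions affine in these charts, is an abstract polyhedral space in the sense of \autoref{def:polyhedralspace}. I expect the main obstacle to be exactly this last construction: seeing that a whole neighbourhood of a point at infinity --- not just each cell through it --- lies in a single partial compactification, namely that the local recession data form a fan. This is the precise point at which one needs $\pc$ to be a polyhedral complex; without it one must either refine, as in \autoref{prop:pc}, or leave the category of tropical toric varieties. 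Everything else --- the functoriality, the identity $\compact{\poly'}=\compact{\poly}\cap\TV(\Sigma_{\poly'})$, compactness of $\compact{\poly}$, closedness of the gluing relation, and integral affinity of the transition maps --- is routine given \autoref{prop:pc} and the single-polyhedron theory of \cite{Rabinoff}.
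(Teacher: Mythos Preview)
Your argument is correct and follows the same overall outline as the paper's proof: topologise $\compact{\pc}$ as a colimit of the $\compact{\poly}$, check compactness and Hausdorffness, and build charts from open stars. The paper's proof is a three-sentence sketch; yours supplies the details the paper omits.

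The one substantive addition you make is the local-fan observation: for a point $x\in\compact{\pc}$, the cells $Q$ with $x\in\compact{Q}$ all contain a common minimal cell $\poly_0\in\pc$, hence meet pairwise, and therefore $\tail(Q)\cap\tail(Q')=\tail(Q\cap Q')$ is a face of both. This is correct (your zigzag-shortening remark, together with \autoref{lemma:P_pc_2} inside each $\compact{\poly}$, gives $\mathrm{im}(\compact{Q})\cap\mathrm{im}(\compact{Q'})=\mathrm{im}(\compact{Q\cap Q'})$), and it is precisely what is needed to embed an entire open neighbourhood of $x$ into a single tropical toric variety $\compactifying(\Delta)$, even when $\pc$ has no global recession fan. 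The paper simply asserts a chart $U_\tau\to\mathbb{T}^{r_\tau}\times\RR^{n_\tau}$ indexed by faces $\tau$ of $\pc$ without explaining what $r_\tau,n_\tau$ are or why the open stars cover the boundary strata; your local-fan construction is effectively the missing justification. One small point: to land literally in the target of \autoref{def:polyhedralspace} you should compose with an affine chart $\compactifying(\sigma)\hookrightarrow\mathbb{T}^r$ of $\compactifying(\Delta)$ rather than stop at $\compactifying(\Delta)$ itself, but this is routine.
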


Here we  will describe the compactification $\compact{\pc}$ via the Hasse
diagram of its face lattice. A Hasse diagram is a graphical representation of a partially ordered set. 
The  vertices, also known as nodes, of the Hasse diagram correspond to elements of the set and 
the edges correspond to covering relations, with edges being directed upwards, towards the larger sets. 
One very
efficient algorithm for computing Hasse diagrams or general closure systems is
Ganter's algorithm  \cite{ganter_orig}.  To use Ganter's algorithm we have to construct a closure
operator for our setting.  This closure operator takes any subset of vertices to
the smallest face containing it.  
Therefore, applying Ganter's algorithm first necessitates the expression of  the
vertices and faces of the  compactification in the data encoding the original
polyhedral complex.

Our algorithmic results give rise to a closure operator in
\autoref{prop:closure_operator}. 
For the concrete problem of determining $\compact{\pc}$
it turns out that no new geometric information
is needed, hence we can state the following theorem.

\begin{theorem}
The Hasse diagram of the compactification $\compact{\pc}$ 
can be computed via Ganter's algorithm in a
purely combinatorial way.
\end{theorem}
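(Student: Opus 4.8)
The plan is to observe that Ganter's ``next closure'' algorithm reduces the computation of a Hasse diagram to repeated evaluations of a single closure operator, and then to check that, for $\compact{\pc}$, both the ground set and this closure operator are available from the combinatorial data of $\pc$ alone. Concretely, Ganter's algorithm takes as input a finite set $E$ and an oracle for a closure operator $\cl\colon 2^E\to 2^E$; it enumerates the closed subsets of $E$ in lectic order while recording their covering relations, and it never uses any structure on $E$ other than values of $\cl$. Thus, once $E$ and $\cl$ are exhibited combinatorially, every step of the algorithm is an elementary operation on finite sets, and its output is the Hasse diagram of the lattice of closed sets.

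First I would take $E$ to be the vertex set of $\compact{\pc}$. By the structural description of $\compact{\pc}$ obtained above, these vertices are the vertices of $\pc$ together with the boundary vertices, each of the latter recorded by combinatorial data described above (a face of $\pc$ together with a cone of the recession fan $\tail(\pc)$, respectively a pointed recession cone in the general case). This is a finite list read directly off the Hasse diagram $\hasse(\pc)$ and the fan $\tail(\pc)$, involving no metric input.

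Next I would take $\cl$ to be the map sending a set $S$ of vertices of $\compact{\pc}$ to the smallest face of $\compact{\pc}$ containing $S$. That this is a closure operator whose closed sets are exactly the faces of $\compact{\pc}$ is \autoref{prop:closure_operator}; the remaining point is that a single value $\cl(S)$ is computed combinatorially. For this I would unwind the proof of \autoref{prop:closure_operator}: the smallest face of $\compact{\pc}$ through $S$ is assembled from (i) the smallest face of $\pc$ containing the finite vertices of $S$, a meet in the face poset of $\pc$ (or the improper element, when there is no common face), and (ii) the smallest cone of $\tail(\pc)$ containing the recession data attached to the boundary vertices of $S$, a join in $\tail(\pc)$; these are then combined according to the incidence rules for faces of $\compact{\pc}$. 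Each operation is a lookup or an elementary set operation on the already-computed objects $\hasse(\pc)$ and $\tail(\pc)$. Feeding this $E$ and $\cl$ into Ganter's algorithm then returns the Hasse diagram of the lattice of closed sets, which by \autoref{prop:closure_operator} is the face lattice of $\compact{\pc}$ — the desired Hasse diagram — and the whole computation used nothing beyond combinatorial data of $\pc$.

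I expect the main obstacle to be the combinatorial evaluation of $\cl$: one must verify that ``smallest face of $\compact{\pc}$ containing $S$'' genuinely decouples into a face-poset meet for the finite part of $S$ and a recession-fan join for the boundary part, with no hidden geometric compatibility to enforce — for instance that a chosen face of $\pc$ and a chosen cone of $\tail(\pc)$ always bound an actual face of $\compact{\pc}$ rather than forcing a passage to a larger face, and that boundary vertices arising from different faces of $\pc$ are amalgamated correctly. Getting this incidence bookkeeping and the degenerate cases (an $S$ lying in no face; recession cones that must be merged) exactly right is the delicate part, and it is precisely what \autoref{prop:closure_operator} and the structural results preceding it are set up to handle.
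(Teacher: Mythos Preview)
Your overall plan---take the vertex set of $\compact{\pc}$ as ground set, use the closure operator of \autoref{prop:closure_operator}, and run Ganter---is exactly the paper's approach. The issue is your ``unwinding'' of that closure operator, which does not match the paper's formula and is in fact incorrect on both pieces.

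The paper's closure of $S\subseteq\newVert$ is
\[
\compact{S}\ =\ \{a\in\newVert\mid \sedentarity(S)\subseteq\realisation(a)\subseteq\minfacevert(S)\}.
\]
Here $\minfacevert(S)$ is the vertex/ray set of the smallest face of $\pc$ containing the union $\realisation(S)=\bigcup_{a\in S}\realisation(a)$ of \emph{all} realisations, each boundary vertex contributing both the vertices and the rays of its parent face---not merely ``the finite vertices of $S$''. And $\sedentarity(S)=\bigcap_{a\in S}\sedentarity(a)$ is the \emph{intersection} of the ray-sets; the cone it spans is the deepest stratum every element of $S$ already lives in (see \autoref{lemma:trunk_from_rays}), which is the trunk of the smallest containing face. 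This is a meet of recession cones, not the ``smallest cone containing the recession data, a join in $\tail(\pc)$'' that you describe. Geometrically: two boundary vertices with different sedentarities are joined by a face whose trunk is their common sedentarity, not the span of their directions.

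So your decomposition has both halves reversed: the face-poset operation on realisations is a join (smallest face containing a set), and the recession-side operation is an intersection of sedentarities. Once corrected to the paper's explicit formula, the closure is visibly computable from the incidence data of $\pc$ alone, and the compatibility concerns in your final paragraph dissolve: there is no separate check that a face of $\pc$ and a cone of $\tail(\pc)$ fit together, because the formula already filters $\newVert$ by two purely set-theoretic containments.
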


Our implementation is shipped with the combinatorial software framework
\polymake \cite{polymake} since release 3.6, and hence it is available via many package
managers on Linux and in the MacOS \polymake bundle. It can even be used on
Windows via the \texttt{Windows Subsystem for Linux} (WSL). Furthermore,
\polymake is interfaced in \julia \cite{julialang} via \polymakejl
\cite{polymakejl}. Hence our algorithm is accessible to a large community of
mathematicians, namely the users of \polymake and \julia, and it is embedded in
frameworks that provide a wide variety of tools for analysing and using the
tropical compactification. By using \polymake we took advantage of its existing
templated version of Ganter's algorithm by Simon Hampe and Ewgenij Gawrilow.
The necessary data types, like Hasse diagrams, polyhedral complexes, tropical
varieties, and chain complexes are already implemented in \polymake, making our
codebase slim and easy to maintain. For using the compactification in
subsequent research, \polymake already has cellular sheaves \cite{ourTropHom} and patchworkings
\cite{paul}, as well as many other tools from tropical geometry. Last, but not
least, \polymake comes with a built in serialization framework, such that the
compactification can easily be stored in a file, and testing framework,
ensuring robustness of our implementation. 

In fact, our main motivation  to study canonical compactifications is to
extend the use of the \polymake extension \cellularSheaves for tropical
homology and patchwork to compact tropical varieties.
In the case of  tropicalisations of projective complex
varieties satisfying additional assumptions, the dimensions of the tropical
homology groups are equal to the corresponding Hodge numbers. The assumption
that the variety is projective implicitly assumes that the tropicalisation
under consideration is an extended tropicalisation in the sense of Payne or
Mikhalkin, and is hence compact. Therefore, the Hasse diagram of the
compactification is necessary for such computations, together with a signed incidence
relation that we describe in \autoref{sec:sir}.  

In \autoref{sec:prelim} we will give the necessary definitions from tropical
and algorithmic geometry for our setup. Afterwards in
\autoref{sec:face_lattice} we describe the Hasse diagram
of the compactification
for both a single polyhedron and a polyhedral complex. The data structure of the compactification is described in \autoref{sec:datastr}.  In \autoref{sec:sir}, we
give a simple algorithm for computing a signed incidence relation necessary for
computing cohomology of cellular sheaves. Lastly, \autoref{sec:polymake}
contains examples with code computed in \polymake.
Throughout the text we emphasis many examples which exhibit the pathologies of
the canonical compactification, as well as its many applications to tropical geometry and beyond.

\subsection{Acknowledgements}
We are very grateful to Michael Joswig for advice throughout the implementation
and for suggesting the connection to Ganter's algorithm.
We are very grateful to Benjamin Lorenz for advice on designing our codebase
and for helping to solve many complex implementation specific issues.

We also thank Joswig, Marta Panizzut, and Paul Vater for their comments which helped us
improve a preliminary draft of this paper. 

\section{Preliminaries}\label{sec:prelim}

\subsection{Tropical toric varieties and polyhedral complexes}

In this section we will describe the basic setup, following the definitions and
notation of \cite{ossermanRabinoff}, \cite{Payne}, \cite{MikRau}. %
Throughout we let $N \cong \mathbb{Z}^n$ denote a lattice and $N_{\RR} = N \otimes_{\mathbb{Z}} \RR$. 

\begin{definition}\cite[2.4]{ossermanRabinoff}
For a
rational polyhedral fan $\Delta\subseteq N_{\RR}$ the 
\define{tropical toric variety}
$\compactifying(\Delta)$ of $N_{\RR}$ with respect to $\Delta$ 
is
\[
\compactifying(\Delta)\ :=\ \Disjoint_{\sigma\in\Delta}N_{\RR}/\vsspan(\sigma).
\]
\end{definition}
The tropical toric variety   $\compactifying(\Delta)$ is  equipped with
the unique topology such that 
\begin{itemize}
\item The inclusions $N_{\RR}/\vsspan(\sigma) \hookrightarrow \compactifying(\Delta)$ are continuous for any cone $\sigma \in\Delta$.
\item For any $x \in N_{\RR}$ and any $v \in N_{\RR}$, the sequence $\left(x+nv\right)_{n\in\mathbb{N}}\in N_{\RR}$ converges in $ \compactifying(\Delta)$ if and only if $v$ is contained in the support of the fan $\Delta$.
\end{itemize}
The reader is directed to 
\cite[Section 3]{Payne} for more details.
The tropical toric variety $\compactifying(\Delta)$ is compact if and only if
the polyhedral fan $\Delta$ is complete.
We denote the single stratum $N_{\RR}/\vsspan(\sigma)$ by
$\compactifying(\sigma)$.
If \(\Delta\) is a pointed fan, then \(N_{\RR}\) can be canonically identified with
the open subset \(\compactifying({0}) \subset \compactifying(\Delta)\).

\begin{definition}\label{def:pc}
A \define{polyhedral complex} $\pc$  in a tropical toric variety
$\compactifying(\Delta)$ is a finite collection of polyhedra in
$\compactifying(\Delta)$ such that   $\pc \cap \compactifying(\rho)$ is   a
polyhedral complex in $\compactifying(\rho) \cong \mathbb{R}^{\text{codim}
\rho}$ for every cone $\rho $ of $\Delta$, and which satisfies: 
\begin{enumerate}
\item for a polyhedron $\poly \in \pc$, if $Q$ is a face of $\poly$, which
is denoted $Q \le \poly$, we have $Q \in \pc$; 
\item  for $\poly, \poly' \in \pc$, if $Q = \poly \cap \poly'$ is
non-empty then $Q$ is a face of both $\poly$ and $\poly'$. 
\end{enumerate}
\end{definition}

\begin{definition}\cite[Definition~3.1]{ossermanRabinoff}\label{def:fan_compatible}
Let \(\mathcal{\poly}\) be a finite collection of polyhedra in \(N_{\RR}\), 
and \(\Delta\) a pointed fan. 
The fan \(\Delta\) is said to be \define{compatible with \(\mathcal{\poly}\)} if
for all \(\poly \in \mathcal{\poly}\) and all cones \(\sigma \in \Delta\),
either \(\sigma \subset \tail(\poly)\) or \(\relint(\sigma) \cap \tail(\poly) = \emptyset\).

The fan \(\Delta\) is said to be a \define{compactifying fan for \(\mathcal{\poly}\)}
if for all \(\poly \in \mathcal{\poly}\), the recession cone is the union of cones in 
\(\Delta\).
\end{definition}

\begin{example}\label{example:fan_incompatible}
For an example of $\Delta$ being incompatible with $\poly$ consider the following polyhedra:
\[
\begin{array}{ccc}
\poly = 
\begin{tikzpicture}[baseline=.5cm]
\draw[color=black!40] (-1.3,-.3) grid (1.3,1.3);
\draw[thick] (-1.3,1.3) -- (0,0) -- (1.3,1.3);
\fill[pattern color=black!40, pattern=dots] (-1.3,1.3) -- (0,0) -- (1.3,1.3) -- cycle;
\end{tikzpicture}
& \mbox{ and } &
\Delta = 
\begin{tikzpicture}[baseline=.5cm]
\draw[color=black!40] (-.3,-.3) grid (1.3,1.3);
\draw[thick] (0,1.3) -- (0,0) -- (1.3,0);
\fill[pattern color=black!40, pattern=dots] (0,1.3) -- (0,0) -- (1.3,0) -- (1.3,1.3) -- cycle;
\end{tikzpicture}
\end{array}.
\]
In this case, $\poly=\tail(\poly)$. The fan $\Delta$ has only one
maximal cone which intersects $\tail(\poly)$ improperly. This example serves
for us to show what goes wrong when $\poly$ and $\Delta$ are incompatible.
\end{example}

Given a polyhedron $\poly \subset N_{\RR}$ we can take its closure
$\compact{\poly}$ in $\compactifying(\Delta)$. 
Following \cite{ossermanRabinoff}, whether or not $\compact{\poly}$ intersects
a stratum of $\compactifying(\Delta)$ corresponding to a cone $\sigma$ of
$\Delta$ depends on the recession cone of $\poly$. 

To explicitly describe the intersection of $\compact{\poly}$ with each stratum
of $\compactifying(\Delta)$ we must consider the  projections
$\pi_{\sigma}:N_{\RR}\surj \compactifying(\sigma)$ for every
$\sigma\in\Delta$. Then when the intersection $\compact{\poly} \cap
\compactifying(\sigma)$ is non-empty, it is equal to $\pi_{\sigma}(\poly)$.
These statements are summarised in the following lemma from
\cite{ossermanRabinoff}. 
\begin{lemma}[{\cite[Lemma 3.9]{ossermanRabinoff}}]\label{def:compactification}
  Let $\Delta$ be a compactifying fan of $\poly$.  The compactification $\compact{\poly}$ of a polyhedron $\poly$ in
   $\compactifying(\Delta)$ is
   \[
   \compact{\poly}\ :=\ \Disjoint_{\sigma\in\Delta, \  \relint(\sigma)\cap\tail(\poly)\not= \emptyset} \pi_{\sigma}(\poly).
   \]
\end{lemma}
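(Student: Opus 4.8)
The plan is to compute $\compact{\poly}$ one torus orbit at a time. Since $\compactifying(\Delta)=\Disjoint_{\sigma\in\Delta}\compactifying(\sigma)$ as a set, it suffices to prove, for each $\sigma\in\Delta$, that $\compact{\poly}\cap\compactifying(\sigma)$ equals $\pi_\sigma(\poly)$ when $\relint(\sigma)\cap\tail(\poly)\ne\emptyset$ and is empty otherwise; summing over $\sigma$ then gives the stated formula. I would carry this out inside the affine chart $U_\sigma$ of $\compactifying(\Delta)$. Recall from the construction of tropical toric varieties (see \cite[Section~3]{Payne} and \cite{ossermanRabinoff}) that, with $M=\Hom(N,\mathbb Z)$ and $\overline{\RR}=\RR\cup\{\infty\}$ under addition, $U_\sigma$ is identified with $\Hom(\sigma^\vee\cap M,\overline{\RR})$ carrying the topology of pointwise convergence, that $N_{\RR}$ embeds via $x\mapsto\scalp{\,\cdot\,,x}$, and that under this identification $\compactifying(\sigma)$ is the set of homomorphisms finite precisely on $\sigma^\perp\cap M$, carrying its quotient identification $\compactifying(\sigma)=N_{\RR}/\vsspan(\sigma)=\Hom(\sigma^\perp\cap M,\RR)$. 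Since $\poly\subseteq N_{\RR}\subseteq U_\sigma$ and $\compactifying(\sigma)\subseteq U_\sigma$, and closure in a subspace is the ambient closure intersected with that subspace, we have $\compact{\poly}\cap\compactifying(\sigma)=\cl_{U_\sigma}(\poly)\cap\compactifying(\sigma)$, so the whole argument takes place in $U_\sigma$.

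Assume first that $\relint(\sigma)\cap\tail(\poly)\ne\emptyset$, and fix $v$ in this intersection. For the inclusion $\pi_\sigma(\poly)\subseteq\compact{\poly}$ I use the sequences $(p+nv)_{n\in\mathbb N}$ for $p\in\poly$: each lies in $\poly$ because $v\in\tail(\poly)$, and evaluating at $m\in\sigma^\vee\cap M$ gives $\scalp{m,p+nv}\to\scalp{m,p}$ if $m\in\sigma^\perp$ and $\scalp{m,p+nv}\to\infty$ otherwise (since $v\in\relint(\sigma)$ forces $\scalp{m,v}>0$ for $m\in\sigma^\vee\setminus\sigma^\perp$), so $p+nv\to\pi_\sigma(p)$ in $U_\sigma$. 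For the reverse inclusion, note that restricting homomorphisms to $\sigma^\perp\cap M$ gives a continuous retraction $r_\sigma\colon U_\sigma\to\compactifying(\sigma)$ (well defined because $\sigma^\perp\cap M$ is a group, so its elements have finite image) whose restriction to $N_{\RR}$ is $\pi_\sigma$, while $\pi_\sigma(\poly)$, being the image of a polyhedron under a linear surjection, is a polyhedron and hence closed in $\compactifying(\sigma)$. Thus if a net $(p_i)$ in $\poly$ converges in $U_\sigma$ to $y\in\compactifying(\sigma)$, then $\pi_\sigma(p_i)=r_\sigma(p_i)\to r_\sigma(y)=y$, so $y\in\pi_\sigma(\poly)$.

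Assume now $\relint(\sigma)\cap\tail(\poly)=\emptyset$; I must show $\compact{\poly}\cap\compactifying(\sigma)=\emptyset$. The key step is a separation statement: there exists $u\in(\sigma^\vee\setminus\sigma^\perp)\cap M$ with $\scalp{u,w}\le 0$ for every $w\in\tail(\poly)$. Granting it, $x\mapsto\scalp{u,x}$ is bounded above on $\poly$ (a linear functional is bounded above on a polyhedron exactly when it is nonpositive on the recession cone), whereas every point of $\compactifying(\sigma)$ takes the value $\infty$ at the coordinate $u$ (as $u\notin\sigma^\perp$), so no net in $\poly$ can converge in $U_\sigma$ to a point of $\compactifying(\sigma)$. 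To prove the separation statement I argue by contraposition with cone duality. Because $\Delta$ is a compactifying fan of $\poly$, the cone $\tail(\poly)$ is a union of cones of $\Delta$ and hence rational, so $\sigma^\vee\cap\bigl(-\tail(\poly)^\vee\bigr)=(\sigma-\tail(\poly))^\vee$ is a rational cone; if it contained no lattice point outside $\sigma^\perp$, it would be contained in the rational subspace $\sigma^\perp$. Dualizing --- both are closed convex cones, $\sigma-\tail(\poly)$ is polyhedral hence closed, and $(\sigma^\perp)^\vee=\vsspan(\sigma)$ --- yields $\vsspan(\sigma)\subseteq\sigma-\tail(\poly)$. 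Then, picking any $v\in\relint(\sigma)$ and writing $-v=s-w$ with $s\in\sigma$ and $w\in\tail(\poly)$, we get $w=v+s\in\relint(\sigma)\cap\tail(\poly)$ (using $\relint(\sigma)+\sigma=\relint(\sigma)$), contradicting the assumption of this case.

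Combining the two cases over all cones of $\Delta$ gives the stated formula. The step I expect to be the genuine obstacle is the separation statement: naively separating $v\in\relint(\sigma)$ from the closed convex cone $\tail(\poly)$ by Hahn--Banach produces a functional that need not be nonnegative on all of $\sigma$ when $\sigma$ is not full-dimensional, which is precisely why the argument has to be routed through the cone-duality identity above. It is also exactly here that the hypothesis ``$\Delta$ is a compactifying fan of $\poly$'' is used --- to guarantee $\tail(\poly)$ is rational, so that the separating functional may be taken in $M$ and is an honest coordinate of the chart $U_\sigma$; everything else uses only the construction of the topology on $\compactifying(\Delta)$ recalled above together with standard facts about polyhedra.
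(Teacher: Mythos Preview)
The paper does not prove this lemma at all: it is quoted verbatim from \cite[Lemma~3.9]{ossermanRabinoff} and used as a black box (with the ``non-empty stratum'' half singled out again as \autoref{lemma:P_pc_0}, citing \cite[Corollary~3.7]{ossermanRabinoff}). So there is no in-paper argument to compare against; what you have written is a self-contained proof of the cited result.

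Your argument is correct. The orbit-by-orbit reduction to the affine chart $U_\sigma=\Hom(\sigma^\vee\cap M,\overline{\RR})$ is the right framework; in Case~1 the sequence $p+nv$ with $v\in\relint(\sigma)\cap\tail(\poly)$ does converge to $\pi_\sigma(p)$ (your use of $\scalp{m,v}>0$ for $m\in\sigma^\vee\setminus\sigma^\perp$ is exactly the characterisation of $\relint(\sigma)$), and the retraction $r_\sigma$ is well defined and continuous for the reasons you give. In Case~2 the cone-duality route is clean: the identity $\sigma^\vee\cap(-\tail(\poly)^\vee)=(\sigma-\tail(\poly))^\vee$ and biduality for the closed polyhedral cone $\sigma-\tail(\poly)$ are both valid, and the final step $\relint(\sigma)+\sigma=\relint(\sigma)$ finishes it. Your diagnosis of where the ``compactifying fan'' hypothesis enters is accurate: it forces $\tail(\poly)$ to be a (finite) union of rational cones of $\Delta$, hence itself a rational polyhedral cone, so that the separating functional can be chosen in $M$ and is a genuine coordinate of $U_\sigma$. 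One cosmetic remark: since $\sigma^\vee\cap M$ is countable and $\overline{\RR}$ is metrisable, $U_\sigma$ is first countable, so you may replace nets by sequences throughout if you prefer.
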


The main condition to ensure that \(\compact{\poly}\) is indeed compact is that $\Delta$ 
is a compactifying fan for \(\poly\), meaning that it refines the recession cone of $\poly$. In other words, 
we have
\begin{equation}
\tail(\poly)\ =\ \bigcup_{\sigma\in\Delta,\ \relint(\sigma)\cap\tail(\poly)\not= \emptyset} \sigma. \label{condition:covering}
\end{equation}
Then $\compact{\poly} \cap \compactifying(\sigma) \neq \emptyset $ if and only if 
\(\tail(\poly) \cap \relint(\sigma) \neq \emptyset\).

\begin{remark}\label{lemma:P_pc_0}\cite[Corollary~3.7]{ossermanRabinoff}
The intersection of the compactification with a stratum 
\(\compact{\poly} \cap \compactifying(\sigma)\) 
is the polyhedron \(\pi_{\sigma}(\poly)\) whenever the intersection is non-empty.
Notice that the intersection of \(\compact{\poly}\) with a stratum \(\compactifying(\sigma)\)
is not necessarily compact.  That is no surprise, since the intersection with the stratum
\(\compactifying(0)\) is the (non-compact) polyhedron \(\poly\) that
we started with. 
\end{remark}

\begin{example}[{\cite[Example~3.20]{Rabinoff}}]\label{example:pos_orthant}
Consider the following $\poly$ having the positive orthant as recession cone:
\[
\begin{tikzpicture}
\draw[color=black!40] (-.3,-.3) grid (4.3,4.3);
\draw[thick] (0,4.3) -- (0,1) -- (1,0) -- (4.3,0);
\draw[-stealth, thick] (0,1) -- (0,4.3);
\draw[-stealth, thick] (1,0) -- (4.3,0);
\fill[pattern color=black!40, pattern=dots] (0,4.3) -- (0,1) -- (1,0) -- (4.3,0) -- (4.3,4.3) -- cycle;
\path[-stealth] (4.5,2) edge node[below]{$\pi_{\RR_{\ge 0}\times\{0\}}$} (6.5,2);
\path[-stealth] (2,4.5) edge node[right]{$\pi_{\{0\}\times\RR_{\ge 0}}$} (2,5.5);
\path[-stealth] (4.5,4.5) edge node[below right]{$\pi_{\RR_{\ge 0}^2}$} (6.5,5.5);
\draw[color=black!40] (7,-.3) -- (7,4.3);
\draw[-stealth, thick] (7,0) -- (7,4.3);
\draw[color=black!40] (-.3,6) -- (4.3,6);
\draw[-stealth, thick] (0,6) -- (4.3,6);
\fill (7,6) circle (2pt);
\fill (1,0) circle (2pt);
\fill (0,1) circle (2pt);
\fill (7,0) circle (2pt);
\fill (0,6) circle (2pt);
\end{tikzpicture}
\]
The compactification has five vertices indicated by dots. For the
compactification we chose $\Delta=\tail(\poly)$, the fan having the recession
cone of $\poly$ as single maximal cone.
\end{example}

From now on we assume that \(\Delta\) has no lineality to avoid effects like in the following example.
\begin{example} \label{example:lineality}
Let \(\poly = \RR\), then \(\Delta = \tail(\poly)= \RR\), and  it is not a pointed fan. 
Then 
\(\compactifying(\Delta) = \compact{\poly} = \pi_{\RR}(\RR)\)  
which is just a point. 
Since \(0 \) is not a cone of \( \Delta\) the set \(N_{\RR}\) cannot even be seen as an open subset of \(\compactifying(\Delta)\).
\end{example}

Lastly, we give the definition of abstract polyhedral space from   \cite{JellShawSmacka} and \cite{JellRauShaw}.  This describes the structure of the compactification when the recession cones of a polyhedral complex do not form a fan.  Here  $\mathbb{T} := [-\infty, \infty)$ and is equipped with the topology of the half open interval. %
Notice that $\mathbb{T}^r = N_{\RR}(\Delta)$, where $\Delta$ is the cone in $\RR^r$ generated by the $r$ standard basis vectors. Hence it is a tropical toric variety.

\begin{definition}
\label{def:polyhedralspace}
A \emph{polyhedral space} $X$ is a paracompact, second countable Hausdorff topological space with an atlas of charts 
$(\varphi_{\alpha} \colon U_{\alpha} \rightarrow \Omega_{\alpha} \subset X_{\alpha})_{{\alpha} \in A}$ such that:
\begin{enumerate}
\item \label{def:charts}
The $U_{\alpha}$ are open subsets of $X$, the $\Omega_{\alpha}$ are open subsets of polyhedral subspaces 
$X_{\alpha} \subset \mathbb{T}^{r_{\alpha}}$, and the maps 
$\varphi_{\alpha} \colon U_{\alpha} \rightarrow \Omega_{\alpha}$ are homeomorphisms for all $\alpha$; 
\item \label{def:trans}
for all $\alpha, \beta \in A$ the transition maps
\begin{align*}
\varphi_{\alpha}\circ \varphi^{-1}_ \beta\colon \varphi_ \beta(U_{\alpha} \cap U_ \beta) \rightarrow {\varphi_\alpha(U_{\alpha} \cap U_ \beta)}
\end{align*}
are  extended affine linear maps, see \cite[Definition 2.18]{JellShawSmacka}.
\end{enumerate} 
\end{definition}

\subsection{Ganter's algorithm}\label{sec:ganter}
We want to use Ganter's algorithm for computing closure systems in order to
compute the Hasse diagram $\hasse(\compact{\pc})$ of the compactification
$\compact{\pc}$. We will follow the notation of \cite{ganter}, the original
work by Ganter can be found in \cite{ganter_orig}, which is an English reprint
of a German preprint from 1984. The input for Ganter's algorithm is a closure system.

\begin{definition}[{\cite[Def. 2.1]{ganter}}]\label{def:closure_operator}
   A \define{closure operator} on a set $S$ is a function $\cl: \mathcal{\powerset}(S) \to
   \mathcal{\powerset}(S)$
   on the power set of $S$, which fulfills the following axioms
   for all subsets $A,B\subseteq S$:
   \begin{enumerate}
      \item[(i)] $A \subseteq \cl(A)$ (Extensiveness).
      \item[(ii)] If $A \subseteq B$ then $\cl(A) \subseteq \cl(B)$ (Monotonicity).
      \item[(iii)] $\cl(\cl(A)) = \cl(A)$ (Idempotency).
   \end{enumerate}
   A subset $A$ of $S$ is called \define{closed}, if $\cl(A) = A$.
   The set of all closed sets of $S$ with respect to some closure operator is
   called a \define{closure system}.
\end{definition}

\begin{example}\label{example:closure_operator}
For a polytope $\poly$, the set $S$ would be the vertices $\vertices(\poly)$
and the closure operator $\cl(A)$ for $A\subseteq S$ would list the vertices of
the smallest face containing $A$. This is also the approach we want to use in
our setting.

This use of the closure operator also works for other combinatorial objects,
like cones, fans, polyhedral complexes, and flats of matroids, see
\autoref{sec:comp_pc}.

\end{example}

The idea of Ganter's algorithm is to start out with the empty set and then
successively add vertices until the full closure system is computed. The
algorithm is designed in a way that it is output sensitive, i.e. its running
time is linear in the number of edges of the Hasse diagram
$\hasse(\compact{\pc})$.

We will first solve the below steps for the case of our polyhedral complex
consisting of a single polyhedron. Afterwards we argue why this extends
seamlessly to polyhedral complexes whose recession cones form a fan.
\begin{enumerate}
\item Determine what the vertices $\vertices(\compact{\pc})$ should be. %
\item State an algorithm determining whether a subset of
$\vertices(\compact{\pc})$ forms a face.
\item Describe the closure operator on $\vertices(\compact{\pc})$ algorithmically.
\end{enumerate}

The first two tasks are mainly about rephrasing existing mathematical concepts
on $\compact{\pc}$ in combinatorial, and later in algorithmic terms. The third
task could also be solved in a brute force manner as soon as the second task is
done. In particular, for any subset of $\vertices(\compact{\pc})$ it must be checked 
whether or not 
the subset 
forms a face. However, we would like to find a solution avoiding the brute
force approach, as many of our examples are large and computationally
expensive.

\section{Hasse diagram of the compactification} %
\label{sec:face_lattice}

\subsection{Faces of the compactification of a single polyhedron}
In this section we will describe the faces of the compactification
$\compact{\poly}$ of a polyhedron $\poly$ with respect to its recession cone $\tail(\poly)$.
A node in $\hasse(\compact{\poly})$ corresponds to a face of \(\compact{\poly}\), so
we have to explain what these faces are. Looking at
\autoref{def:compactification}, we see that \(\compact{\poly}\) is made of 
polyhedra $\pi_{\sigma}(\poly)$  in the strata $N_{\RR}(\sigma)$. Notice that the set $\pi_{\sigma}(\poly)$ is 
 closed  in the stratum $N_{\RR}(\sigma)$, yet it is not closed in \(\compactifying(\Delta)\). 

For two cones $\tau\le\sigma\in\Delta=\tail(\poly)$ we get a map
\[
\pi_{\sigma,\tau}:\ \compactifying(\tau)\surj \compactifying(\sigma),
\]
such that $\pi_{\sigma}=\pi_{\sigma, \tau}\circ\pi_{\tau}$. Using this
definition, we can describe the compactification $\compact{F}$ in
$\compactifying(\Delta)$ of a face $F\le \pi_{\tau}(\poly)$ to be
\[
\compact{F}\ :=\ \Disjoint_{\sigma\in\Delta,\ \tau\le\sigma,  \ \relint(\pi_{\tau}(\sigma))\cap\tail(F)\not= \emptyset} \pi_{\sigma,\tau}(F).
\]
In particular, if $F$ is a compact face of $\pi_{\tau}(\poly)$, then
$\compact{F}=F$.

\begin{definition}[Face of $\compact{\poly}$]\label{def:compact_face}
   The \define{faces} of $\compact{\poly}$ are the compactifications $\compact{F}$ of
   any face $F\le \pi_{\tau}(\poly)$ of any $\pi_{\tau}(\poly)$.  For a face
   $\compact{F}$ of $\compact{\poly}$ that is the compactification of
   $F\le\pi_{\tau}(\poly)$ we call the cone $\trunk(\compact{F}):=\tau$ the
   \define{trunk} of $\compact{F}$. The set
   \[
   \support(\compact{F})\ :=\ \{\sigma\in\Delta\ |\ \tau\le\sigma, \relint(\pi_{\tau}(\sigma))\cap\tail(F)\not= \emptyset\}
   \]
are the \define{supporting cones} of $\compact{F}$.

Note that the trunk is the unique minimal element of $\support(\compact{F})$
and for $F$ being compact, it is the only element of $\support(\compact{F})$.
\end{definition}

\begin{example}
In \autoref{example:pos_orthant}, note that the face
\[
   F\ =\ (1,0) + \RR_{\ge 0}\cdot (1,0)\ =\
\begin{tikzpicture}[scale=.3, baseline=.2cm]
\draw[color=black!30] (-.3,-.3) grid (2.3,2.3);
\draw[thick, color=black!50] (0,2.3) -- (0,1) -- (1,0) -- (2.3,0);
\fill[pattern color=black!30, pattern=dots] (0,2.3) -- (0,1) -- (1,0) -- (2.3,0) -- (2.3,2.3) -- cycle;
\draw[color=black!30] (3,-.3) -- (3,2.3);
\draw[thick, color=black!50] (3,0) -- (3,2.3);
\draw[color=black!30] (-.3,3) -- (2.3,3);
\draw[thick, color=black!50] (0,3) -- (2.3,3);
\fill[color=black!50] (3,3) circle (2pt);
\fill[color=black!50] (1,0) circle (2pt);
\fill[color=black!50] (0,1) circle (2pt);
\fill[color=black!50] (3,0) circle (2pt);
\fill[color=black!50] (0,3) circle (2pt);
\fill (1,0) circle (6pt);
\draw[-stealth, line width=2pt] (1,0) -- (2.3,0);
\end{tikzpicture}
\]
of \(\poly=\pi_{0}(\poly)\) is not a face of $\compact{\poly}$.  However, its compactification, consisting
of $F$ and the vertex $\pi_{\RR_{\ge 0}\times\{0\}}(F)$, is a face of
$\compact{\poly}$.
\end{example}

Thus we can abbreviate the above formula for $\compact{F}$ as 
\[
\compact{F}\ :=\ \Disjoint_{\sigma\in\support(\compact{F})} \pi_{\sigma,\tau}(F),
\]
where $\tau=\trunk(\compact{F})$. In the following we will abbreviate this even
further by denoting the components $\pi_{\sigma,\tau}(F)$ as $F_{\sigma}$.

\begin{remark} \label{rem:simplified_support}
Since we are working in the case \(\Delta = \tail(\poly)\), we can reformulate the support condition. 
From \(F \le \pi_{\tau}(\poly)\), it follows that 
\(\tail(F) \le \tail(\pi_{\tau}(\poly))\) and by 
\(\tau \le \tail(\poly)\), we deduce 
\( \tail(\pi_{\tau}(\poly)) = \pi_{\tau}(\tail(\poly))\).
Clearly, for \(\tau \le \sigma \le \tail(\poly)\) also
\(\pi_{\tau}(\sigma) \le \pi_{\tau}(\tail(\poly))\). Thence we have
two faces of \(\pi_{\tau}(\tail(\poly))\) where the relative
interior of the first one intersects the second non-trivially. This
means that the first forms a face of the second.
So \[
\support(\compact{F})\ 
= \{\sigma\le\tail(\poly)\ |\ \tau\le\sigma, \pi_{\tau}(\sigma)\le\tail(F)\}.
\]
\end{remark}

\begin{example}
If $\poly$ and $\Delta$ are incompatible, \autoref{rem:simplified_support}
becomes invalid.

Consider \autoref{example:fan_incompatible}, and pick the face $F:=(0,0)$ of
$\poly$. Since $F$ is compact itself, we have $\compact{F}=F$. But if we
project along $\sigma:=\{0\}\times\RR_{\ge 0}\le\Delta$, the projection of
$\poly$ is the whole $\RR$ and the projection of $(0,0)$ cannot be a face,
since $\RR$ has no zero-dimensional faces.
\end{example}

\begin{example} 
Let us compute the $\support$ and $\trunk$ for some faces of the compactified
polyhedron in \autoref{example:pos_orthant}. Note that we will always write the
trunk as the first element in the support. 

The face $\compact{F}$ from \autoref{example:pos_orthant} has
$\trunk(\compact{F})=0$ and 
\[
   \support(\compact{F})\ =\
   \support\left(
      \begin{tikzpicture}[scale=.3, baseline=.2cm]
      \draw[color=black!30] (-.3,-.3) grid (2.3,2.3);
      \draw[thick, color=black!50] (0,2.3) -- (0,1) -- (1,0) -- (2.3,0);
      \fill[pattern color=black!30, pattern=dots] (0,2.3) -- (0,1) -- (1,0) -- (2.3,0) -- (2.3,2.3) -- cycle;
      \draw[color=black!30] (3,-.3) -- (3,2.3);
      \draw[thick, color=black!50] (3,0) -- (3,2.3);
      \draw[color=black!30] (-.3,3) -- (2.3,3);
      \draw[thick, color=black!50] (0,3) -- (2.3,3);
      \fill[color=black!50] (3,3) circle (2pt);
      \fill[color=black!50] (1,0) circle (2pt);
      \fill[color=black!50] (0,1) circle (2pt);
      \fill[color=black!50] (3,0) circle (2pt);
      \fill[color=black!50] (0,3) circle (2pt);
      \fill (3,0) circle (6pt);
      \fill (1,0) circle (6pt);
      \draw[-stealth, line width=2pt] (1,0) -- (2.3,0);
      \end{tikzpicture}
   \right)
   \ =\ 
   \{0,\ \RR_{\ge 0}\times\{0\}\}
   \ =\
   \left\{
      \begin{tikzpicture}[scale=.5, baseline=.2cm]
      \draw[color=black!30] (-.3,-.3) grid (1.3,1.3);
      \fill (0,0) circle (4pt);
      \end{tikzpicture},
      \begin{tikzpicture}[scale=.5, baseline=.2cm]
      \draw[color=black!30] (-.3,-.3) grid (1.3,1.3);
      \fill (0,0) circle (4pt);
      \draw[-stealth, line width=2pt] (0,0) -- (1.3,0);
      \end{tikzpicture}
   \right\}.
\]

Take $\compact{F'} = F' =\pi_{\RR_{\ge 0}^2}(\poly)$ in the previous example,
then
\[
   \support(\compact{F'})
   \ = \
   \support\left(
      \begin{tikzpicture}[scale=.3, baseline=.2cm]
      \draw[color=black!30] (-.3,-.3) grid (2.3,2.3);
      \draw[thick, color=black!50] (0,2.3) -- (0,1) -- (1,0) -- (2.3,0);
      \fill[pattern color=black!30, pattern=dots] (0,2.3) -- (0,1) -- (1,0) -- (2.3,0) -- (2.3,2.3) -- cycle;
      \draw[color=black!30] (3,-.3) -- (3,2.3);
      \draw[thick, color=black!50] (3,0) -- (3,2.3);
      \draw[color=black!30] (-.3,3) -- (2.3,3);
      \draw[thick, color=black!50] (0,3) -- (2.3,3);
      \fill[color=black!50] (3,3) circle (2pt);
      \fill[color=black!50] (1,0) circle (2pt);
      \fill[color=black!50] (0,1) circle (2pt);
      \fill[color=black!50] (3,0) circle (2pt);
      \fill[color=black!50] (0,3) circle (2pt);
      \fill (3,3) circle (6pt);
      \end{tikzpicture}
   \right)
   \ =\ 
   \{\RR_{\ge 0}^2\}
   \ = \
   \left\{
      \begin{tikzpicture}[scale=.5, baseline=.2cm]
      \draw[color=black!30] (-.3,-.3) grid (1.3,1.3);
      \fill[pattern color=black!50, pattern=dots] (1.3,0) -- (0,0) -- (0,1.3) -- (1.3,1.3) -- cycle;
      \fill (0,0) circle (4pt);
      \draw[-stealth, line width=2pt] (0,0) -- (1.3,0);
      \draw[-stealth, line width=2pt] (0,0) -- (0,1.3);
      \end{tikzpicture}
   \right\}.
\]

And for $\compact{F''}$ with $F''=\pi_{\RR_{\ge 0}\times\{0\}}(\poly)$, we get
\[
   \support(\compact{F''})
   \ = \
   \support\left(
      \begin{tikzpicture}[scale=.3, baseline=.2cm]
      \draw[color=black!30] (-.3,-.3) grid (2.3,2.3);
      \draw[thick, color=black!50] (0,2.3) -- (0,1) -- (1,0) -- (2.3,0);
      \fill[pattern color=black!30, pattern=dots] (0,2.3) -- (0,1) -- (1,0) -- (2.3,0) -- (2.3,2.3) -- cycle;
      \draw[color=black!30] (3,-.3) -- (3,2.3);
      \draw[thick, color=black!50] (3,0) -- (3,2.3);
      \draw[color=black!30] (-.3,3) -- (2.3,3);
      \draw[thick, color=black!50] (0,3) -- (2.3,3);
      \fill[color=black!50] (3,3) circle (2pt);
      \fill[color=black!50] (1,0) circle (2pt);
      \fill[color=black!50] (0,1) circle (2pt);
      \fill[color=black!50] (3,0) circle (2pt);
      \fill[color=black!50] (0,3) circle (2pt);
      \fill (3,3) circle (6pt);
      \fill (3,0) circle (6pt);
      \draw[line width=2pt, -stealth] (3,0) -- (3,2.3);
      \end{tikzpicture}
   \right)
   \ =\ 
   \{\RR_{\ge 0}\times\{0\},\ \RR_{\ge 0}^2\}
   \ = \
   \left\{
      \begin{tikzpicture}[scale=.5, baseline=.2cm]
      \draw[color=black!30] (-.3,-.3) grid (1.3,1.3);
      \fill (0,0) circle (4pt);
      \draw[-stealth, line width=2pt] (0,0) -- (1.3,0);
      \end{tikzpicture},
      \begin{tikzpicture}[scale=.5, baseline=.2cm]
      \draw[color=black!30] (-.3,-.3) grid (1.3,1.3);
      \fill[pattern color=black!50, pattern=dots] (1.3,0) -- (0,0) -- (0,1.3) -- (1.3,1.3) -- cycle;
      \fill (0,0) circle (4pt);
      \draw[-stealth, line width=2pt] (0,0) -- (1.3,0);
      \draw[-stealth, line width=2pt] (0,0) -- (0,1.3);
      \end{tikzpicture}
   \right\}.
\]
\end{example}

The following lemma will be crucial to show that faces in the
sense of \autoref{def:compact_face} behave as we expect from faces.
In particular, we will need it to guarantee that the faces of
\(\compact{\poly}\) form a polyhedral complex in the sense of 
\autoref{def:pc}.

\begin{lemma}\label{lemma:projection_of_face}
   Let $F\le \pi_{\tau}(\poly)$ be a face and let $\sigma\in\support(\compact{F})$.
   Then $\pi_{\sigma,\tau}(F)$ is a face of $\pi_{\sigma}(\poly)$.
\end{lemma}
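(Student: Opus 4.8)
The plan is to push forward the linear form exhibiting $F$ as a face of $\pi_\tau(\poly)$ to one exhibiting $\pi_{\sigma,\tau}(F)$ as a face of $\pi_\sigma(\poly)$. Write $\tau = \trunk(\compact{F})$, so $F \le \pi_\tau(\poly)$, and recall that every non-empty face of a polyhedron is the set of maximisers of some linear form bounded above on that polyhedron. First I would fix a linear form $\ell \colon \compactifying(\tau) \to \RR$ that is bounded above on $\pi_\tau(\poly)$, with maximum $m$, and for which $F = \{\,x \in \pi_\tau(\poly) : \ell(x) = m\,\}$; the maximum is attained since $F \neq \emptyset$.

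The next step is to check that $\ell$ factors through $\pi_{\sigma,\tau}$. Since $\tau \le \sigma$, the map $\pi_{\sigma,\tau}\colon \compactifying(\tau) \surj \compactifying(\sigma)$ is the quotient by $\vsspan(\sigma)/\vsspan(\tau) = \vsspan(\pi_\tau(\sigma))$, so $\ker(\pi_{\sigma,\tau}) = \vsspan(\pi_\tau(\sigma))$. As we are in the case $\Delta = \tail(\poly)$, \autoref{rem:simplified_support} applies to $\sigma \in \support(\compact{F})$ and gives $\pi_\tau(\sigma) \le \tail(F)$, in particular $\pi_\tau(\sigma) \subseteq \tail(F)$. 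For $x \in F$ and $v \in \tail(F)$ we have $x + tv \in F$ for all $t \ge 0$, whence $\ell(x) + t\,\ell(v) = m$ for all $t \ge 0$ and so $\ell(v) = 0$. Thus $\ell$ vanishes on $\tail(F)$, hence on its linear span, hence on $\vsspan(\pi_\tau(\sigma)) = \ker(\pi_{\sigma,\tau})$; therefore $\ell = \bar{\ell} \circ \pi_{\sigma,\tau}$ for some linear form $\bar{\ell}$ on $\compactifying(\sigma)$.

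Finally I would identify the image face. From $\pi_\sigma = \pi_{\sigma,\tau} \circ \pi_\tau$ we get $\pi_\sigma(\poly) = \pi_{\sigma,\tau}(\pi_\tau(\poly))$, which is again a polyhedron; and for $y \in \pi_\sigma(\poly)$ with any preimage $x \in \pi_\tau(\poly)$ we have $\bar{\ell}(y) = \ell(x) \le m$, so $\bar{\ell}$ is bounded above on $\pi_\sigma(\poly)$ by $m$. If in addition $\bar{\ell}(y) = m$, then $\ell(x) = m$, so $x \in F$ and $y = \pi_{\sigma,\tau}(x) \in \pi_{\sigma,\tau}(F)$; the reverse inclusion is immediate since $\bar{\ell}(\pi_{\sigma,\tau}(x)) = \ell(x) = m$ for every $x \in F$. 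Hence $\pi_{\sigma,\tau}(F) = \{\,y \in \pi_\sigma(\poly) : \bar{\ell}(y) = m\,\}$ is the face of $\pi_\sigma(\poly)$ cut out by $\bar{\ell}$.

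I expect the factorisation of $\ell$ through $\pi_{\sigma,\tau}$ to be the crux: it combines the identification $\ker(\pi_{\sigma,\tau}) = \vsspan(\pi_\tau(\sigma))$ (requiring $\tau \le \sigma$) with the inclusion $\pi_\tau(\sigma) \subseteq \tail(F)$ extracted from membership in $\support(\compact{F})$, and it uses crucially that $\Delta = \tail(\poly)$ so that \autoref{rem:simplified_support} is available (for incompatible $\Delta$ the conclusion can fail). The remaining facts used — that linear images of polyhedra are polyhedra, and that maximiser sets of bounded-above linear forms are faces — are standard.
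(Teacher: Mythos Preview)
Your argument is correct and follows essentially the same approach as the paper: choose a linear functional cutting out $F$, observe it vanishes on $\tail(F)$ and hence on $\vsspan(\pi_\tau(\sigma))=\ker(\pi_{\sigma,\tau})$ via \autoref{rem:simplified_support}, descend it to $\compactifying(\sigma)$, and check that the resulting face is $\pi_{\sigma,\tau}(F)$. The only cosmetic differences are that the paper first reduces to $\tau=0$ and uses minima rather than maxima, while you work directly and spell out the final identification more carefully.
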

\begin{proof}
   Since $\pi_{\sigma}=\pi_{\sigma, \tau}\circ\pi_{\tau}$, we may assume that
   $\tau = 0$ without loss of generality. Hence $\pi_{\tau}(\poly)=\poly$.
 
   Let $F\le \poly$ be a face and let $\sigma\in\support(\compact{F})$. 
   Thus, by \autoref{rem:simplified_support} the cone \(\sigma \le \tail(F)\).
   Since $F\le \poly$ there is a hyperplane
   $h\in \Hom(N_{\RR},\RR)$ such that
   \[
   F\ =\ \{p\in \poly\ |\ h(p) \mbox{ is minimal}\}.
   \]
   The hyperplane $h$ evaluates to a constant on $F$, hence the observation
   $\sigma\le\tail(F)$ implies $h(s)=0$ for all $s\in\sigma$. Thus, the hyperplane $h$
   is well-defined on $\compactifying(\sigma)$. This means that the set
   \[
   F'\ :=\ \{p\in\pi_{\sigma}(\poly)\ |\ h(p) \mbox{ is minimal}\}
   \]
   is a face of $\pi_{\sigma}(\poly)$. The observation $F'=\pi_{\sigma}(F)$ finishes
   the proof.
\end{proof}

\begin{example}\label{example:improper_Delta}
In \autoref{example:pos_orthant}, the projection $\pi_{\RR_{\ge 0}\times\{0\}}(F)$ is a vertex
of $\pi_{\RR_{\ge 0}\times\{0\}}(\poly)$. On the other hand, the projection
$\pi_{\{0\}\times\RR_{\ge 0}}(F)$ is not a face of $\pi_{\{0\}\times\RR_{\ge
0}}(\poly)$. In this case the support condition of the lemma is violated, and
$\{0\}\times\RR_{\ge 0}$ is not in the support of $\compact{F}$.

\end{example}

We will now use \autoref{lemma:projection_of_face} to show that faces in the
sense of \autoref{def:compact_face} form a polyhedral complex. We will start by
showing that it is compatible with taking faces.
\begin{lemma} \label{lemma:P_pc_1}
The face relation is transitive, i.e. 
if \(\compact{F'} \le\compact{F} \le \compact{\poly}\),
then \(\compact{F'} \le \compact{\poly}\).
\end{lemma}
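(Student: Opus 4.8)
The plan is to reduce transitivity to \autoref{lemma:projection_of_face} together with transitivity of the ordinary face relation on polyhedra. Set $\tau := \trunk(\compact{F})$, so that $\compact{F}\le\compact{\poly}$ unpacks to $F\le\pi_{\tau}(\poly)$; since $\pi_{\sigma}=\pi_{\sigma,\tau}\circ\pi_{\tau}$ I may pass from $\poly$ to $\pi_{\tau}(\poly)$ inside $\compactifying(\tau)$ and thereby assume $\tau=0$, i.e. $F\le\poly$. The first thing to spell out is what $\compact{F'}\le\compact{F}$ means. Here one checks that $\compact{F}$ is itself a compactification in the sense of \autoref{def:compactification}, namely the compactification of $F$ with respect to its own recession fan $\tail(F)$: for two faces $\sigma,\tail(F)$ of the pointed cone $\tail(\poly)$ one has $\relint(\sigma)\cap\tail(F)\ne\emptyset$ exactly when $\sigma\le\tail(F)$, which is precisely the reformulation underlying \autoref{rem:simplified_support}. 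Hence $\compact{F'}$ is the compactification of a face $F'\le\pi_{\rho}(F)$, where $\rho:=\trunk(\compact{F'})$ is the corresponding element of $\support(\compact{F})$; in particular $\rho\le\tail(F)\le\tail(\poly)$.

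Now \autoref{lemma:projection_of_face}, applied to $F\le\poly$ and the cone $\rho\in\support(\compact{F})$, tells us that $\pi_{\rho}(F)$ is a face of $\pi_{\rho}(\poly)$. Transitivity of the face relation for ordinary polyhedra then gives $F'\le\pi_{\rho}(F)\le\pi_{\rho}(\poly)$, so $F'\le\pi_{\rho}(\poly)$, and by \autoref{def:compact_face} this exhibits $\compact{F'}$ as a face of $\compact{\poly}$ with $\trunk(\compact{F'})=\rho$.

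The one point that genuinely needs care, and which I expect to be the only real obstacle, is verifying that the face of $\compact{\poly}$ produced in the last step is, as a subset of $\compactifying(\Delta)$, the very set $\compact{F'}$ we started with as a face of $\compact{F}$; equivalently, that $\support(\compact{F'})$ does not depend on whether it is computed inside $\compact{F}$ or inside $\compact{\poly}$. By \autoref{rem:simplified_support} these two sets are $\{\sigma\le\tail(F):\rho\le\sigma,\ \pi_{\rho}(\sigma)\le\tail(F')\}$ and $\{\sigma\le\tail(\poly):\rho\le\sigma,\ \pi_{\rho}(\sigma)\le\tail(F')\}$, so it suffices to show that every $\sigma$ satisfying the conditions in the second set already lies in $\tail(F)$. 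From $F'\le\pi_{\rho}(F)$ and $\rho\le\tail(F)$ one gets $\tail(F')\le\tail(\pi_{\rho}(F))=\pi_{\rho}(\tail(F))$, hence $\pi_{\rho}(\sigma)\le\tail(F')\le\pi_{\rho}(\tail(F))$; since $\pi_{\rho}$ restricts to a poset isomorphism from the faces of $\tail(\poly)$ containing $\rho$ onto the faces of $\pi_{\rho}(\tail(\poly))$, this forces $\sigma\le\tail(F)$. The two index sets therefore agree, the corresponding pieces $\pi_{\sigma,\rho}(F')$ agree term by term, and so the two descriptions of $\compact{F'}$ coincide, completing the proof. (Undoing the reduction $\tau=0$ only replaces $\pi_{\rho}$ by $\pi_{\rho,\tau}$ throughout.)
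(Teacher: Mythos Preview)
Your proof is correct and follows essentially the same route as the paper: reduce via the trunk, identify $\compact{F}$ with the compactification of $F$ along $\tail(F)$, apply \autoref{lemma:projection_of_face} to get $\pi_{\rho}(F)\le\pi_{\rho}(\poly)$, and then use ordinary transitivity. Your verification that the two candidate supports of $\compact{F'}$ coincide is in fact more explicit than the paper's, which dispatches this point in a single sentence.
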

\begin{proof}
By definition \(\compact{F} \le \compact{\poly}\) means that there exist
\(\tau \le \tail(\poly)\) and \(F \le \pi_{\tau}(\poly) 
\subset \compactifying(\tau) \cong \RR^{\text{codim}(\tau)}\).
When now considering \(\compact{F'} \le \compact{F}\) we want to
see \(F\) as a polyhedron in 
\(N'_{\RR} := \compactifying(\tau) \cong  \RR^{\text{codim}(\tau)}\)
and compactify with respect to the recession cone \(\tail(F)\).

Remember that 
\[
\support(\compact{F}) = \{ \sigma \le \tail(\poly) \ | \
\tau \le \sigma, \ \pi_{\tau}(\sigma) \le \tail(F)\}.
\]
The set \(\{ \pi_{\tau}(\sigma) \ | \ \sigma \in \support(\compact{F})\}\)
forms a fan, it is the fan given by \(\tail(F)\) and all its faces,
hence the fan with respect to which we compactify \(F\).

We have the face \(\compact{F'} \le \compact{F}\), thus by definition
\(F' \le \pi'_{\tau'}(F)\), where 
\(\pi'_{\tau'}: N'_{\RR} \to \compactifying'(\tau') \).
With the previous considerations \(\tau' = \pi_{\tau}(\sigma')\) 
for some \(\sigma' \in \support(\compact{F})\), and then
\(\pi'_{\tau'} = \pi_{\sigma',\tau}\).
So \(F' \le \pi_{\sigma',\tau}(F)\) and by \autoref{lemma:projection_of_face}
\(\pi_{\sigma',\tau}(F) \le \pi_{\sigma'}(\poly)\).
Thus \(F'\) also gives a face \(\compact{F'}\) of \(\compact{\poly}\).
We can see that the support of \(\compact{F'}\) in \(\compact{F}\) is the 
support of \(\compact{F'}\) in \(\compact{\poly}\) mapped with \(\pi_{\tau}\),
also the components of the compactification agree.
Thence, the compactification of \(F'\) is the same when
compactifying it as a face of \(\pi'_{\tau'}(F)\) 
or as face of \(\pi_{\sigma'}(\poly)\).
This yields the desired face relation \(\compact{F'} \le \compact{\poly}\).
\end{proof}

The following lemma shows that the intersection of two compact faces is again a
face in the sense of \autoref{def:compact_face}. 
\begin{lemma} \label{lemma:P_pc_2}
Let \(\compact{F}, \compact{F'} \le \compact{\poly}\) be two faces of \(\compact{\poly}\), 
then the intersection
\(\compact{F} \cap \compact{F'} \le \compact{\poly}\) is also a face.
\end{lemma}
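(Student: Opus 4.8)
The plan is to reduce the statement to the already-established facts about faces and their projections, stratum by stratum. First I would write $\compact{F} = \Disjoint_{\sigma \in \support(\compact{F})} F_\sigma$ and $\compact{F'} = \Disjoint_{\sigma' \in \support(\compact{F'})} F'_{\sigma'}$, where $F_\sigma = \pi_{\sigma,\tau}(F)$ and $F'_{\sigma'} = \pi_{\sigma',\tau'}(F')$, with $\tau = \trunk(\compact{F})$ and $\tau' = \trunk(\compact{F'})$. Since the components lie in disjoint strata, the intersection is computed stratum by stratum:
\[
\compact{F} \cap \compact{F'}\ =\ \Disjoint_{\sigma \in \support(\compact{F}) \cap \support(\compact{F'})} \bigl( F_\sigma \cap F'_\sigma \bigr).
\]
So the task is (a) to understand $F_\sigma \cap F'_\sigma$ inside the stratum $\compactifying(\sigma)$, and (b) to identify which cones $\sigma$ appear and match them with the support of a single face $\compact{G}$ of $\compact{\poly}$.

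For (a): fix a cone $\sigma$ in both supports. By \autoref{lemma:projection_of_face}, both $F_\sigma$ and $F'_\sigma$ are faces of $\pi_\sigma(\poly)$. Since $\pi_\sigma(\poly)$ is an honest polyhedron, the intersection $G_\sigma := F_\sigma \cap F'_\sigma$ is again a face of $\pi_\sigma(\poly)$ (as the intersection of two faces of a polyhedron). Now I want these $G_\sigma$ to be exactly the components of the compactification $\compact{G}$ of a single polyhedron $G$ living in the smallest relevant stratum. The natural candidate for the trunk is $\mu := \trunk$ of the smallest stratum in which $\compact{F} \cap \compact{F'}$ is nonempty; since $\tau$ and $\tau'$ are both minimal in their respective supports, and $\support$ is closed upward among faces of $\pi_\tau(\tail(\poly))$ (by \autoref{rem:simplified_support}), the common support $\support(\compact{F}) \cap \support(\compact{F'})$ should itself have a unique minimal element $\mu$, with $\tau \le \mu$ and $\tau' \le \mu$. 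Set $G := G_\mu = \pi_{\mu,\tau}(F) \cap \pi_{\mu,\tau'}(F')$, a face of $\pi_\mu(\poly)$. I would then check that $\support(\compact{G}) = \support(\compact{F}) \cap \support(\compact{F'})$ and that for each $\sigma$ in this common support, $\pi_{\sigma,\mu}(G) = G_\sigma = F_\sigma \cap F'_\sigma$; the inclusion $\pi_{\sigma,\mu}(G) \subseteq F_\sigma \cap F'_\sigma$ is immediate from $\pi_{\sigma,\tau} = \pi_{\sigma,\mu}\circ\pi_{\mu,\tau}$, and the reverse inclusion should follow because projecting a face of $\pi_\mu(\poly)$ to $\pi_\sigma(\poly)$ commutes with intersection when $\sigma \le \tail(G)$ — essentially the hyperplane argument in the proof of \autoref{lemma:projection_of_face}, applied simultaneously to the two defining functionals of $F_\mu$ and $F'_\mu$. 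With those two equalities, $\compact{F} \cap \compact{F'} = \compact{G}$ in the sense of \autoref{def:compact_face}, hence it is a face of $\compact{\poly}$ by \autoref{lemma:P_pc_1} (transitivity lets us compactify $G \le \pi_\mu(\poly)$ directly).

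The main obstacle I expect is the bookkeeping of the support conditions: proving that $\support(\compact{F}) \cap \support(\compact{F'})$ really is the support of the single face $\compact{G}$ with trunk $\mu$, and in particular that it has a unique minimal element. This requires knowing that $\tail(G) = \pi_\mu(\tail(F)) \cap \pi_\mu(\tail(F'))$ in the stratum $\compactifying(\mu)$ and then re-deriving the "$\pi_\mu(\sigma) \le \tail(G)$" characterisation from \autoref{rem:simplified_support}; one has to be a little careful that this computation is valid (it uses $\Delta = \tail(\poly)$, as flagged in the remark). A secondary subtlety is the degenerate case where $\compact{F}$ and $\compact{F'}$ meet only at infinity, i.e.\ where $F \cap F' = \emptyset$ in the open stratum but $F_\sigma \cap F'_\sigma \ne \emptyset$ for some positive-dimensional $\sigma$; the argument above still produces the correct $\mu$ and $G$, but it is worth spelling out an example to reassure the reader that "intersection" here genuinely means intersection in $\compactifying(\Delta)$ and can be strictly larger than the closure of $F \cap F'$.
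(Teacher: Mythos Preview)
Your proposal is correct and follows essentially the same route as the paper: compute the intersection stratum by stratum, use \autoref{lemma:projection_of_face} to see that each $G_\sigma$ is a face of $\pi_\sigma(\poly)$, find the unique minimal cone $\mu$ in $\support(\compact{F})\cap\support(\compact{F'})$, set $G:=G_\mu$, and verify $\compact{G}=\compact{F}\cap\compact{F'}$ via the concatenation law $\pi_{\sigma,\mu}\circ\pi_{\mu,\tau}=\pi_{\sigma,\tau}$. The only point where the paper is more concrete than your sketch is exactly the obstacle you flagged: uniqueness of the minimal element is established directly by showing that if $\sigma_1,\sigma_2$ both lie in the common support then so does $\sigma_1\cap\sigma_2$ (since $\tau,\tau'\le\sigma_1\cap\sigma_2$ and $\pi_\tau(\sigma_1\cap\sigma_2)=\pi_\tau(\sigma_1)\cap\pi_\tau(\sigma_2)\le\tail(F)$, similarly for $\tau'$), so no separate computation of $\tail(G)$ is needed beforehand.
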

\begin{proof}
We have \(F \le \pi_{\tau}(\poly)\) and \(F' \le \pi_{\tau'}(\poly)\).
Let us look at the intersection in a stratum \(\compactifying(\sigma)\):

\[
\begin{array}{rl}
\compact{F} \cap \compact{F'} \cap \compactifying(\sigma)
&= (\compact{F} \cap \compactifying(\sigma))
  \cap (\compact{F'} \cap \compactifying(\sigma))\\
&= \begin{cases}
 \pi_{\sigma,\tau}(F) \cap \pi_{\sigma,\tau'}(F') & 
 \sigma \in \support(\compact{F}) \cap \support(\compact{F'})\\
 \emptyset & \mbox{ else.}\\
 \end{cases}
 \end{array}
 \]

By \autoref{lemma:projection_of_face} we have the face relations
\(\pi_{\sigma,\tau}(F) \le \pi_{\sigma}(\poly)\)
and \(\pi_{\sigma,\tau'}(F') \le \pi_{\sigma}(\poly)\) and thus the intersection 
\[G_{\sigma}:=\pi_{\sigma,\tau}(F) \cap \pi_{\sigma,\tau'}(F') \le \pi_{\sigma}(\poly)\]
is a face of the polyhedron \(\pi_{\sigma}(\poly)\).

Our approach is to show that these \(G_\sigma\) form the components of a face of
\(\compact{\poly}\). First we construct a candidate for the support, in order
to find the \(\trunk:=\sigmamin\), from this we show that the \(G_\sigma\) form the
compactification of \(G_\sigmamin\le\pi_\sigmamin(\poly)\).

If \(\compact{F} \cap \compact{F'}\) is a face, its support should be
\[
\begin{array}{rl}
\support(\compact{F} \cap \compact{F'})
&=\support(\compact{F}) \cap \support(\compact{F'})\\ 
&=\left\{ \sigma \in \Delta \ \mid 
\begin{array}{l}
\tau \le \sigma,\ \pi_{\tau}(\sigma) \le \tail(F), \\
\tau' \le \sigma,\ \pi_{\tau'}(\sigma) \le \tail(F')
 \end{array} \right\}\\
\end{array}
 \]

If this set is non-empty, it contains a unique minimal element
\(\sigmamin\). Otherwise suppose that \(\sigma_1\not=\sigma_2\) are
both minimal elements of the set. Then both contain \(\tau\) as a
face, thus their intersection \(\sigma_1\cap\sigma_2\) does so,
too.  Now for the other condition, it holds for \(i =1,2\) that
\(\pi_{\tau}(\sigma_i) \le \tail(F)\). 
Hence, the intersection
\(\pi_{\tau}(\sigma_1)\cap\pi_{\tau}(\sigma_2) = \pi_{\tau}(\sigma_1 \cap \sigma_2)\) 
must be a face of \(\tail(F)\) as well.
The same applies
to \(\tau'\).  Thus \(\sigma_1 \cap \sigma_2\) is an element of the
set and strictly included in \(\sigma_i\), contradicting our
assumption.

Now set \(G := G_{\sigmamin} \le \pi_{\sigmamin}(\poly)\).
Then by definition \(\compact{G} \le \compact{\poly}\).
It remains to show that \(\compact{G} = \compact{F} \cap \compact{F'}\).
The concatenation law \(\pi_\sigma=\pi_{\sigma,\tau}\circ\pi_{\tau}\) extends
to \(\pi_{\sigma,\sigmamin}\circ\pi_{\sigmamin,\tau}=\pi_{\sigma,\tau}\) if
\(\tau\) is a face of \(\sigmamin\). One uses this to verify the equality
\(\compact{G} = \compact{F} \cap \compact{F'}\) on the non-trivial strata, i.e. those of
\(\support(\compact{F}) \cap \support(\compact{F'})\).
\end{proof}

\begin{remark}
In the proof of \autoref{lemma:P_pc_2} we use \(\Delta = \tail(\poly)\).
Otherwise the intersection \(\pi_{\tau}(\sigma_1 \cap \sigma_2)\) might not 
be a face of \(\tail(F)\).
\end{remark}

These lemmata ensure that by compactifying with respect to the recession cone,
we obtain a polyhedral complex in the sense of \autoref{def:pc}.

\begin{prop} \label{lemma:P_pc}
The compactification \(\compact{\poly}\) of a polyhedron \(\poly\)
inside the tropical toric variety of its recession cone 
\(\compactifying(\tail(\poly))\) is a polyhedral complex in
the sense of \autoref{def:pc}.
\end{prop}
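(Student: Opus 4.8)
The plan is to verify, one by one, the three conditions in \autoref{def:pc} for the finite collection
\[
\pc\ :=\ \{\,\compact{F}\ \mid\ F\le\pi_\tau(\poly)\ \text{for some face}\ \tau\le\tail(\poly)\,\}
\]
of faces of \(\compact{\poly}\); finiteness is immediate since \(\tail(\poly)\) has finitely many faces and each polyhedron \(\pi_\tau(\poly)\) has finitely many faces. Concretely, I would check (i) that \(\pc\) restricts to a polyhedral complex on every stratum \(\compactifying(\rho)\), (ii) that \(\pc\) is closed under passing to faces, and (iii) that the intersection of any two members of \(\pc\) is a face of each. Conditions (ii) and (iii) are essentially the content of \autoref{lemma:P_pc_1} and \autoref{lemma:P_pc_2} respectively, so the real work is in assembling them correctly.

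For (i), fix a cone \(\rho\in\Delta=\tail(\poly)\). Unwinding \autoref{def:compact_face}, one has \(\compact{F}\cap\compactifying(\rho)=\pi_{\rho,\tau}(F)\) when \(\rho\in\support(\compact{F})\) and \(\compact{F}\cap\compactifying(\rho)=\emptyset\) otherwise, so \(\pc\cap\compactifying(\rho)\) is the set of all \(\pi_{\rho,\tau}(F)\) with \(\rho\in\support(\compact{F})\). By \autoref{lemma:projection_of_face} each such set is a face of the polyhedron \(\pi_{\rho}(\poly)\subset\compactifying(\rho)\cong\RR^{\text{codim}\,\rho}\); conversely every face \(G\le\pi_{\rho}(\poly)\) arises from the choice \(\tau=\rho\), \(F=G\), for which \(\rho\) is the trunk of \(\compact{F}\). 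Hence \(\pc\cap\compactifying(\rho)\) is exactly the face poset of the single polyhedron \(\pi_{\rho}(\poly)\), which is a polyhedral complex.

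For (ii), \autoref{lemma:P_pc_1} gives transitivity, \(\compact{F'}\le\compact{F}\le\compact{\poly}\ \Rightarrow\ \compact{F'}\le\compact{\poly}\), and its proof also shows that the compactification of \(F'\) computed inside \(\compact{F}\) coincides with the one computed inside \(\compact{\poly}\), so indeed \(\compact{F'}\in\pc\). For (iii), given \(\compact{F},\compact{F'}\in\pc\) with non-empty intersection, \autoref{lemma:P_pc_2} produces a face \(\compact{G}\le\compact{\poly}\) with \(\compact{G}=\compact{F}\cap\compact{F'}\) and \(\compact{G}\in\pc\); it remains to upgrade ``face of \(\compact{\poly}\)'' to ``face of \(\compact{F}\) and of \(\compact{F'}\)''. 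Here I would use the explicit description from the proof of \autoref{lemma:P_pc_2}: writing \(\sigmamin=\trunk(\compact{G})\), we have \(G=\pi_{\sigmamin,\tau}(F)\cap\pi_{\sigmamin,\tau'}(F')\), which is a face of the component \(\pi_{\sigmamin,\tau}(F)\) of \(\compact{F}\) (being an intersection of two faces of \(\pi_{\sigmamin}(\poly)\)), and \(\sigmamin\in\support(\compact{F})\); applying \autoref{def:compact_face} and \autoref{lemma:P_pc_1} inside \(\compact{F}\) then yields \(\compact{G}\le\compact{F}\), and symmetrically \(\compact{G}\le\compact{F'}\).

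The main obstacle is not any single estimate but the bookkeeping in step (iii): one must carefully match the notion of ``face'' used in \autoref{def:pc}, which refers stratum-by-stratum to the faces of the individual polyhedra \(\pi_\rho(\poly)\), with the global notion of ``face of \(\compact{\poly}\)'' from \autoref{def:compact_face}, and check that the two compactification procedures --- compactifying a polyhedron as a face inside the stratum \(\compactifying(\tau)\) versus inside \(\compactifying(\Delta)\) --- produce literally the same subset. This compatibility is already implicit in the proofs of \autoref{lemma:P_pc_1} and \autoref{lemma:P_pc_2}, so no substantially new argument is needed beyond organizing those facts.
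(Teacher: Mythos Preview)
Your proposal is correct and follows the same three-part structure as the paper's proof, which simply cites \autoref{lemma:P_pc_0}, \autoref{lemma:P_pc_1}, and \autoref{lemma:P_pc_2} for the three conditions of \autoref{def:pc}. In fact you are more careful than the paper in two places: you spell out why \(\pc\cap\compactifying(\rho)\) is the full face poset of the single polyhedron \(\pi_\rho(\poly)\), and you explicitly upgrade the conclusion \(\compact{G}\le\compact{\poly}\) of \autoref{lemma:P_pc_2} to \(\compact{G}\le\compact{F}\) and \(\compact{G}\le\compact{F'}\), a step the paper leaves implicit.
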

\begin{proof}
The first point is elaborated in \autoref{lemma:P_pc_0}, 
the second point is \autoref{lemma:P_pc_1} 
and the third \autoref{lemma:P_pc_2}.
\end{proof}

Faces \(\compact{F}\) of the compactification \(\compact{\poly}\)
are closely related to faces of \(\poly\), in the following sense:
\begin{lemma}\label{lemma:preimage}
Let $F\le \pi_{\sigma}(\poly)$ be a face. Then the preimage
$\pi^{-1}_{\sigma}(F)\cap \poly$ is a face of $\poly$.
\end{lemma}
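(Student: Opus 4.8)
The plan is to realise $F$ by a linear functional on the stratum $\compactifying(\sigma)$, pull that functional back along the quotient map $\pi_\sigma\colon N_{\RR}\surj\compactifying(\sigma)$, and observe that the pulled-back functional attains its minimum over $\poly$ exactly on $\pi_\sigma^{-1}(F)\cap\poly$. Recall first that $\pi_\sigma(\poly)$ is a polyhedron in $\compactifying(\sigma)$ (this is the content of \autoref{lemma:P_pc_0}, and more generally linear images of polyhedra are polyhedra). Hence we may use the standard description of faces, exactly as in the proof of \autoref{lemma:projection_of_face}: if $F$ is a nonempty face of $\pi_\sigma(\poly)$, there is an $h\in\Hom(\compactifying(\sigma),\RR)$ which is bounded below on $\pi_\sigma(\poly)$ with minimum value $c$ and such that $F=\{p\in\pi_\sigma(\poly)\mid h(p)=c\}$; the improper face $F=\pi_\sigma(\poly)$ is obtained with $h=0$, and the case $F=\emptyset$ is trivial.

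Next I would set $\bar h := h\circ\pi_\sigma\in\Hom(N_{\RR},\RR)$. For every $q\in\poly$ we have $\pi_\sigma(q)\in\pi_\sigma(\poly)$, hence $\bar h(q)=h(\pi_\sigma(q))\ge c$; and the value $c$ is attained on $\poly$, since every point of $F\subseteq\pi_\sigma(\poly)$ is $\pi_\sigma(q)$ for some $q\in\poly$ by definition of $\pi_\sigma(\poly)$. Thus $\bar h$ is bounded below on the polyhedron $\poly$ and attains its minimum $c$, so
\[
G\ :=\ \{q\in\poly\mid\bar h(q)=c\}
\]
is a (nonempty) face of $\poly$.

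It then remains to identify $G$ with $\pi_\sigma^{-1}(F)\cap\poly$, which is a short double inclusion: for $q\in\poly$ one has $q\in G$ iff $h(\pi_\sigma(q))=c$; since $\pi_\sigma(q)\in\pi_\sigma(\poly)$ and $F=h^{-1}(c)\cap\pi_\sigma(\poly)$, this holds iff $\pi_\sigma(q)\in F$, i.e.\ iff $q\in\pi_\sigma^{-1}(F)\cap\poly$. Therefore $\pi_\sigma^{-1}(F)\cap\poly=G$ is a face of $\poly$.

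The step I expect to need the most care is the boundedness of $\bar h$ from below on the possibly unbounded polyhedron $\poly$ together with the fact that its infimum is actually attained; this is precisely where it is used that $F$ is a \emph{genuine} face of $\pi_\sigma(\poly)$ — so that $h$ already has a minimum there — rather than an arbitrary subset, the naive statement ``the preimage of a subset is a face'' being false for linear projections in general position. Everything else is routine bookkeeping with the quotient map $\pi_\sigma$. As an alternative to the functional argument, one could equivalently pull back a supporting hyperplane of $\pi_\sigma(\poly)$ cutting out $F$ to a supporting hyperplane of $\poly$ and check directly that it cuts out $\pi_\sigma^{-1}(F)\cap\poly$.
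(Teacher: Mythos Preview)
Your proposal is correct and follows essentially the same approach as the paper: both choose a linear functional $h\in\Hom(\compactifying(\sigma),\RR)$ realising $F$ as the minimum locus on $\pi_\sigma(\poly)$ and then observe that $h\circ\pi_\sigma$ cuts out $\pi_\sigma^{-1}(F)\cap\poly$ as a face of $\poly$. The paper's version is terser, while you spell out the attainment of the infimum and the identification $G=\pi_\sigma^{-1}(F)\cap\poly$ in detail, but the underlying argument is identical.
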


\begin{proof}
The face $F\le\pi_{\sigma}(\poly)$ is cut out by a hyperplane $h\in
\Hom(\compactifying(\sigma),\RR)$, i.e.
\[
F\ =\ \{p\in \pi_{\sigma}(\poly)\ |\ h(p) \mbox{ is minimal}\}.
\]
The preimage $\pi^{-1}_{\sigma}(F)\cap \poly$ is cut out by the composition
$h\circ\pi_{\sigma}$.
\end{proof}

In the following lemma we will show that each face \(\compact{F}\)
of the compactification \(\compact{\poly}\) is associated to a
unique face of \(\poly\). A key ingredient is
\autoref{lemma:projection_of_face} which ensures that the stratification is
compatible with the face structure.
\begin{lemma}\label{lemma:parent_independent}\label{def:parent}
Let $\compact{F}$ be a face of $\compact{\poly}$. Let
$\tau:=\trunk(\compact{F})$. Then for any $\sigma\in\support(\compact{F})$ we
have
\[
\pi_{\sigma}^{-1}(F_{\sigma})\cap \poly\ =\ \pi_{\tau}^{-1}(F_{\tau})\cap \poly.
\]
We call the face $\pi_{\tau}^{-1}(F_{\tau})\cap \poly$ the \define{parent face}
of $\compact{F}$, and denote this as \(\parent(\compact{F})\).
\end{lemma}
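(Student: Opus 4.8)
The plan is to reduce everything to an equality of polyhedra inside the ambient vector space $N_\RR$, using the hyperplane descriptions of faces. Fix $\compact F$ a face of $\compact\poly$ with $\tau := \trunk(\compact F)$, so that $F \le \pi_\tau(\poly)$, and let $\sigma \in \support(\compact F)$, meaning $\tau \le \sigma$ and $\pi_\tau(\sigma) \le \tail(F)$. Recall that $F_\sigma = \pi_{\sigma,\tau}(F)$ and that $\pi_\sigma = \pi_{\sigma,\tau} \circ \pi_\tau$. The first step is to produce a single linear functional that cuts out both $F$ and $F_\sigma$ simultaneously: since $F \le \pi_\tau(\poly)$, there is $h \in \Hom(\compactifying(\tau),\RR)$ with $F = \{p \in \pi_\tau(\poly) \mid h(p) \text{ minimal}\}$. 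As in the proof of \autoref{lemma:projection_of_face}, $h$ is constant on $F$ and $\pi_\tau(\sigma) \le \tail(F)$ forces $h$ to vanish on $\pi_\tau(\sigma)$, hence $h$ descends to $\bar h \in \Hom(\compactifying(\sigma),\RR)$, and $F_\sigma = \{p \in \pi_\sigma(\poly) \mid \bar h(p) \text{ minimal}\}$ with the same minimal value.

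The second step is to compute both preimages as subsets of $\poly$ and observe they coincide. By \autoref{lemma:preimage} (applied with the functional in place of the generic hyperplane), $\pi_\sigma^{-1}(F_\sigma) \cap \poly$ is cut out of $\poly$ by the composite $\bar h \circ \pi_\sigma = h \circ \pi_\tau$, i.e.
\[
\pi_\sigma^{-1}(F_\sigma) \cap \poly \;=\; \{p \in \poly \mid (h \circ \pi_\tau)(p) \text{ minimal over } \poly\}.
\]
Similarly $\pi_\tau^{-1}(F_\tau) \cap \poly = \pi_\tau^{-1}(F) \cap \poly$ is cut out of $\poly$ by $h \circ \pi_\tau$. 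Since $\tau \le \sigma$, the only subtlety is that the minimal value of $h \circ \pi_\tau$ on $\poly$ should agree with the minimal value of $\bar h$ on $\pi_\sigma(\poly)$ and with the minimal value of $h$ on $\pi_\tau(\poly)$; but $\pi_\tau$ is surjective onto $\pi_\tau(\poly)$ and $\pi_{\sigma,\tau}$ is surjective onto $\pi_\sigma(\poly)$, so the infima are all equal. Hence both preimages are defined by the \emph{same} inequality on $\poly$, giving the claimed equality $\pi_\sigma^{-1}(F_\sigma)\cap\poly = \pi_\tau^{-1}(F_\tau)\cap\poly$; in particular this set is a face of $\poly$ by \autoref{lemma:preimage}, so the definition of $\parent(\compact F)$ is legitimate.

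The main obstacle I anticipate is bookkeeping around the claim that $h \circ \pi_\tau$ genuinely attains its minimum on $\poly$ (so that $\pi_\sigma^{-1}(F_\sigma)\cap\poly$ is nonempty and really is the face cut out by that functional): one must check that $\sigma \in \support(\compact F)$ is exactly what guarantees $h\circ\pi_\tau$ is bounded below on $\poly$ and that its minimizing face maps onto $F_\sigma$ under $\pi_\sigma$. This is where \autoref{lemma:projection_of_face} does the work, since it already asserts $F_\sigma = \pi_\sigma(F)$ is a face of $\pi_\sigma(\poly)$ cut out precisely by $\bar h$; combined with surjectivity of the projections, the minima match and the two preimages coincide. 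Everything else is formal manipulation of the composition law $\pi_{\sigma,\tau}\circ\pi_{\tau,\,\cdot}$, already used in \autoref{lemma:P_pc_2}.
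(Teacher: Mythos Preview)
Your proposal is correct and follows essentially the same line as the paper's proof: both pick a supporting functional $h$ for $F$ in $\pi_\tau(\poly)$, observe that the condition $\sigma\in\support(\compact F)$ forces $h$ to descend to $\compactifying(\sigma)$, and then identify both preimages with the face of $\poly$ cut out by the composite $h\circ\pi_\tau$. The only cosmetic difference is that the paper first reduces to the case $\tau=0$ (so $\pi_\tau=\mathrm{id}$) before running the argument, whereas you keep $\tau$ general throughout; your worry about the minimum being attained is handled automatically by the surjectivity you already note.
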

\begin{proof}
Assume for now that $\tau=0$. Then $\pi_{\tau}$ is just the identity and we
have to show that 
\[
F\ =\ (\pi_{\sigma}^{-1}\circ\pi_{\sigma}(F))\cap \poly
\]
for any cone $\sigma \in \support(\compact{F})$.
Just as in the proof of \autoref{lemma:projection_of_face}, the face $F$ is cut out from
$\poly$ by a hyperplane $h\in\Hom(N_{\RR},\RR)$ and because of
$\sigma\le\tail(F)$ this hyperplane is well-defined on $\compactifying(\sigma)$.
Thus, the hyperplane $h$ cuts out $\pi_{\sigma}(F)\le\pi_{\sigma}(\poly)$. Both $\pi_{\sigma}$ and
$\pi_{\sigma}^{-1}$ preserve the value of $h$, meaning that for any point $p\in
\compactifying(\sigma)$ we have $h(p)=h(q)$ for all points $q\in\pi_{\sigma}^{-1}(p)$
in the preimage. Denote by $h(F)$ the value of $h$ on $F$. Then
\[
(\pi_{\sigma}^{-1}\circ\pi_{\sigma}(F))\cap \poly\ =\ \{p\in \poly\ |\ h(p)=h(F)\}.
\]

The above argument also shows that
$\pi_{\sigma,\tau}^{-1}(F_{\sigma})\cap\pi_{\tau}(\poly)=F_{\tau}$.  Together with
the identity $\pi_{\sigma}=\pi_{\sigma,\tau}\circ\pi_{\tau}$ this finishes the
proof.
\end{proof}

\begin{remark}\label{cor:parent_equality}
Let \(\compact{F} \le \compact{\poly}\) with \(\trunk(\compact{F}) =\tau\). Then the following equality holds 
\[F = \pi_{\tau}(\parent(\compact{F})).\]
\end{remark}

\begin{example}
In \autoref{example:pos_orthant} the parent face of the vertex $\pi_{\RR_{\ge
0}\times\{0\}}((1,0)+\RR_{\ge 0}\cdot(1,0))$ is $(1,0)+\RR_{\ge 0}\cdot(1,0)$
itself.
\end{example}

\begin{remark}
With the notation of parent face, we can simplify the support condition for \(\compact{F}\), 
even further. From the statement in 
\autoref{rem:simplified_support} to
\[
\support(\compact{F})\ = \ 
\{\sigma \in \tail(\poly)\ | \ \trunk(\compact{F}) \le \sigma \le \tail(\parent(\compact{F}))\}.
\]
\end{remark}

Face relations between compact faces lift to a face relation of 
the parent faces.
\begin{lemma} \label{lemma:face_relation_to_parent}
Let $\compact{F'}\le\compact{F}$ be two faces of $\compact{\poly}$. Then the same
face relation holds for the parents, namely
\[
\parent(\compact{F'})\ \le\ \parent(\compact{F}).
\]
\end{lemma}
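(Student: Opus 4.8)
The plan is to establish the set-theoretic inclusion $\parent(\compact{F'})\subseteq\parent(\compact{F})$ inside $\poly$ and then promote it to a face relation using the standard fact that if $G'\le\poly$ and $G\le\poly$ are faces of a polyhedron with $G'\subseteq G$, then $G'\le G$ (restrict the supporting functional of $G'$ to $G$). All of the work is bookkeeping with the projections $\pi_\sigma$, the concatenation law $\pi_\sigma=\pi_{\sigma,\tau}\circ\pi_\tau$, and the statements \autoref{lemma:parent_independent} and \autoref{cor:parent_equality}.

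First I would write $\tau:=\trunk(\compact{F})$ and $\rho:=\trunk(\compact{F'})$. Since $\compact{F'}\le\compact{F}$ entails $\compact{F'}\subseteq\compact{F}$ as subsets of $\compactifying(\Delta)$, the strata met by $\compact{F'}$ are among those met by $\compact{F}$, so $\support(\compact{F'})\subseteq\support(\compact{F})$; hence $\rho\in\support(\compact{F})$ and, as every cone of $\support(\compact{F})$ contains $\tau$, also $\tau\le\rho$. From the proof of \autoref{lemma:P_pc_1}, which realizes $\compact{F'}$ first as a face of $\compact{F}$ and then as a face of $\compact{\poly}$, one moreover reads off that the $\rho$-component of $\compact{F'}$ satisfies $F'_\rho\le\pi_{\rho,\tau}(F_\tau)$, where $F_\tau$ is the component of $\compact{F}$ over its trunk. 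By \autoref{cor:parent_equality}, $F_\tau=\pi_\tau(\parent(\compact{F}))$ and $F'_\rho=\pi_\rho(\parent(\compact{F'}))$, so using $\pi_\rho=\pi_{\rho,\tau}\circ\pi_\tau$ we obtain
\[
\pi_\rho(\parent(\compact{F'}))\ =\ F'_\rho\ \subseteq\ \pi_{\rho,\tau}(F_\tau)\ =\ \pi_\rho(\parent(\compact{F})).
\]

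Next I would apply $\pi_\rho^{-1}(\,\cdot\,)\cap\poly$ to both sides; monotonicity of preimage and intersection preserves the inclusion. For the left-hand side, $\rho\in\support(\compact{F'})$ (the trunk always lies in the support), so \autoref{lemma:parent_independent} gives $\pi_\rho^{-1}(F'_\rho)\cap\poly=\parent(\compact{F'})$. For the right-hand side, the $\rho$-component of $\compact{F}$ is $F_\rho=\pi_{\rho,\tau}(F_\tau)=\pi_\rho(\parent(\compact{F}))$, and since $\rho\in\support(\compact{F})$, \autoref{lemma:parent_independent} gives $\pi_\rho^{-1}(F_\rho)\cap\poly=\parent(\compact{F})$. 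Hence $\parent(\compact{F'})\subseteq\parent(\compact{F})$, and as both are faces of $\poly$ this yields $\parent(\compact{F'})\le\parent(\compact{F})$.

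The step I expect to be the main obstacle is pinning down $F'_\rho\le\pi_{\rho,\tau}(F_\tau)$, namely that the boundary component of $\compact{F'}$ over the coarsest stratum it touches is a \emph{face} of the corresponding component of $\compact{F}$, not merely a subset. This is exactly what the construction in the proof of \autoref{lemma:P_pc_1} produces — there $\compact{F'}$ is exhibited as the compactification of a face $F'_\rho\le\pi_{\rho,\tau}(F_\tau)\le\pi_\rho(\poly)$ — so the cleanest route is to quote that construction rather than re-derive it. Everything downstream is formal, and legitimate because $\tau\le\rho$ are cones of $\Delta=\tail(\poly)$, so the concatenation identity for the projection maps applies.
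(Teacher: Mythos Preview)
Your proposal is correct and follows essentially the same route as the paper's proof: both quote the setup from \autoref{lemma:P_pc_1} to obtain $F'_\rho\le\pi_{\rho,\tau}(F_\tau)=F_\rho$ for $\rho=\trunk(\compact{F'})\in\support(\compact{F})$, then apply $\pi_\rho^{-1}(\cdot)\cap\poly$ and invoke \autoref{lemma:parent_independent} to conclude the inclusion of parents, which upgrades to a face relation since both are faces of $\poly$. Your write-up is somewhat more explicit (e.g.\ deriving $\support(\compact{F'})\subseteq\support(\compact{F})$ topologically rather than just citing \autoref{lemma:P_pc_1}), but the argument is the same.
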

\begin{proof}
As in the proof of \autoref{lemma:P_pc_1} we have
\(F \le \pi_{\tau}(\poly)\) and 
\(F' \le \pi_{\sigma,\tau}(F) \le \pi_{\sigma}(\poly)\) 
for some \(\sigma \in \support(\compact{F})\).
Now \[\parent(\compact{F'}) = \pi_{\sigma}^{-1}(F') \cap \poly \le \poly\]
and 
\[\parent(\compact{F}) = \pi_{\tau}^{-1}(F) \cap \poly =\pi_{\sigma}^{-1}(F_{\sigma}) \cap \poly  \le \poly.\]

Since \(F' \subset F_{\sigma}\), also \(\parent(\compact{F'}) \subset \parent(\compact{F})\)
and because both are faces of \(\poly\), we get the required face relation of the parent faces.
\end{proof}

\begin{definition}\label{def:compact_face_dim}
Every face of $\compact{\poly}$ with trunk \(\sigma\) canonically inherits the dimension of the
underlying face of the $\pi_{\sigma}(\poly)$, i.e. for
$F\le\pi_{\sigma}(\poly)$ we set
\[
\dim(\compact{F})\ =\ \dim(F).
\]
\end{definition}
Topologically \autoref{def:compact_face_dim} makes sense. 
The dimension of \(\compact{F}\) should be the maximal length of a chain of faces of \(\compact{F}\).

Let \(\compact{F^0} < \compact{F^1} < \dots < \compact{F^d} = \compact{F}\) be a chain of faces 
of maximal length. Then we can consider the parent faces of the \(\compact{F^i}\).
By \autoref{lemma:face_relation_to_parent} this gives a chain of faces of \(\parent(\compact{F})\).
This also works when only taking the ``partial parent" \(\pi^{-1}_{\sigma,\tau}(F^i_{\sigma}) \cap F\),
and gives a chain of faces of \(F\).
On the other hand, the compactifications of a chain of faces of \(F\) gives a chain of faces of 
\(\compact{F}\).

After equipping faces of $\compact{\poly}$ with a dimension, it makes sense to
talk about vertices, i.e. faces of dimension zero.

\begin{prop}\label{prop:compact_vertices}
The vertices of $\compact{\poly}$ are the union of all the vertices of the
$\pi_{\sigma}(\poly)$.
\end{prop}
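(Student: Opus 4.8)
The plan is to show both inclusions between the set of vertices of $\compact{\poly}$ and the union $\bigcup_{\sigma \in \Delta} \vertices(\pi_{\sigma}(\poly))$, where $\Delta = \tail(\poly)$. By \autoref{def:compact_face} together with \autoref{def:compact_face_dim}, a face $\compact{F}$ has dimension zero precisely when the underlying face $F \le \pi_{\tau}(\poly)$ (with $\tau = \trunk(\compact{F})$) has $\dim F = 0$, i.e. $F$ is a vertex of $\pi_{\tau}(\poly)$. So the content of the statement is really that \emph{every} vertex of \emph{every} $\pi_{\sigma}(\poly)$ arises as a zero-dimensional face of $\compact{\poly}$ in the sense of \autoref{def:compact_face}, and conversely.

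First I would treat the direction ``every zero-dimensional face of $\compact{\poly}$ lies in some $\vertices(\pi_{\sigma}(\poly))$''. If $\compact{F}$ is a vertex, then $\dim(\compact{F}) = \dim(F) = 0$ for its parent face's projection $F \le \pi_{\tau}(\poly)$ with $\tau = \trunk(\compact{F})$, so $F$ is literally a vertex of $\pi_{\tau}(\poly)$; this is immediate from the definitions.

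The substantive direction is the reverse: given $\sigma \in \Delta$ and a vertex $v$ of $\pi_{\sigma}(\poly)$, I must exhibit it as a face of $\compact{\poly}$. The idea is to take $F := v$, a vertex of $\pi_{\sigma}(\poly)$, and set $\tau := \sigma$ as a candidate trunk. To invoke \autoref{def:compact_face} I need $\sigma$ to actually be a valid choice of $\tau$, i.e. I need $F \le \pi_{\tau}(\poly)$ with $\tau \le \tail(\poly)$ — which holds since $\sigma \in \Delta = \tail(\poly)$ means $\sigma$ is a face of $\tail(\poly)$ — and then $\compact{F}$ is by definition a face of $\compact{\poly}$ with $\trunk(\compact{F}) = \sigma$. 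Since $F$ is a vertex, $\tail(F) = 0$, so \autoref{rem:simplified_support} gives $\support(\compact{F}) = \{\sigma\}$ and $\compact{F} = \{v\}$ is a single point; by \autoref{def:compact_face_dim}, $\dim(\compact{F}) = \dim(F) = 0$, so it is a vertex of $\compact{\poly}$.

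The main obstacle is a subtle well-definedness point: a vertex $v$ of $\pi_{\sigma}(\poly)$ could conceivably also be a vertex of $\pi_{\sigma'}(\poly)$ for a different cone $\sigma'$, or could already appear inside the relative interior of $\pi_{\sigma}(\poly)$ when viewed through yet another projection, so I should check that the ``union'' in the statement is an honest union (identifying points that coincide in $\compactifying(\Delta)$) and that distinct vertices of distinct $\pi_{\sigma}(\poly)$ stay distinct as points of $\compact{\poly}$ unless they genuinely coincide. This is handled by noting the strata $\compactifying(\sigma)$ are disjoint in $\compactifying(\Delta)$, so a vertex of $\pi_{\sigma}(\poly)$ lying in $\compactifying(\sigma)$ can only be identified with a vertex of $\pi_{\sigma'}(\poly)$ if $\sigma = \sigma'$; and two vertices in the same stratum are either equal or distinct as vertices of the same polyhedron $\pi_{\sigma}(\poly)$. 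I would also remark that the corresponding face $\compact{v}$ of $\compact{\poly}$ has parent $\parent(\compact{v}) = \pi_{\sigma}^{-1}(v) \cap \poly$ by \autoref{def:parent}, which ties it back to the face structure of $\poly$, but this is not needed for the statement itself. Hence $\vertices(\compact{\poly}) = \bigcup_{\sigma \in \Delta} \vertices(\pi_{\sigma}(\poly))$.
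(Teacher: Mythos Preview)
Your proposal is correct and follows essentially the same approach as the paper: the paper's two-sentence proof observes that vertices of the $\pi_{\sigma}(\poly)$ are already compact (your observation that $\tail(F)=0$ forces $\support(\compact{F})=\{\sigma\}$ and $\compact{F}=F$) and that compactification preserves dimension (your use of \autoref{def:compact_face_dim} for the converse inclusion). Your additional paragraph on disjointness of strata is a harmless elaboration but not needed for the argument.
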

\begin{proof}
The main point is that all the vertices of the $\pi_{\sigma}(\poly)$ are already
compact. Since compactification preserves dimension, there cannot be more
vertices.
\end{proof}

\begin{prop}\label{prop:vertices}
The vertices of $\compact{\poly}$ are in one-to-one correspondence with faces $F$
of $\poly$ such that $\dim{F}=\dim{\tail(F})$.
\end{prop}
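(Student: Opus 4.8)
The plan is to exhibit an explicit bijection between the vertices of $\compact{\poly}$ and the faces $F\le\poly$ with $\dim F=\dim\tail(F)$, using the combinatorial machinery already built up, in particular \autoref{prop:compact_vertices}, \autoref{lemma:projection_of_face}, and the parent-face correspondence of \autoref{lemma:parent_independent} together with \autoref{cor:parent_equality}. First I would recall from \autoref{prop:compact_vertices} that a vertex of $\compact{\poly}$ is precisely a vertex $v$ of some $\pi_\sigma(\poly)$, with $\sigma\in\tail(\poly)$; equivalently it is a $0$-dimensional face $\compact{F}$ of $\compact{\poly}$ with $\trunk(\compact{F})=\sigma$ and $F=\{v\}$. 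The map to be constructed sends such a vertex to its parent face $\parent(\compact{F})=\pi_\sigma^{-1}(v)\cap\poly\le\poly$.

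Next I would check that this parent face $G:=\parent(\compact{F})$ indeed satisfies $\dim G=\dim\tail(G)$. Since $\compact{F}$ is a vertex, its trunk $\sigma$ is the unique minimal element of $\support(\compact{F})$, and by \autoref{rem:simplified_support} every $\sigma'\in\support(\compact{F})$ satisfies $\sigma\le\sigma'$ and $\pi_\sigma(\sigma')\le\tail(\{v\})=\{0\}$, forcing $\sigma'=\sigma$; so $\support(\compact{F})=\{\sigma\}$, i.e. the vertex is ``as spread out as its recession cone allows''. The support of $\compact{F}$ has been reexpressed as $\{\sigma'\mid \sigma\le\sigma'\le\tail(\parent(\compact{F}))\}$ in the remark following \autoref{example:pos_orthant}; comparing this with $\support(\compact{F})=\{\sigma\}$ gives $\tail(G)=\sigma$, hence $\dim\tail(G)=\dim\sigma$. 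On the other hand $\pi_\sigma(G)=\{v\}$ by \autoref{cor:parent_equality}, and $\pi_\sigma$ restricted to $G$ has kernel exactly $\vsspan(\sigma)=\vsspan(\tail(G))$, so $\dim G=\dim\sigma$ as well. Therefore $G$ lies in the target set.

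For surjectivity, given a face $G\le\poly$ with $\dim G=\dim\tail(G)$, set $\sigma:=\tail(G)$, a cone of $\Delta=\tail(\poly)$. Then $\pi_\sigma(G)$ is a single point $v$ (since $\dim G=\dim\vsspan(\sigma)$ and $\ker\pi_\sigma|_G=\vsspan(\sigma)$), and by \autoref{lemma:projection_of_face} $v$ is a vertex of $\pi_\sigma(\poly)$; the corresponding vertex $\compact{F}$ of $\compact{\poly}$ has parent $\pi_\sigma^{-1}(v)\cap\poly$, which by \autoref{lemma:preimage} is a face of $\poly$ containing $G$ and with recession cone $\pi_\sigma^{-1}(\tail(\pi_\sigma(\poly)_v))\cap\tail(\poly)$; one checks this recession cone is $\sigma$ again, so the parent equals $G$ by the face-lattice structure of $\poly$ (two faces of $\poly$ with the same recession cone, one containing the other, both of dimension $\dim\sigma$, must coincide). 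For injectivity, if two vertices have the same parent $G$, both trunks equal $\tail(G)=\sigma$ by the computation above, and both vertices are $\pi_\sigma(G)$ by \autoref{cor:parent_equality}, hence they agree.

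The main obstacle I expect is the bookkeeping around recession cones under projection — specifically verifying that $\tail(\pi_\sigma(G))=\{0\}$ is equivalent to $\tail(G)=\sigma$ for a face $G\le\poly$, and that the parent-face operation inverts the projection on the nose at the level of recession cones. This is where one must use that $\Delta=\tail(\poly)$ (so there is no improper intersection, cf. the remark after \autoref{lemma:P_pc_2} and \autoref{example:improper_Delta}) and that $G\le\poly$ implies $\tail(G)\le\tail(\poly)$; the dimension hypothesis $\dim G=\dim\tail(G)$ is exactly what collapses $G$ to a point under $\pi_{\tail(G)}$, making the correspondence tight. Everything else is routine once the $\support(\compact{F})=\{\trunk(\compact{F})\}$ characterization of vertices is in hand.
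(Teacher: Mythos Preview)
Your proposal is correct and follows essentially the same route as the paper: both directions use $\sigma=\tail(F)$ together with \autoref{lemma:projection_of_face} to produce a vertex from a face with $\dim F=\dim\tail(F)$, and use \autoref{prop:compact_vertices}, \autoref{lemma:preimage}, and \autoref{cor:parent_equality} to recover the parent face from a vertex. The paper's version is terser and does not spell out injectivity/surjectivity separately, whereas you explicitly verify that the two maps are mutually inverse (via the $\support(\compact{F})=\{\sigma\}$ argument and the recession-cone bookkeeping); this extra care is harmless and arguably clarifies why the correspondence is a genuine bijection rather than just a pair of maps.
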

\begin{proof}
First assume we have $F\le \poly$ with $\dim{F}=\dim{\tail(F)}$. Now we choose
$\sigma=\tail(F)$. Then the projection $\pi_{\sigma}(F)$ is just a point. 
By \autoref{lemma:projection_of_face} it is a face of \(\pi_{\sigma}(\poly)\).

Conversely, assume we are given a vertex $v$ of $\compact{\poly}$. By
\autoref{prop:compact_vertices} we know it is the vertex of some
$\pi_{\sigma}(\poly)$. \autoref{lemma:preimage} gives that $F:=\parent(v)$ is a
face of $\poly$.
From \autoref{cor:parent_equality} we obtain \(\pi_{\sigma}(F) = v\).
Since \(\dim(v) =0\), it has to hold that \(\dim(F) = \dim(\tail(F))\).
\end{proof}

\begin{remark}
As in the previous proof and using \autoref{cor:parent_equality}
and \autoref{def:parent}, we can rewrite
\autoref{def:compact_face_dim} to
\[
\dim(\compact{F}) = \dim(F) = \dim(\parent(\compact{F}))-\dim(\trunk(\compact{F})).
\]
\end{remark}

\begin{example}
In \autoref{example:improper_Delta} consider the face $\RR_{\ge 0}\cdot(-1,1)$
of $\poly$. This does not give rise to a vertex.

If $\tail(\poly)$ is only refined by $\Delta$ as in
\autoref{condition:covering} of \autoref{def:compactification}, one face with $\dim{F}=\dim{\tail(F)}$ can give
rise to multiple vertices of $\compact{\poly}$ depending on how many
$\sigma\in\Delta$ there are with $\dim(\sigma)=\dim(\tail(F))$ and
$\sigma\subseteq\tail(F)$.

Take for example the following $\Delta$ in \autoref{example:pos_orthant}:
\[
\begin{array}{cc}
\Delta & \poly\mbox{ and }\compact{\poly}\\[.3cm]
\begin{tikzpicture}[scale=.7]
\draw[color=black!40] (-.3,-.3) grid (3.3,3.3);
\fill[pattern color=black!40, pattern=dots] (0,3.3) -- (0,0) -- (3.3,0) -- (3.3,3.3) -- cycle;
\draw[-stealth, thick] (0,0) -- (3.3,0);
\draw[-stealth, thick] (0,0) -- (3.3,3.3);
\draw[-stealth, thick] (0,0) -- (0,3.3);
\end{tikzpicture}\quad
&
\begin{tikzpicture}[scale=.7]
\draw[color=black!40] (-.3,-.3) grid (2.3,3.3);
\draw[color=black!40] (-.3,-.3) grid (3.3,2.3);
\draw[thick] (0,3.3) -- (0,1) -- (1,0) -- (3.3,0);
\draw[-stealth, thick] (0,1) -- (0,3.3);
\draw[-stealth, thick] (1,0) -- (3.3,0);
\fill[pattern color=black!40, pattern=dots] (0,2.3) -- (0,1) -- (1,0) -- (3.3,0) -- (3.3,2.3) -- cycle;
\fill[pattern color=black!40, pattern=dots] (0,3.3) -- (0,1) -- (1,0) -- (2.3,0) -- (2.3,3.3) -- cycle;
\draw[color=black!40] (4,-.3) -- (4,1.3);
\draw[-stealth, thick] (4,0) -- (4,1.3);
\draw[color=black!40] (-.3,4) -- (1.3,4);
\draw[-stealth, thick] (0,4) -- (1.3,4);
\draw[-stealth, thick] (3,3) -- (2.5,3.5);
\draw[-stealth, thick] (3,3) -- (3.5,2.5);
\fill (2,4) circle (2pt);
\fill (4,2) circle (2pt);
\fill (1,0) circle (2pt);
\fill (0,1) circle (2pt);
\fill (4,0) circle (2pt);
\fill (0,4) circle (2pt);
\end{tikzpicture}
\end{array}
\]
In this case the compactification has four new vertices instead of three.

One could refine the polyhedral complex, i.e. replace the polyhedron by a
polyhedral complex $\pc$ such that $\Delta=\tail(\pc)$ and
$\support(\pc)=\poly$. But in this case the compactification would end up
having five vertices. Hence, refinement does not serve as a trick to apply
our implementation for $\Delta\not=\tail(\pc)$.
\end{example}

\subsection{Compatibility of the single compactifications for polyhedral complexes}\label{sec:comp_pc}

In this section we prove that the compactification
$\compact{\pc}$ is an abstract polyhedral space. In the particular case when the polyhedral complex $\pc$ has a recession fan, we show that 
$\compact{\pc}$  forms a polyhedral complex  in the tropical toric variety $\compactifying(\tail(\pc))$. 

For the applications we have in mind, it suffices to consider polyhedral complexes
whose recession cones form a fan \(\Delta = \tail(\pc)\). 
In the sense of \autoref{def:fan_compatible}, \(\Delta\) will automatically be 
compatible and compactifying for \(\pc\).
Nevertheless, we give an example for a polyhedral complex \(\pc\) that has no 
recession fan. 

\begin{example} \label{example:no_rec_fan}
Take the following polyhedral complex in $\RR^3$:
\[
\begin{array}{ccc}
P_0 & := & (0,0,1) + \RR_{\ge 0} \cdot (1,1,0)\\
P_1 & := & \conv\{(0,0,0),(0,0,1)\}\\
P_2 & := & (0,0,0) + \RR_{\ge 0} \cdot (1,0,0) + \RR_{\ge 0} \cdot (0,1,0)
\end{array}.
\]
Here $\tail(P_0)\cap\tail(P_2)=\tail(P_0)$, but $\tail(P_0)$ is not a face of
$\tail(P_2)$, hence this polyhedral complex does not have a recession fan.
\end{example}

By \cite[Remark~3.13]{ossermanRabinoff} any polyhedral complex \(\pc\) has a compactifying 
fan which can be obtained by choosing a fan that is a refinement 
of the set of cones \(\cup_{\poly \in \pc} \tail(\poly)\).

\begin{example}
Any bounded polyhedral complex has a recession fan, consisting just of the
origin. Nevertheless, since these are already compact, they are not interesting
for our procedures.
Another trivial example for polyhedral complexes that always have a
recession fan, are those that consist of one polyhedron and all its
faces.
\end{example}

First we inspect the first condition for the compactification to form a polyhedral complex.
The following lemma extends  \cite[Corollary~3.7]{ossermanRabinoff} for a polyhedron to polyhedral complexes. 

\begin{lemma}\label{lemma:pc_in_stratum}
If $\pc$ has a recession fan $\Delta$, then the intersection of the
compactification of \(\pc\) with respect to \(\Delta\) with a
stratum \(\compact{\pc} \cap \compactifying(\rho)\) forms a
polyhedral complex in \(\RR^{\text{codim}{\rho}}\).
\end{lemma}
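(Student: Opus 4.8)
The plan is to reduce the claim for the complex $\compact{\pc}$ to the already-established facts about the compactification of a single polyhedron, using that $\Delta = \tail(\pc)$ is a fan. First I would unwind the definitions: $\compact{\pc} = \bigcup_{\poly \in \pc} \compact{\poly}$, where each $\compact{\poly}$ is computed inside $\compactifying(\Delta)$, and so
\[
\compact{\pc} \cap \compactifying(\rho) \ =\ \bigcup_{\poly \in \pc} \bigl( \compact{\poly} \cap \compactifying(\rho)\bigr).
\]
By \autoref{lemma:pc_in_stratum}'s ingredient \autoref{lemma:P_pc_0} (i.e.\ \cite[Corollary~3.7]{ossermanRabinoff}), each nonempty summand equals $\pi_{\rho}(\poly)$, which is a polyhedron in $\compactifying(\rho) \cong \RR^{\text{codim}\,\rho}$; it is nonempty precisely when $\rho$ is a face of $\tail(\poly)$ (equivalently $\relint(\rho) \cap \tail(\poly) \neq \emptyset$, using that $\Delta$ is a fan so $\rho$ meets $\tail(\poly)$ properly). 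So the candidate complex is the finite collection $\{\, \pi_{\rho}(\poly) : \poly \in \pc,\ \rho \le \tail(\poly)\,\}$ together with all its faces, and I must check the two axioms of \autoref{def:pc}: closure under taking faces, and that pairwise intersections are common faces.

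For closure under faces: a face of $\pi_{\rho}(\poly)$ in the stratum $\compactifying(\rho)$ is, by \autoref{lemma:preimage}, of the form $\pi_{\rho}(Q)$ where $Q = \pi_{\rho}^{-1}(F)\cap\poly \le \poly$, and since $\pc$ is a polyhedral complex, $Q \in \pc$; moreover $\rho \le \tail(\poly)$ forces $\rho \le \tail(Q)$ only after replacing $Q$ by the correct face — more cleanly, any face of $\pi_\rho(\poly)$ is $\pi_\rho$ of a face of $\poly$ whose recession cone contains $\rho$, and such a face lies in $\pc$. Hence the collection is closed under faces within the stratum. For the intersection axiom: given $\poly, \poly' \in \pc$ with $\rho \le \tail(\poly)$ and $\rho \le \tail(\poly')$, I must show $\pi_{\rho}(\poly)\cap\pi_{\rho}(\poly')$ is a common face of both. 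The key point is that $Q := \poly \cap \poly' \in \pc$ is a common face of $\poly$ and $\poly'$, and $\pi_\rho$ restricted to (the affine hull translate cut out in) these polyhedra behaves well: I would argue $\pi_{\rho}(\poly)\cap\pi_{\rho}(\poly') = \pi_{\rho}(Q)$ using that the linear functional cutting out $Q$ as a face of $\poly$ vanishes on $\rho \subseteq \tail(Q)$ (when $Q \neq \emptyset$; if $Q = \emptyset$ the two projections are disjoint, but this needs the fan hypothesis), so it descends to $\compactifying(\rho)$ and exhibits $\pi_\rho(Q)$ as a face of $\pi_\rho(\poly)$ — this is exactly the mechanism of \autoref{lemma:projection_of_face} and \autoref{lemma:P_pc_2}.

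The main obstacle I anticipate is handling the case $\poly \cap \poly' = \emptyset$ while $\pi_\rho(\poly) \cap \pi_\rho(\poly')$ is nonempty: a priori two disjoint polyhedra can have overlapping images under the projection $\pi_\rho$. Ruling this out is precisely where the hypothesis that $\Delta = \tail(\pc)$ is a \emph{fan} is essential (compare the \autoref{lemma:P_pc_2} remark and \autoref{example:fan_incompatible}). I would argue that if $x \in \poly$, $y \in \poly'$ with $\pi_\rho(x) = \pi_\rho(y)$, then $x - y \in \vsspan(\rho)$; since $\rho$ is a common face of $\tail(\poly)$ and $\tail(\poly')$, moving along $\pm\rho$-directions stays (eventually) in $\poly$ and in $\poly'$ respectively, and one can use the recession-cone structure together with properness of all intersections $\tail(R)\cap\tail(R')$ to produce an actual common point, or more carefully to show the projected intersection is $\pi_\rho$ of the intersection of suitable faces, which lies in $\pc$. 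Once this properness issue is settled, verifying the two axioms is routine bookkeeping built on \autoref{lemma:P_pc} applied face-by-face, and the conclusion that $\compact{\pc}\cap\compactifying(\rho)$ is a polyhedral complex in $\RR^{\text{codim}\,\rho}$ follows.
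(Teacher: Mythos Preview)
Your approach is essentially the paper's: identify $\compact{\pc}\cap\compactifying(\rho)$ with $\{\pi_\rho(\poly):\poly\in\pc,\ \rho\le\tail(\poly)\}$, check closure under faces via \autoref{lemma:preimage}, and check the intersection axiom via $\pi_\rho(\poly)\cap\pi_\rho(\poly')=\pi_\rho(\poly\cap\poly')$. The paper simply asserts this last equality without comment, so your caution about the $\poly\cap\poly'=\emptyset$ case is more scrupulous than the original.

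That said, you are slightly misdiagnosing where the fan hypothesis enters. The equality $\pi_\rho(\poly)\cap\pi_\rho(\poly')=\pi_\rho(\poly\cap\poly')$ needs only that $\rho\subseteq\tail(\poly)$ and $\rho\subseteq\tail(\poly')$, not properness of recession-cone intersections: if $\pi_\rho(x)=\pi_\rho(y)$ with $x\in\poly$, $y\in\poly'$, write $x-y=v_+-v_-$ with $v_\pm\in\rho$; then $x+v_-=y+v_+$ lies in $\poly\cap\poly'$ (since $v_-\in\tail(\poly)$, $v_+\in\tail(\poly')$) and projects to $\pi_\rho(x)$. This disposes of the empty-intersection worry directly. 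The recession-\emph{fan} hypothesis is used earlier, to convert the support condition $\relint(\rho)\cap\tail(\poly)\neq\emptyset$ into the cleaner $\rho\le\tail(\poly)$, exactly as you note in your first paragraph. With that clarification your plan goes through and matches the paper.
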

\begin{proof}
The intersection with the stratum \(\compactifying(\rho)\) consists of the following polyhedra
\[
\begin{array}{rl}
\compact{\pc} \cap \compactifying(\rho) 
&= \{\compact{\poly} \cap \compactifying(\rho) \ | \ \poly \in \pc\}\\
&= \{\pi_{\rho}(\poly) \ | \ \poly \in \pc, \rho \in \support(\compact{\poly})\}
\end{array}.
\]
A cone \(\rho\) is contained in \(\support(\compact{\poly})\) if and only if \(\relint(\rho) \cap \tail(\poly) \not= \emptyset\).
Since \(\Delta=\tail(\pc)\) is the recession fan and \(\rho, \tail(\poly) \in \Delta\) this condition is equivalent to
\(\rho \le \tail(\poly)\), thus
\[
\begin{array}{rl}
\compact{\pc} \cap \compactifying(\rho) &= 
\{\pi_{\rho}(\poly) \ | \ \poly \in \pc, \rho \le \tail(\poly)\}.\\
\end{array}
\]
First we show that the first axiom for being a polyhedral complex
holds for this collection of polyhedra, namely that if \(F \le
\pi_{\rho}(\poly)\), then \(F\) comes from an element of \(\pc\).
This element is \(\parent(\compact{F})\), which is a member of \(\pc\) by
\autoref{lemma:preimage}, and  with \autoref{cor:parent_equality} 
\(F = \pi_{\rho}(\parent(\compact{F}))\). 

For the second axiom of a polyhedral complex, let \(\pi_{\rho}(\poly),\pi_{\rho}(\poly') \in \compact{\pc} \cap \compactifying(\rho)\).
We want to show that the intersection 
\(\pi_{\rho}(\poly) \cap \pi_{\rho}(\poly') \in \compact{\pc} \cap \compactifying(\rho)\).
But \(\pi_{\rho}(\poly) \cap \pi_{\rho}(\poly') = \pi_{\rho}(\poly \cap \poly')\).
Since \(\pc\) is a polyhedral complex \(\poly \cap \poly' \in \pc\) and 
by \(\rho \le \tail(\poly), \tail(\poly')\), also \(\rho \le \tail(\poly \cap \poly')\).
Thence, the intersection \(\pi_{\rho}(\poly) \cap \pi_{\rho}(\poly') \in \compact{\pc} \cap \compactifying(\rho)\).
\end{proof}

In contrast to \cite[Lemma~3.10/Proposition~3.12]{ossermanRabinoff}, which
analyses  the support of \(\compact{\pc}\) in \(\compactifying(\Delta)\), the
following theorem captures the combinatorial structure of a polyhedral complex
on \(\compact{\pc}\).

\begin{theorem}\label{prop:pc}
If $\pc$ has a recession fan $\Delta$, then the compactification
$\compact{\pc}$ in \(\compactifying(\Delta)\) forms a polyhedral complex.
\end{theorem}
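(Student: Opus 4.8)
The plan is to verify directly that $\compact{\pc}$ satisfies the three defining properties of a polyhedral complex in a tropical toric variety from \autoref{def:pc}, reducing each to statements already established for single compactified polyhedra. The set of cells will be $\{\compact{F} \mid F \le \poly,\ \poly \in \pc\}$, i.e. all compactified faces of all members of $\pc$; note that $\compact{F}$ depends only on $F$ and its recession cone, not on the ambient $\poly$, since we always compactify with respect to $\Delta = \tail(\pc)$ and $\tail(F) \le \tail(\poly) \in \Delta$, so this set is well-defined.

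First I would dispatch the stratum condition: for every cone $\rho \in \Delta$, the intersection $\compact{\pc} \cap \compactifying(\rho)$ must be a polyhedral complex in $\RR^{\operatorname{codim}\rho}$ --- this is exactly \autoref{lemma:pc_in_stratum}. Next, the face-closure axiom (1) of \autoref{def:pc}: if $Q$ is a face of some cell $\compact{F} \in \compact{\pc}$, then $Q \in \compact{\pc}$. Since $\compact{F}$ is itself the compactification of a polyhedron (a face $F \le \pi_\tau(\poly)$, viewed inside $\compactifying(\tau)$), \autoref{lemma:P_pc_1} shows its faces are again compactifications $\compact{F'}$ of faces $F' \le \pi_{\sigma,\tau}(F) \le \pi_\sigma(\poly)$ for $\sigma \in \support(\compact{F})$; by \autoref{lemma:preimage} the parent $\parent(\compact{F'}) = \pi_\sigma^{-1}(F') \cap \poly$ is a genuine face of $\poly \in \pc$, hence lies in $\pc$, and $\compact{F'}$ is its compactification, so $\compact{F'} \in \compact{\pc}$.

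For the intersection axiom (2), take two cells $\compact{F}, \compact{F'} \in \compact{\pc}$ coming from $F \le \poly$ and $F' \le \poly'$ with $\poly, \poly' \in \pc$. Working stratum by stratum as in the proof of \autoref{lemma:P_pc_2}, in $\compactifying(\sigma)$ the intersection is $\pi_{\sigma,\tau}(F) \cap \pi_{\sigma,\tau'}(F')$ when $\sigma \in \support(\compact{F}) \cap \support(\compact{F'})$ and empty otherwise. By \autoref{lemma:projection_of_face}, $\pi_{\sigma,\tau}(F)$ is a face of $\pi_\sigma(\poly)$ and $\pi_{\sigma,\tau'}(F')$ is a face of $\pi_\sigma(\poly')$; since $\pc$ has a recession fan, $\pi_\sigma(\poly), \pi_\sigma(\poly') \in \compact{\pc} \cap \compactifying(\sigma)$, which is a polyhedral complex by \autoref{lemma:pc_in_stratum}, so their intersection is a common face of both and equals $\pi_\sigma(\poly \cap \poly')$ with $\poly \cap \poly' \in \pc$. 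Then I would assemble these $G_\sigma$ exactly as in \autoref{lemma:P_pc_2}: the candidate support $\support(\compact{F}) \cap \support(\compact{F'})$ has a unique minimal element $\sigmamin$ (the uniqueness argument is identical, using that $\Delta$ is a fan so $\pi_\tau(\sigma_1 \cap \sigma_2) = \pi_\tau(\sigma_1) \cap \pi_\tau(\sigma_2)$ is a face of $\tail(F)$), and setting $G := G_{\sigmamin} \le \pi_{\sigmamin}(\poly \cap \poly')$ one checks via the concatenation law $\pi_{\sigma,\sigmamin} \circ \pi_{\sigmamin,\tau} = \pi_{\sigma,\tau}$ that $\compact{G} = \compact{F} \cap \compact{F'}$; finally $\compact{G} \in \compact{\pc}$ because $G \le \pi_{\sigmamin}(\poly \cap \poly')$ and $\poly \cap \poly' \in \pc$.

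The main obstacle is the second axiom: one must be careful that the two cells may come from \emph{different} polyhedra $\poly, \poly'$ of $\pc$, whereas \autoref{lemma:P_pc_2} was stated for faces of a single $\compact{\poly}$. The resolution is precisely that $\pc$ has a recession fan, so $\rho \le \tail(\poly)$ and $\rho \le \tail(\poly')$ force $\rho \le \tail(\poly \cap \poly')$, letting us run the single-polyhedron argument with $\poly \cap \poly' \in \pc$ playing the role of the ambient polyhedron; this is the content of the remark following \autoref{lemma:P_pc_2} and of \autoref{lemma:pc_in_stratum}. Everything else is bookkeeping assembled from the lemmas already proved.
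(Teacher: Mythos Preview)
Your proposal is correct and follows essentially the same three-part verification as the paper's proof: \autoref{lemma:pc_in_stratum} for the stratum condition, \autoref{lemma:P_pc_1} for face-closure, and the argument of \autoref{lemma:P_pc_2} extended to two different ambient polyhedra for the intersection axiom. The only cosmetic difference is that the paper streamlines the intersection step by first replacing $\poly$ with $\parent(\compact{F})$ (so that $F=\pi_\tau(\poly)$ and the support becomes the interval $\{\sigma:\tau\le\sigma\le\tail(\poly)\}$), whereas you keep general $F$ and invoke \autoref{lemma:pc_in_stratum} stratum-wise; both reach $\compact{F}\cap\compact{F'}=\compact{\pi_{\sigmamin}(\poly\cap\poly')}$ the same way. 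One small imprecision: your declared set of cells $\{\compact{F}\mid F\le\poly,\ \poly\in\pc\}$ literally only contains trunk-$0$ compactifications, but your subsequent arguments (correctly) treat arbitrary faces $\compact{F}$ with $F\le\pi_\tau(\poly)$, so the intended cell set is $\{\compact{\pi_\tau(G)}\mid G\in\pc,\ \tau\le\tail(G)\}$.
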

\begin{proof}
For \(\compact{\pc}\) to form a polyhedral complex, we have to check the three points from \autoref{def:pc}.

\begin{asparaenum}
\item This is \autoref{lemma:pc_in_stratum}.
\item The proof of \autoref{lemma:P_pc_1} can be used here. 
The fact that we glue does not affect this condition.

\item Let \(Q=\compact{F} \le \compact{\poly}\) and  
\(Q'=\compact{F'} \le \compact{\poly'}\). 
If \(\poly = \poly'\), then \(Q \cap Q' \in \compact{\poly} \subset
\compact{\pc}\) by \autoref{lemma:P_pc_2}.
Let us investigate the case \(\poly \neq \poly'\). 
Without loss of generality \(F=\pi_{\tau}(\poly)\) and \(F'=\pi_{\tau'}(\poly')\). 
(Just choose \(\poly = \parent(\compact{F})\).)
And 
\[\support(\compact{F})
 =\support(\compact{\pi_{\tau}(\poly)})
= \{\sigma \in \tail(\pc) \ | \ \tau \le \sigma \le \tail(\poly)\}.\]
Thence, 
\[
\begin{array}{rl}
\support(\compact{F}) \cap \support(\compact{F'}) &=
 \left\{ \sigma \in \tail(\pc) \ | \ 
\begin{array}{l}
\tau \le \sigma \le \tail(\poly)\\ 
\tau' \le \sigma \le \tail(\poly')\\ 
\end{array}
\right\}\\
& = \left\{ \sigma \in \tail(\pc) \ | \ 
\langle \tau + \tau' \rangle \le \sigma \le \tail(\poly \cap \poly')
\right\}
\end{array}
\]
As already elaborated in the proof of \autoref{lemma:P_pc_2}
this set has a minimal element \(t\).  

The intersection of \(\compact{F} \cap \compact{F'}\)  with a
stratum \(\compactifying(\sigma)\) for 
\(\sigma \in \support(\compact{F}) \cap \support(\compact{F'})\) is
\[ 
\begin{array} {rl}
\compact{F} \cap \compact{F'} 
\cap \compactifying(\sigma) &=
\pi_{\sigma,\tau}(F) \cap \pi_{\sigma, \tau'}(F)\\ 
&= \pi_{\sigma}(\poly) \cap \pi_{\sigma}(\poly')\\ 
&= \pi_{\sigma}(\poly \cap \poly') 
\end{array},
\]
otherwise it is empty.
Then 
\(\compact{F} \cap \compact{F'} = \compact{\pi_t(\poly \cap \poly')}\). 
This is a face of \(\compact{\poly \cap \poly'} \le \compact{\poly}\) and by
the previous point in \(\compact{\pc}\).
It is also a face of \(\compact{\poly'}\). Since we glue along faces,
we have exactly one element in \(\compact{\pc}\).
\end{asparaenum}
\end{proof}

\begin{example}\label{example:hypersurface}
A tropical hypersurface $X_f$ in $\mathbb{R}^{n}$ is defined by a tropical
polynomial $f$ which is a convex piecewise integral affine function. The Newton
polytope $NP_f$ is the support of the hypersurface $X_f$. The hypersurface
is also equipped with weights on its top dimensional faces \cite{tropIntro}.
The collection of recession cones of the faces of $X_f$ is a fan $\Delta_f$. In
fact, it  is the codimension one skeleton of the dual fan of the Newton
polytope of $f$. 
 
 The compactification of $X_f$ in $N_{\mathbb{R}}(\Delta_f)$ is  a stratified
 space. The strata are in correspondence with the faces $\{F\}$ of $NP(f)$ and
 each stratum   is a tropical hypersurface coming from the restriction $f|_{F}$
 of the tropical polynomial to the monomials corresponding to lattice points
 contained in $F$. 
\end{example}

\begin{example} \label{example:matroid}
If $M$ is a matroid on the ground set $\{0, \dots, n\}$  of rank $d+1$, the
matroidal fan $\Sigma(M)$ of $M$ is a simplicial fan  in
$\mathbb{R}^{n+1}/\langle (1, \dots, 1)\rangle$ which is isomorphic to the cone
over the order complex  of the lattice of flats of $M$ \cite{ArdilaKlivans}.
The fan $\Sigma(M)$ also defines a tropical toric variety $TV_\mathbb{T}({\Sigma(M)}).$
Compactifying the fan face by face or taking its closure in
$TV_\mathbb{T}({\Sigma(M)})$ yield the same complex $\overline{\Sigma(M)}$.
Since the fan is simplicial, the faces of the compactification are all cubes.
In general, if the fan is simplicial, one obtains the so called \define{canonical
compactification} of \cite[1.4]{amini_cubical}.
The tropical homology \cite{ims:tropicalBook} of the  fan $\overline{\Sigma(M)}$ is isomorphic to the
Chow ring of a matroid of \cite{FeichtnerSturmfels} by \cite{amini_cubical}.

There are other simplicial fan structures on the set $\text{Supp}(\Sigma(M))$ coming from building sets \cite{FeichtnerSturmfels}. Distinct fan structures give distinct compactifications and the tropical homology of these compactifications are also distinct. However for  a fan structure coming from a building set $\mathcal{G}$ we have $ H^q(\Sigma(M, \mathcal{G}); \mathcal{F}^p) = 0 $ if $p \neq q $ and 
$A^{2k}(M, \mathcal{G}) \cong H^k(\Sigma(M, \mathcal{G}); \mathcal{F}^k)$ otherwise. 

Tropical linear spaces  are polyhedral complexes in Euclidean space coming from valuated matroids \cite{Speyer}. 
The recession fan of a tropical linear space is supported on the fan of the underlying matroid of the valuated matroid. Therefore, for a suitable fan structure $\Sigma(M)$, the compactification of a tropical linear space
is a polyhedral complex in $TV_\mathbb{T}({\Sigma(M)})$.

\end{example}

If $\pc$ does not have a recession fan, we can describe a compactification
$\compact{\pc}$ by compactifying every polyhedron in its recession cone. The
resulting object does not live in a tropical toric variety,  nevertheless it is a polyhedral space.

\begin{example} \label{example:no_tail_fan:refinement}
Let \(\pc\) be the polyhedral complex from \autoref{example:no_rec_fan}.
It does not have a recession fan, but the following \(\Delta\) is a compactifying fan for \(\pc\).
The fan is given by  
\(\Delta = \{0,\ \rho_0,\ \rho_1,\ \rho_2, \sigma_1,\ \sigma_2\}\) where
\(\rho_0 = \RR_{\ge 0} (1,\ 0,\ 0)\), \(\rho_1 = \RR_{\ge 0}(1,\ 1,\ 0)\), 
\(\rho_2=\RR_{\ge 0}(0,\ 1,\ 0)\), \(\sigma_1 = \RR_{\ge 0} \rho_0 + \RR_{\ge 0} \rho_1\)
and \(\sigma_2 = \RR_{\ge 0} \rho_1 + \RR_{\ge 0} \rho_2\).

Then \(\support(\poly_0) = \{0, \rho_1\}\), \(\support(\poly_1) = \{0\}\) 
and \(\support(\poly_2) = \Delta\).
For the lower-dimensional faces of \(\poly_i\) there are only two interesting ones:
\(\rho_0 \le \poly_2\) with support \(\{0,\rho_2\}\) and \(\rho_2 \le \poly_2\)
with support \(\{0,\rho_2\}\).

Consider the intersection with the stratum \(\compactifying(\rho_1)\):
\[\compact{\pc} \cap \compactifying(\rho_1) = \{\pi_{\rho_1}(\poly_0), \pi_{\rho_1}(\poly_2)\}\]
The projection \(\pi_{\rho_1}(\poly_0) = (0,\ 1)\) is just a point,
the projection \(\pi_{\rho_1}(\poly_2) = \RR (1,0)\) is a line.
These two polyhedra form a non-connected polyhedral complex.

If we subdivided \(\pc\) into \(\pc'\) by subdividing \(\poly_2\) such that \(\tail(\pc') = \Delta\),
we also obtain a non-connected polyhedral complex in this stratum, but the 
line will be subdivided into two cones with common vertex.
\end{example}

\begin{example}\label{example:no_tail_fan:abstract}
Instead of refining the fan, we could also apply our algorithm directly to the
individual polyhedra of \autoref{example:no_rec_fan}. We visualise this with the
following two pictures, where the right hand picture contains the new faces of
the compactification. Coordinates at arrow tips indicate their direction.
\[
\begin{tikzpicture}[x={(1cm,0cm)},y={(-.2cm,-.3cm)},z={(0cm,1cm)}, scale=2]
\fill[pattern color=black!60, pattern=dots] (0,0,0) -- (1,0,0) -- (1,1,0) -- (0,1,0) -- cycle;
\draw[thick, -stealth] (0,0,0) -- (1.3,0,0) node[right] {$(1,0,0)$};
\draw[thick, -stealth] (0,0,0) -- (0,1.3,0) node[below] {$(0,1,0)$};
\draw[thick] (0,0,0) node[left]{$(0,0,0)$} -- (0,0,1) node[left] {$(0,0,1)$};
\draw[thick, -stealth] (0,0,1) -- (1.3,1.3,1) node[below right]{$(1,1,0)$};
\end{tikzpicture}
\quad
\begin{tikzpicture}[x={(1cm,0cm)},y={(-.2cm,-.3cm)},z={(0cm,1cm)}, scale=2]
\fill[pattern color=black!60, pattern=dots] (0,0,0) -- (1,0,0) -- (1,1,0) -- (0,1,0) -- cycle;
\draw[thick, -stealth] (0,0,0) -- (1.3,0,0);
\draw[thick, -stealth] (0,0,0) -- (0,1.3,0);
\draw[thick] (0,0,0) -- (0,0,1);
\draw[thick, -stealth] (0,0,1) -- (1.3,1.3,1);
\def\x{.2}
\fill (1.3+\x,1.3+\x,1) circle (2pt);
\fill (1.3+\x,1.3+\x,0) circle (2pt);
\fill (1.3+\x,0,0) circle (2pt);
\fill (0,1.3+\x,0) circle (2pt);
\draw[line width=.1cm] (1.3+\x,1.3+\x,0) -- (1.3+\x,0,0);
\draw[line width=.1cm] (1.3+\x,1.3+\x,0) -- (0, 1.3+\x,0);
\end{tikzpicture}
\]
Note that two polyhedra $P$ and $Q$ whose recession cones intersect improperly
must have $P\cap Q= \emptyset$. Thus checking the face relation on this
intersection is trivial.  However, the compactification $\compact{\pc}$ lacks a
canonical embedding into a tropical toric variety.
\end{example}

\begin{theorem}\label{prop:polyspace}
The   compactification
$\compact{\pc}$ is a compact polyhedral space.%
\end{theorem}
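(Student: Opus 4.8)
The plan is to verify the two defining properties of a polyhedral space from \autoref{def:polyhedralspace} --- existence of an atlas of charts into polyhedral subspaces of $\mathbb{T}^r$ with extended affine linear transition maps --- together with compactness and the point-set conditions (paracompact, second countable, Hausdorff). The building blocks are already in place: for a single polyhedron $\poly$ with pointed recession cone, \autoref{lemma:P_pc} tells us that $\compact{\poly}$ is a polyhedral complex inside $\compactifying(\tail(\poly))$, and since $\tail(\poly)$ is a pointed cone, $\compactifying(\tail(\poly))$ is (isomorphic to) an open --- in fact closed in the compact case --- subset of some $\mathbb{T}^{r}$, where $r = \dim\tail(\poly)$ after choosing coordinates. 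Thus each $\compact{\poly}$ comes with a natural chart, and the atlas for $\compact{\pc}$ will be $\{\compact{\poly} \mid \poly \in \pc\}$ (or a suitable open neighbourhood of each, obtained by taking the stars of faces to make the $U_\alpha$ genuinely open in $\compact{\pc}$).

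First I would make precise how $\compact{\pc}$ is assembled as a topological space: it is the quotient of $\coprod_{\poly\in\pc}\compact{\poly}$ by the identifications $\compact{F}\hookrightarrow\compact{\poly}$ for each face $F\le\poly$, these inclusions being well-defined by \autoref{lemma:P_pc_1} (transitivity of the face relation) and the gluing being along faces by \autoref{lemma:P_pc_2} (intersections of faces are faces). Second, I would check that the transition maps are extended affine linear: on the overlap $\compact{\poly}\cap\compact{\poly'}$, which by \autoref{lemma:P_pc_2} is a common face $\compact{G}$, the two chart maps differ by the projection maps $\pi_{\sigma,\tau}$ and the identifications of the ambient $\compactifying(\tau)\cong\RR^{\mathrm{codim}\,\tau}$; all of these are induced by linear maps of the lattices, hence extend to extended affine linear maps of the tropical toric strata in the sense of \cite[Definition 2.18]{JellShawSmacka}. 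Third, the point-set hypotheses are inherited because $\pc$ is a finite collection of polyhedra, so $\compact{\pc}$ is a finite CW-type gluing of compact metrizable pieces, hence compact, second countable, and Hausdorff (the Hausdorff property needs a small argument that the gluing is along closed subspaces, which follows since faces are closed in $\compact{\poly}$).

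The main obstacle I anticipate is the \emph{compactness} claim and, relatedly, making the charts genuinely \emph{open}: each individual $\compact{\poly}$ is compact when we compactify with respect to its own recession cone $\tail(\poly)$ (this is exactly \autoref{condition:covering}, i.e. $\Delta=\tail(\poly)$ is a compactifying fan for $\poly$), but one must ensure these local compactifications are mutually compatible so that no face ``runs off to infinity'' in a direction recorded by one polyhedron but not its neighbour. When $\pc$ does not have a recession fan this is precisely why the result is only an \emph{abstract} polyhedral space and not a subspace of a single tropical toric variety; the compatibility must instead be checked face-pair by face-pair, using that two polyhedra with improperly intersecting recession cones are disjoint (as noted after \autoref{example:no_tail_fan:abstract}), so the problematic overlaps simply do not occur. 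Concretely, I would argue compactness by exhibiting $\compact{\pc}$ as a finite union of the compact sets $\compact{\poly}$, each closed in $\compact{\pc}$ (again because the gluing is along closed faces), which immediately gives compactness; and I would take as $U_\alpha$ the open star of the vertex or minimal face, intersected with $\compact{\poly}$, to repair openness of the charts while keeping the transition maps extended affine linear.
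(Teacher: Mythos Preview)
Your proposal is correct and follows essentially the same route as the paper: define the topology on $\compact{\pc}$ as the weak topology with respect to the inclusions $\compact{\poly}\hookrightarrow\compact{\pc}$ (your quotient-of-the-disjoint-union description is equivalent), deduce the point-set properties from finiteness and compactness of the pieces, and take open stars of faces $\tau\in\pc$ as the charts into $\mathbb{T}^{r_\tau}\times\RR^{n_\tau}$. The paper's argument is considerably terser than yours and leaves both the transition-map condition and the compactness/Hausdorff verifications essentially as assertions; your explicit compactness argument via the finite union $\bigcup_{\poly}\compact{\poly}$ and your discussion of why improperly intersecting recession cones cause no trouble (disjointness of the corresponding polyhedra) are useful fillings-in rather than a different approach.
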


\begin{proof}
The closure of each face $\compact{P}$ is a topological space, as it can be equipped with the subspace topology from its inclusion in $N_{\RR}(\tail(P))$. Moreover, it is clearly second countable. 
We can specify a topology on $\compact{\pc}$ by insisting that the pullbacks of all inclusions $\compact{P} \hookrightarrow \compact{\pc}$ be continuous. This  topology on $\compact{\pc}$ is then also second countable since 
$\pc$ is has a finite number of faces. %
The space $\compact{\pc}$ is compact. Moreover, distinct points can be separated by open neighborhoods, so it is Hausdorff. %

Equipping $\compact{\pc}$  with the collection of charts $\{ \phi_{\tau}: U_{\tau} \to \mathbb{T}^{r_{\tau}} \times \RR^{n_{\tau}}\}$, where $\tau$ is a face of $\pc$ and $U_{\tau}$ is the open star of $\tau$ in $\compact{\pc}$, makes $\compact{\pc}$ an abstract polyhedral space. 
\end{proof}

\section{Data structure for the compactification}\label{sec:datastr}
In this section we will move to the more algorithmic part, and describe how one
can encode the face structure of the compactification algorithmically.

Given a polyhedron $\poly$, we use the following conventions:
\begin{itemize}
\item $V=\{v^0,\ldots, v^m\}$ denotes the set of vertices of the polyhedron $\poly$,
\item $R=\{\rho^0,\ldots,\rho^n\}$ denotes the set of rays of $\poly$.
\end{itemize}

\begin{definition}\label{def:realisation}
Denote by $\newVert=\vertices(\compact{\poly})$ the vertices of the
compactification of $\poly$. Then every vertex $a\in\newVert$ comes with a face
$F_a:=\parent(a)$ of $\poly$ such that $\dim(F_a)=\dim(\tail(F_a))$. So $F_a$ can
be uniquely represented via the elements of $V\disjoint R$ and we call this
representation the \define{realisation} $\realisation(a)$ of $a$.

Furthermore we define two maps $\sedentarity$ and $\nu$:
\[
\sedentarity(a)\ :=\ R\cap\realisation(a)\ \mbox{ and }\ \nu(a)\ :=\ V\cap\realisation(a).
\]
\end{definition}

\begin{definition}
For a subset $S\subseteq\newVert$ we define
\begin{itemize}
\item $\realisation(S):=\cup_{a\in S}\realisation(a)$ the \define{original realisation} of $S$,
\item $\minface{S}$ the \define{smallest face of $\poly$ containing $\realisation(S)$}.
\item $\minfacevert{S} := \vertices(\minface{S})\subseteq V\disjoint R$ the vertices and rays of $\minface{S}$.
\item $\sedentarity(S):=\cap_{a\in S}\sedentarity(a)$ the \define{sedentarity} of $S$,
\end{itemize}
\end{definition}

Every face of the polyhedron $\poly$ has a unique description in terms of
elements of $V\disjoint R$. The compactified polyhedron $\compact{\poly}$
has no rays, since it is compact. We already know the vertices of
$\compact{\poly}$ by \autoref{prop:compact_vertices}, hence analogously every
face of $\compact{\poly}$ has a unique description as a subset of $\newVert$.

The following lemma connects the maps $\pi_{\sigma}$ with the realisation map
$\realisation$.
\begin{lemma}\label{lemma:minface_is_parent}
For a face $\compact{F}$ of $\compact{\poly}$, we have
\[
\minface(\vertices(\compact{F}))\ =\ \parent(\compact{F}).
\]
\end{lemma}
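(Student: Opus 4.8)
I would prove the two inclusions between the faces $\minface(\vertices(\compact{F}))$ and $\parent(\compact{F})$ of $\poly$ separately. Throughout, set $G:=\parent(\compact{F})$ and $\tau:=\trunk(\compact{F})$.

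\emph{The inclusion $\minface(\vertices(\compact{F}))\le G$.} Every vertex $a$ of $\compact{F}$ satisfies $a\le\compact{F}$ in the face order, so \autoref{lemma:face_relation_to_parent} gives $\parent(a)\le\parent(\compact{F})=G$; hence $\realisation(a)=\vertices(\parent(a))$ is a subset of $\vertices(G)$ inside $V\disjoint R$. Taking the union over all vertices $a$ of $\compact{F}$ shows $\realisation(\vertices(\compact{F}))\subseteq\vertices(G)$, and since $G$ is a face of $\poly$ whose vertex set contains this union, minimality of $\minface$ yields $\minface(\vertices(\compact{F}))\le G$.

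\emph{The inclusion $G\le\minface(\vertices(\compact{F}))$.} Using $\pi_\sigma=\pi_{\sigma,\tau}\circ\pi_\tau$ and replacing $\poly$ by $\pi_\tau(\poly)$ inside $\compactifying(\tau)$, one reduces to the case $\tau=0$; then $G=F$ is an ordinary face of $\poly$ and $\compact{F}$ is its compactification with respect to $\tail(F)$. By \autoref{prop:compact_vertices} the vertices of $\compact{F}$ are exactly the vertices of the polyhedra $\pi_\sigma(F)$ for $\sigma\le\tail(F)$, and for such a vertex $a$ one has $\parent(a)=\pi_\sigma^{-1}(a)\cap\poly=\pi_\sigma^{-1}(a)\cap F$; by \autoref{lemma:preimage}, \autoref{cor:parent_equality} and \autoref{prop:vertices} (applied to $F$) this is a face $H\le F$ with $\dim(H)=\dim(\tail(H))$, and conversely \autoref{lemma:projection_of_face} together with \autoref{lemma:parent_independent} show that every such $H$ equals $\parent(\pi_{\tail(H)}(H))$ for a vertex $\pi_{\tail(H)}(H)$ of $\compact{F}$. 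Thus $\minface(\vertices(\compact{F}))$ is the smallest face of $F$ containing all faces $H\le F$ with $\dim(H)=\dim(\tail(H))$.

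It therefore remains to prove the purely polyhedral statement: for every polyhedron $F$, the faces $H\le F$ with $\dim(H)=\dim(\tail(H))$ generate $F$ in the face lattice. I would argue by induction on $\dim(F)$. If $\dim(F)=\dim(\tail(F))$, then $F$ is itself such a face. Otherwise $F$ is neither an affine subspace nor an affine halfspace, so it has two distinct facets $F_1,F_2$ with $\dim(F_i)<\dim(F)$; every face $H\le F_i$ with $\dim(H)=\dim(\tail(H))$ has the same property inside $F$, so by the inductive hypothesis the smallest face of $F$ containing all the relevant $H$ already contains $F_1$ and $F_2$, and hence equals $F$, because the only face of $F$ containing two distinct facets is $F$ itself. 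Combined with the first inclusion this gives $\minface(\vertices(\compact{F}))=\parent(\compact{F})$.

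The main obstacle is this last induction — that the ``conical'' faces $H$ (those with $\dim(H)=\dim(\tail(H))$) generate the whole polyhedron. For a polytope it is immediate, since $F=\conv(\vertices(F))$, but in the unbounded case these faces are not merely the vertices (for $F=\RR_{\ge0}^2$ the whole cone is already conical, and for a halfspace times a segment there are two conical facets), so the facet argument is genuinely needed. A secondary, more routine point is checking that the various incarnations of the parent map — in $\poly$, in $F$, and in the quotients $\compactifying(\sigma)$ — agree, which is what \autoref{lemma:parent_independent} and \autoref{lemma:projection_of_face} provide and which also underlies the reduction to $\tau=0$.
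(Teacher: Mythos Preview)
Your argument for the first inclusion matches the paper's exactly. For the reverse inclusion, however, you take a genuinely different route. The paper argues by contradiction: assuming $F':=\minface(\vertices(\compact{F}))$ is a proper face of $\parent(\compact{F})$, it forms the face $\compact{\pi_\tau(F')}$ of $\compact{\poly}$ (after observing $\tau\le\tail(F')$), shows that every vertex of $\compact{F}$ is already a vertex of $\compact{\pi_\tau(F')}$, and then invokes \autoref{lemma:face_relation_to_parent} once more to get $\parent(\compact{F})\le\parent(\compact{\pi_\tau(F')})=F'$, a contradiction. This stays entirely inside the compactification framework and never leaves the lemmas already set up.

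Your approach instead isolates an independent polyhedral fact --- that the faces $H\le F$ with $\dim(H)=\dim(\tail(H))$ generate $F$ in the face lattice --- and proves it by a clean induction on $\dim(F)$, using that a polyhedron with $\dim(F)\ne\dim(\tail(F))$ is neither an affine subspace nor a halfspace and hence has at least two facets. The induction is correct (and does not even need the recession cone to be pointed). The cost is the reduction to $\tau=0$ and the bookkeeping needed to align the parent maps in $\poly$, in $F$, and in $\pi_\tau(\poly)$; you rightly flag this as routine, and it is, but the paper's contradiction argument sidesteps this reduction entirely. On the other hand, your argument extracts a self-contained lemma about ``conical'' faces that could be reused elsewhere, whereas the paper's proof is shorter but more tightly coupled to the surrounding machinery.
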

\begin{proof}
For the inclusion $\minface(\vertices(\compact{F}))\subseteq
\parent(\compact{F})$, pick any vertex $a\in\vertices(\compact{F})$. By
\autoref{lemma:face_relation_to_parent} we have $\parent(a) \le
\parent(\compact{F})$. So $\parent(a)$ is a face of $\parent(\compact{F})$ for
all vertices $a\in\vertices(\compact{F})$. Then $\parent(\compact{F})$ must
contain the minimal face of $\poly$ containing all the $\parent(a)$.

For other inclusion $\minface(\vertices(\compact{F}))\supseteq \parent(\compact{F})$,
assume that
\[
F':=\minface(\vertices(\compact{F}))\subsetneq\parent(\compact{F}).
\]
Then
$F'<\parent(\compact{F})$ is a face of $\parent(\compact{F})$. Our goal is to
arrive at a contradiction. Let $\tau=\trunk(\compact{F})$, 
then \(\tau \le \tail(F')\), and we can consider the
face $\compact{\pi_{\tau}(F')}$ of $\compact{\poly}$. 
We will show that
$\vertices(\compact{F})\subseteq\vertices(\compact{\pi_{\tau}(F')})$. This
implies that $\compact{F}$ is a face of $\compact{\pi_{\tau}(F')}$, and hence
again by \autoref{lemma:face_relation_to_parent}
\[
\parent(\compact{F})\le\parent(\compact{\pi_{\tau}(F')}).
\]
But as we have seen in the proof of \autoref{lemma:parent_independent}
for \(\tau \in \support(\compact{F'})\) it holds that 
\(\parent(\compact{\pi_{\tau}(F')})=F'\) which contradicts our initial assumption.

Now take any vertex $a\in\vertices(\compact{F})$. Since
\[
\realisation(a)\subseteq\realisation(\vertices(\compact{F}))\subseteq\vertices(F'),
\]
we know that $\parent(a)$ is a face of $F'$, and hence
$\tau\le\tail(\parent(a))\le\tail(F')$. Thus
$\tail(\parent(a))\in\support(\compact{\pi_{\tau}(F')})$, implying
$a\in\vertices(\compact{\pi_{\tau}(F')})$.
\end{proof}

\begin{remark}\label{remark:minface}
Taking $\minface$ on the left hand side in
\autoref{lemma:minface_is_parent} is necessary, i.e. in general we only have
\[
\realisation(\vertices(\compact{F}))\ \subsetneq\ \vertices(\parent(\compact{F})).
\]
Consider the face $\compact{F}$ of $\compact{\poly}$ as in the figure below in
$\mathbb{R}^2$ with recession cone generated by the direction $(1,0)$. 
\[
\begin{tikzpicture}
\draw[color=black!40] (-1.3,-1.3) grid (4.3,1.3);
\fill[pattern color=black!40, pattern=dots]
(4.3,1) 
-- (0,1) node[above] {$v_0$} 
-- (-1,0) node[left] {$v_2$} 
-- (0,-1) node[below] {$v_1$} 
-- (4.3,-1) -- cycle;
\draw[thick] (4,1) -- (0,1) -- (-1,0) -- (0,-1) -- (4,-1);
\draw[-stealth, thick] (0,1) -- (4.3,1) node[right] {$\rho_0$};
\draw[-stealth, thick] (0,-1) -- (4.3,-1) node[right] {$\rho_1$};
\draw[thick] (6,-1) -- (6,1);
\node at (0,0) {\textbf{$\poly$}};
\fill (6,1) node[above] {$a_0$} circle (2pt);
\fill (6,-1) node[below] {$a_1$} circle (2pt);
\node[right] at (6,0)  {$\compact{F}$};
\end{tikzpicture}
\]
The compactification has a face $\compact{F}$ at infinity  that is a
line segment. Its preimage is the whole polyhedron $\poly$, but the realisations of the
vertices do not contain the middle vertex that is adjacent to two bounded edges.
Since $\realisation(\vertices(\compact{F})) =   \realisation(a_0) \cup  \realisation(a_1) = \{v_0, v_1, \rho_0, \rho_1\},$
but  $\vertices(\parent(\compact{F})) =   \{v_0, v_1, v_2, \rho_0, \rho_1\}.$

Note that for a vertex \(a\) it holds that \(\minfacevert(a) =
\realisation(a)\), this is due to \autoref{prop:vertices}.
\end{remark}

\begin{remark}
The image in \autoref{remark:minface} also highlights an important difference
of the tropical compactification versus the compactification in tropical
projective space as described in \cite[Fig. 16]{digraph}. Here parallel lines
get different end points, while in \cite{digraph} they would get the same.
\end{remark}

\begin{lemma}\label{lemma:trunk_from_rays}
For a face $\compact{F}$ of $\compact{\poly}$, we have
\[
\cone(\sedentarity(\vertices(\compact{F})))\ =\ \trunk(\compact{F}).
\]
\end{lemma}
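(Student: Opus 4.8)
The plan is to reduce the statement to a single fact about individual vertices, and then intersect. Write $\tau := \trunk(\compact{F})$ and let $F \le \pi_{\tau}(\poly)$ be the face whose compactification is $\compact{F}$. The fact I would establish first is: \emph{for every vertex $a$ of $\compact{\poly}$ one has $\cone(\sedentarity(a)) = \trunk(a)$}, where $\trunk(a)$ denotes the unique cone $\sigma$ with $a \in \compactifying(\sigma)$. By \autoref{def:realisation}, $\sedentarity(a) = R \cap \realisation(a)$ is precisely the set of rays of the polyhedron $\parent(a)$, so $\cone(\sedentarity(a)) = \tail(\parent(a))$ by the $V$-description of a polyhedron. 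Since $a$ is a vertex (hence compact, with a single supporting cone $\sigma := \trunk(a)$), \autoref{lemma:parent_independent} gives $\parent(a) = \pi_{\sigma}^{-1}(a) \cap \poly$. As $\pi_{\sigma}^{-1}(a)$ is a translate of $\vsspan(\sigma)$, we obtain $\tail(\parent(a)) = \vsspan(\sigma) \cap \tail(\poly)$, and this equals $\sigma$ because $\sigma$ is a face of the pointed cone $\tail(\poly)$. Hence $\cone(\sedentarity(a)) = \sigma = \trunk(a)$.

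For the inclusion ``$\subseteq$'', I would pick a vertex in the lowest stratum $\compactifying(\tau)$. Since $\tau$ is a face of the pointed cone $\tail(\poly)$, the polyhedron $\pi_{\tau}(\poly)$ is pointed (its recession cone is $\pi_{\tau}(\tail(\poly))$ by \autoref{rem:simplified_support}), hence so is its face $F$; choose a vertex $v_0$ of $F$. By \autoref{prop:compact_vertices} this $v_0$ is a vertex of $\compact{F}$, and it lies in $\compactifying(\tau)$, so $\trunk(v_0) = \tau$ and the fact above gives $\cone(\sedentarity(v_0)) = \tau$. Since $\sedentarity(\vertices(\compact{F})) = \bigcap_{a} \sedentarity(a) \subseteq \sedentarity(v_0)$, applying $\cone(\cdot)$ gives $\cone(\sedentarity(\vertices(\compact{F}))) \subseteq \tau$.

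For the inclusion ``$\supseteq$'', take any vertex $a$ of $\compact{F}$. By \autoref{prop:compact_vertices}, $a$ is a vertex of a component $\pi_{\sigma_a,\tau}(F)$, so $a \in \compactifying(\sigma_a)$ with $\sigma_a \in \support(\compact{F})$. As $\tau$ is the unique minimal element of $\support(\compact{F})$ (\autoref{def:compact_face}), we get $\tau \le \sigma_a = \cone(\sedentarity(a))$. Hence every ray of $\tau$, being a ray of $\tail(\poly)$ lying in $\sigma_a$, is a ray of $\parent(a)$ and therefore belongs to $\sedentarity(a)$. Since this holds for all $a \in \vertices(\compact{F})$, all rays of $\tau$ lie in $\sedentarity(\vertices(\compact{F}))$, so $\tau \subseteq \cone(\sedentarity(\vertices(\compact{F})))$. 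Combining with the previous paragraph, $\cone(\sedentarity(\vertices(\compact{F}))) = \tau = \trunk(\compact{F})$.

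The step I expect to be the main obstacle is the elementary but essential bookkeeping with recession cones: the identity $\tail(\parent(a)) = \vsspan(\sigma) \cap \tail(\poly)$, the equality $\vsspan(\sigma) \cap \tail(\poly) = \sigma$ for a face $\sigma$ of the pointed recession cone, and the companion observation that the rays of a face of $\tail(\poly)$ are exactly the rays of $\tail(\poly)$ lying inside it. All of these are standard for pointed polyhedral cones, but they rely on compactifying with respect to $\Delta = \tail(\poly)$ and on the standing hypothesis that $\Delta$ has no lineality; dropping either can break the key equality (compare the remark following \autoref{lemma:P_pc_2}).
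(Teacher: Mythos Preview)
Your proof is correct and follows essentially the same route as the paper's: identify $\cone(\sedentarity(a))$ with the stratum cone $\sigma$ containing each vertex $a$ (the paper's ``$\tail(\parent(a))=\sigma$''), then use that $\tau=\trunk(\compact{F})$ is the minimal supporting cone and that $F_\tau$ has a vertex. The paper's argument is a terse sketch of exactly these two steps, whereas you spell out the two inclusions and the recession-cone bookkeeping explicitly.
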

\begin{proof}
Pick a vertex $a\in\vertices(\compact{F})$. Then this is a vertex of some
$F_{\sigma}$. Now observe that all vertices $a\in\vertices(F_{\sigma})$ have
$\tail(\parent(a))=\sigma$. Since $\tau = \trunk(\compact{F})$ is the unique
minimal element of $\support(\compact{F})$, we only need to make sure that
$F_{\tau}$ has a vertex. 
This is true, since we compactify with respect to the recession cone $\tail(\poly)$.
\end{proof}

Given a subset $S\subseteq \newVert$ we want to determine whether it is the set of
vertices of a face of the compactification of $\poly$. This can be done using the closure operator. 

\begin{theorem}\label{prop:closure_operator}
Define the set $\compact{S}\ :=\ \{a\in \newVert\ |\ \sedentarity(S)\subseteq \realisation(a)\subseteq \minfacevert(S)\}$.
The set $S$ is the vertex set of a face of the compactification of $\poly$ if
and only if $S=\compact{S}$. The set $\compact{S}$ is the smallest face of the
compactification $\compact{\poly}$ containing $S$, and hence, the operator
$S\mapsto \compact{S}$ is a closure operator as in \autoref{def:closure_operator}.
\end{theorem}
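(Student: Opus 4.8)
The plan is to verify the three closure axioms directly for the operator $S \mapsto \compact{S}$, after first establishing the characterization that $S$ is the vertex set of a face of $\compact{\poly}$ if and only if $S = \compact{S}$. Recall that by \autoref{prop:compact_vertices} every element of $\newVert$ is a vertex of some $\pi_\sigma(\poly)$, and each vertex $a$ carries the data $\realisation(a)$, $\sedentarity(a)$, $\nu(a)$. The key translation is: a subset $S \subseteq \newVert$ should be $\vertices(\compact{F})$ for $\compact{F} = \compact{\minface(S)_{\,\sigmamin}}$, where $\sigmamin = \cone(\sedentarity(S))$ is the candidate trunk. So the first step is to show $\compact{S}$, as defined, equals $\vertices(\compact{\minface(S)_{\,\sigmamin}})$. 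Using \autoref{lemma:minface_is_parent} and \autoref{lemma:trunk_from_rays}, a vertex $b$ lies in $\vertices(\compact{F})$ for $\compact{F}$ with parent $\minface(S)$ and trunk $\sigmamin$ precisely when $\parent(b) \le \minface(S)$ — equivalently $\realisation(b) \subseteq \minfacevert(S)$ by \autoref{prop:vertices} and the remark that $\minfacevert(a) = \realisation(a)$ for vertices — and when $\sigmamin \le \tail(\parent(b))$, which (since $\tail(\parent(b)) = \cone(\sedentarity(b))$) translates to $\sedentarity(S) \subseteq \realisation(b)$. This is exactly the defining condition of $\compact{S}$, so $\compact{S} = \vertices(\compact{\minface(S)_{\,\sigmamin}})$. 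In particular $\compact{S}$ is always the vertex set of an honest face of $\compact{\poly}$, and it contains $S$ since each $a \in S$ satisfies $\sedentarity(S) = \cap_{b \in S}\sedentarity(b) \subseteq \sedentarity(a) \subseteq \realisation(a)$ and $\realisation(a) \subseteq \realisation(S) \subseteq \minfacevert(S)$.

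With that identification in hand, extensiveness $S \subseteq \compact{S}$ is the computation just made. For the characterization of faces: if $S = \compact{S}$ then $S$ is a vertex set of a face by the above; conversely if $S = \vertices(\compact{G})$ for some face $\compact{G} \le \compact{\poly}$, then by \autoref{lemma:minface_is_parent} $\minface(S) = \parent(\compact{G})$ and by \autoref{lemma:trunk_from_rays} $\cone(\sedentarity(S)) = \trunk(\compact{G})$, so $\compact{\minface(S)_{\,\sigmamin}} = \compact{G}$ and hence $\compact{S} = \vertices(\compact{G}) = S$. For minimality: any face $\compact{G}$ with $S \subseteq \vertices(\compact{G})$ has $\parent(\compact{G}) \supseteq \minface(S)$ (it is a face of $\poly$ containing all the $\parent(a)$, $a \in S$) and $\trunk(\compact{G}) \le \cone(\sedentarity(S)) = \sigmamin$ (the trunk is the minimal element of the support, and each $\parent(a)$ has recession cone $\tail(\parent(a)) \ge \trunk(\compact{G})$, forcing $\sedentarity(S) \subseteq \realisation(a)$ to lie above the trunk); combining these face relations via \autoref{lemma:face_relation_to_parent} and the simplified support formula gives $\compact{\minface(S)_{\,\sigmamin}} \le \compact{G}$, hence $\compact{S} = \vertices(\compact{\minface(S)_{\,\sigmamin}}) \subseteq \vertices(\compact{G})$.

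The closure axioms now follow formally. Monotonicity: if $S \subseteq T$ then $\minface(S) \le \minface(T)$ so $\minfacevert(S) \subseteq \minfacevert(T)$, and $\sedentarity(T) = \cap_{a \in T}\sedentarity(a) \subseteq \cap_{a \in S}\sedentarity(a) = \sedentarity(S)$; both sandwich bounds defining $\compact{S}$ relax, so $\compact{S} \subseteq \compact{T}$. Idempotency: $\compact{S}$ is the vertex set of the face $\compact{F} := \compact{\minface(S)_{\,\sigmamin}}$, so by the face characterization $\compact{\compact{S}} = \compact{S}$; alternatively observe $\minface(\compact{S}) = \minface(\vertices(\compact{F})) = \parent(\compact{F}) = \minface(S)$ by \autoref{lemma:minface_is_parent} and $\sedentarity(\compact{S}) = \sedentarity(\vertices(\compact{F})) = \sedentarity(S)$ by \autoref{lemma:trunk_from_rays} together with \autoref{def:realisation}, so the defining data of $\compact{\compact{S}}$ and $\compact{S}$ coincide.

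The main obstacle is the very first step: proving carefully that the set-theoretic description $\{a \mid \sedentarity(S) \subseteq \realisation(a) \subseteq \minfacevert(S)\}$ coincides with $\vertices(\compact{\minface(S)_{\,\sigmamin}})$. One direction — that every such $a$ is genuinely a vertex of that face — requires knowing that $\cone(\sedentarity(S))$ is actually a cone of $\tail(\poly)$ lying below $\tail(\minface(S))$, so that the face $\compact{\minface(S)_{\,\sigmamin}}$ is well-defined; this uses that we compactify with respect to $\tail(\poly)$ itself (the hypotheses of \autoref{lemma:trunk_from_rays}). The reverse inclusion needs \autoref{remark:minface}, the warning that $\realisation(\vertices(\compact{F})) \subsetneq \vertices(\parent(\compact{F}))$ in general, so one must genuinely invoke $\minface$ rather than a naive union of realisations. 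Once this bookkeeping is pinned down the rest is routine.
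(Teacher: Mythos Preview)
Your argument is correct and rests on the same two lemmas the paper uses (\autoref{lemma:minface_is_parent} and \autoref{lemma:trunk_from_rays}), together with the same candidate face $\compact{\pi_{\sigmamin}(\minface(S))}$ with $\sigmamin=\cone(\sedentarity(S))$. The organizational difference is that you establish the identification $\compact{S}=\vertices\bigl(\compact{\pi_{\sigmamin}(\minface(S))}\bigr)$ for \emph{arbitrary} $S$ as a first step, whereas the paper proves the two directions of the characterization separately and only constructs this face in the ``$\Leftarrow$'' direction; your front-loading makes both directions, idempotency, and minimality fall out immediately, and you also supply the explicit verification of monotonicity and minimality that the paper leaves implicit in the phrase ``and hence, the operator $S\mapsto\compact{S}$ is a closure operator.''
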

\begin{proof}
First we will prove the implication ``$\Rightarrow$''.
Let $\compact{F}$ be a face of $\compact{\poly}$. Define
$S:=\vertices{\compact{F}}$. We want to show that $S=\compact{S}$. The
inclusion $S\subseteq\compact{S}$ is trivial. 
Let $a\in\compact{S}$. 
Then
$\realisation(a)\subseteq\minfacevert(S)$ implies that $\parent(a)$ is contained in
$\parent(\compact{F})$, in particular it is a face. Thus, the recession cone $\tail(\parent(a))$ is a
face of $\tail(\parent(\compact{F}))$. 
The condition
$\sedentarity(S)\subseteq\realisation(a)$ together with \autoref{lemma:trunk_from_rays}
implies that $\trunk(\compact{F})$ is contained in $\tail(\parent(a))$. 
Thus $\tail(\parent(a))\in\support(\compact{F})$ and $a$ is
a vertex of $F_{\tail(\parent(a))}$.

For the other direction we have $S=\compact{S}$ and want to show that $S$ is the vertex set of a face
$\compact{F}$. Thus we pick $\tau=\cone(\sedentarity(S))$. Furthermore pick
\(\mathcal{F} = \minface(S)\).
Then we claim that $S$ is the vertex set of
$\compact{F} := \compact{\pi_{\tau}(\mathcal{F})}$. 
Denote by
$S':=\vertices(\compact{F})$. Then by \autoref{lemma:trunk_from_rays} 
\[
\cone(\sedentarity(S'))\ =\ \trunk(\compact{F})\ =\ \tau\ =\ \cone(\sedentarity(S)).
\]
Furthermore we have
\[
\minfacevert(S')\ =\ \vertices(\parent(\compact{F}))\ =\ \vertices(\mathcal{F})\ =\ \minfacevert(S)
\]
due to \autoref{lemma:minface_is_parent} and \autoref{lemma:parent_independent}. Since
$\compact{\compact{S}}=\compact{S}$ and faces of $\poly$ are closed as well, we get
$S=S'$.
\end{proof}

\begin{theorem}
Let $\pc$ be a polyhedral complex in $N_{\mathbb{R}}$ that has recession fan
$\Delta = \tail(\pc)$. Then the Hasse diagram of the closure $\compact{\pc}$
in $N_{\mathbb{R}}(\Delta)$ is computed via Ganter's algorithm using
the closure operator defined in \autoref{prop:closure_operator}. Improperly
intersecting recession cones live in different charts of the polyhedral space.
\end{theorem}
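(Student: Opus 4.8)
The plan is to reduce everything to the single-polyhedron result \autoref{prop:closure_operator}. Ganter's algorithm, given a closure system on a finite set, returns the Hasse diagram of its lattice of closed sets in time linear in the number of covering relations, so it is enough to produce a closure operator on the finite set $\vertices(\compact{\pc})$ whose closed sets are exactly the vertex sets of faces of $\compact{\pc}$. By \autoref{prop:pc} the space $\compact{\pc}$ is a polyhedral complex in $\compactifying(\Delta)$, obtained by gluing the compactifications $\compact{\poly}$, $\poly\in\pc$, along the compactifications of their common faces, and by \autoref{prop:compact_vertices} its vertices form $\bigcup_{\poly\in\pc}\vertices(\compact{\poly})$. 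The first step is therefore to extend the combinatorial encoding of \autoref{sec:datastr} --- the realisation $\realisation$, the sedentarity $\sedentarity$, and the operator $\minface$ --- from one polyhedron to all of $\pc$, using a global labelling of the vertices $V$ and rays $R$ of $\pc$.

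The structural observation I would isolate first is that no face of $\compact{\pc}$ straddles two maximal cells: if $Q\le\compact{\pc}$, then $Q=\compact{F}$ for some $F\le\pi_{\tau}(\poly)$, and by \autoref{cor:parent_equality} $Q=\compact{\pi_{\tau}(\parent(Q))}$ with $\parent(Q)\in\pc$, so $\vertices(Q)\subseteq\vertices(\compact{\parent(Q)})$ and, by \autoref{lemma:minface_is_parent}, $\parent(Q)=\minface(\vertices(Q))$ is recovered as the smallest face of $\pc$ containing $\realisation(\vertices(Q))$. Next one checks that the encoding is well-defined: if a vertex $a$ lies in $\vertices(\compact{\poly})\cap\vertices(\compact{\poly'})$, then $\compact{\poly}\cap\compact{\poly'}=\compact{\poly\cap\poly'}$ --- the case $F=\poly$, $F'=\poly'$, $\tau=\tau'=0$ of the third point in the proof of \autoref{prop:pc} --- and $\poly\cap\poly'$ is a common face; the uniqueness in \autoref{prop:vertices} together with \autoref{lemma:parent_independent} then forces $\parent(a)$, and hence $\realisation(a)$, $\sedentarity(a)$ and the dimension of every face through $a$, to be independent of the ambient polyhedron. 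Improperly intersecting recession cones cause no trouble at this step: such a pair of polyhedra is disjoint, as observed in \autoref{example:no_tail_fan:abstract}, so their compactifications share no face and, in the atlas of \autoref{prop:polyspace}, occupy different charts.

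With the encoding fixed, define $\minface(S)$, for arbitrary $S\subseteq\vertices(\compact{\pc})$, as the smallest face of $\pc$ containing $\realisation(S)$ when such a face exists --- it is unique by the intersection axiom for polyhedral complexes --- and otherwise adjoin a formal maximal element, as in \autoref{example:closure_operator}, so that $\compact{S}$ becomes all of $\vertices(\compact{\pc})$ and $S$ is declared non-closed. On subsets $S$ whose realisation lies in a single polyhedron, $\compact{S}$ agrees with the closure operator of \autoref{prop:closure_operator} computed inside $\minface(S)$, since by the structural observation the smallest face of $\compact{\pc}$ containing $S$ lies in $\compact{\minface(S)}$; on the remaining subsets no face of $\compact{\pc}$ contains $S$, because its parent would be a face of $\pc$ containing $\realisation(S)$, so declaring them non-closed is correct. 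Verifying the three axioms of \autoref{def:closure_operator} then reduces to \autoref{prop:closure_operator} on each polyhedron plus a short check for the formal top. Feeding this closure operator into Ganter's algorithm produces $\hasse(\compact{\pc})$; the same argument, with \autoref{prop:polyspace} replacing \autoref{prop:pc}, covers a polyhedral complex with no recession fan, where improperly intersecting recession cones end up in different charts.

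The main obstacle is the gluing bookkeeping: making precise that a vertex of $\compact{\pc}$ carries an unambiguous parent face --- and hence an unambiguous realisation, sedentarity, and dimension --- regardless of which polyhedron of $\pc$ it is seen through, so that the single-polyhedron closure operator of \autoref{prop:closure_operator} transfers to $\compact{\pc}$ verbatim, together with a clean treatment of the subsets that do not span a face so that extensiveness, monotonicity, and idempotency continue to hold globally.
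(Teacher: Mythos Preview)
The paper states this theorem without proof, treating it as the immediate culmination of \autoref{prop:closure_operator} (the closure operator for a single polyhedron) and \autoref{prop:pc} (that $\compact{\pc}$ is a polyhedral complex when $\pc$ has a recession fan), together with the general remark in \autoref{example:closure_operator} that the minimal-face closure operator works for polyhedral complexes. Your proposal is correct and follows exactly this implicit route, but supplies the gluing bookkeeping the paper omits: that $\realisation$, $\sedentarity$, and $\parent$ are well-defined on $\vertices(\compact{\pc})$ independently of the ambient polyhedron, that $\minface$ extends to $\pc$ via the intersection axiom, and that subsets spanning no face of $\pc$ are handled by a formal top element. This is precisely the ``seamless extension'' the paper promises in \autoref{sec:ganter} but never writes out.
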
 

\section{Signed incidence relations on compactifications}
\label{sec:sir}
With other computational goals in mind, it is useful to equip the Hasse diagram 
of the compactification with  a signed incidence relation, also known as an orientation map.
Such a  map is required to compute (co)-homology of the compactification and of cellular (co)-sheaves on it. 
 Details on cellular (co-)sheaves can be found
in \cite{curry_thesis} and \cite{ourTropHom}. %

\begin{definition}[{\cite[Definition~6.1.9]{curry_thesis}}]
Given a polyhedral complex $\pc$, a \define{signed incidence relation} is a map
\[
\begin{array}{ccc}
   \pc\times\pc & \to & \{0,\pm 1\}\\
   (\sigma,\tau) & \mapsto & [\sigma,\tau],
\end{array}
\]
such that
\begin{itemize}
   \item If $[\sigma,\tau]\not= 0$ then $\sigma\le \tau$; and
   \item For any pair $\sigma,\tau\in\pc$ we have
      $\sum_{\gamma}[\sigma,\gamma][\gamma,\tau]=0$.
\end{itemize}
\end{definition}
The original definition is for general cell complexes. We will rephrase this
definition for our concrete setting. %
\begin{definition}\label{def:signedIR}
A \define{signed incidence relation} on a polyhedral complex $\pc$ is a map
\[
\signedIR:\ \edges(\hasse(\pc))\quad \to \quad \{\pm 1\}
\]
such that for any two nodes $u,w\in\nodes(\hasse(\pc))$ we have
\[
\sum_{v} \signedIR(u,v)\signedIR(v,w) = 0.
\]
\end{definition}

\begin{lemma}[{\cite[Lemma~6.1.8]{curry_thesis}}]
For a polyhedral complex $\pc$ any non-trivial equation of
\autoref{def:signedIR} looks like
\[
\signedIR(u,v)\signedIR(v,w) + \signedIR(u,v')\signedIR(v',w)\ =\ 0.
\]
\end{lemma}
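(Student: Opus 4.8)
The plan is to argue combinatorially using the interval structure of the face lattice of a polyhedral complex. Fix nodes $u,w\in\nodes(\hasse(\pc))$; the sum in \autoref{def:signedIR} is over all $v$ with $u\le v\le w$, since $\signedIR$ is only defined on edges and any edge $(u,v)$ forces $u\le v$. The claim is that there are exactly two such $v$ whenever the corresponding product is nonzero, i.e. whenever the equation is non-trivial. First I would reduce to the case that $u\le w$ and $\dim(w)=\dim(u)+2$: if $u\not\le w$ the whole sum is empty; if $u=w$ or $\dim(w)-\dim(u)=1$ there is no intermediate edge $(u,v)$ and $(v,w)$ simultaneously, so again the sum is empty (a trivial equation); and if $\dim(w)-\dim(u)\ge 3$, then by the grading of the face lattice any chain $u\le v\le w$ through a single intermediate node would require $v$ to be a cover of $u$ and covered by $w$, which is impossible for a dimension gap of more than two, so once more the sum is empty. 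Hence a non-trivial equation only arises when $w$ covers $u$ at distance exactly two in the Hasse diagram.

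In that remaining case the key input is that $\compact{\pc}$ is a polyhedral complex in the sense of \autoref{def:pc} (this is \autoref{prop:pc}, or \autoref{lemma:P_pc} for a single polyhedron), so every closed interval $[u,w]$ in its face lattice is the face lattice of the polyhedron $w$ restricted to faces containing $u$; equivalently, the interval $[\parent\text{-style quotient}]$ is isomorphic to the face lattice of a polytope of dimension $\dim(w)-\dim(u)=2$, namely a line segment's worth of ``width'' — more precisely, the interval $[u,w]$ in the face lattice of the $2$-dimensional cell $w/u$ is a diamond: it has exactly two elements strictly between $u$ and $w$. This is the standard ``diamond property'' of polytopal face lattices (every rank-$2$ interval has exactly two intermediate elements). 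Thus $\{v: u < v < w\}=\{v_1,v_2\}$ has cardinality two, and the sum in \autoref{def:signedIR} collapses to
\[
\signedIR(u,v_1)\signedIR(v_1,w) + \signedIR(u,v_2)\signedIR(v_2,w) = 0,
\]
which is exactly the asserted shape with $v=v_1$, $v'=v_2$.

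The main obstacle is making precise that intervals $[u,w]$ of length two in the face lattice of $\compact{\pc}$ really are diamonds, because $\compact{\pc}$ is an abstract polyhedral space glued from compactifications $\compact{\poly}$, not a single polytope. The way around this: the edges $(u,v)$ and $(v,w)$ all lie in $\hasse(\compact{\poly})$ for a single $\poly\in\pc$ with $w=\compact{F}\le\compact{\poly}$ (gluing is along faces, so the whole interval sits inside one $\compact{\poly}$), and by \autoref{lemma:P_pc} the face lattice of $\compact{\poly}$ is that of a polyhedral complex; restricting to the star of $u$ and below $w$ gives the face lattice of the $2$-cell $w$ localized at $u$, which is a genuine polytope face lattice and hence has the diamond property. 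Once this localization is stated cleanly, the rest is immediate, and in fact this two-intermediate-faces statement is precisely what one needs to later exhibit an explicit $\signedIR$ (e.g. via local orientations) satisfying \autoref{def:signedIR}.
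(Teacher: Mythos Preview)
The paper does not prove this lemma; it is quoted from \cite{curry_thesis} with no proof environment, followed only by a one-line remark on its meaning. Your argument---reducing to a rank-two interval $[u,w]$ via the grading of the Hasse diagram and then invoking the diamond property of the face lattice of the single polyhedron $w$---is the standard one and is correct for the statement as written.

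Two minor points. Your dimension count is off by one: an interval $[u,w]$ in a polytope's face lattice is isomorphic to the face lattice of a polytope of dimension $\dim(w)-\dim(u)-1$, so here of a \emph{segment}, whose lattice is precisely the diamond. And your extended discussion of $\compact{\pc}$ is not needed to \emph{prove} the lemma (once $u\le w$, the whole interval sits in the face lattice of the single cell $w$); it is rather about whether the lemma \emph{applies} to $\hasse(\compact{\pc})$, where the cells $\compact{F}$ are not classical polyhedra but span several strata. The paper does not address this beyond citing Curry and having established \autoref{prop:pc}. If you want a direct check: by \autoref{lemma:minface_is_parent} and \autoref{lemma:trunk_from_rays} the faces of $\compact{\poly}$ are in bijection with pairs $(G,\tau)$ with $G\le\poly$ and $\tau\le\tail(G)$, ordered by $G'\le G$ and $\tau'\ge\tau$, with $\dim=\dim G-\dim\tau$; a short case split on whether $G$, $\tau$, or both move in a rank-two interval, together with the classical diamond property for $\poly$ and for $\tail(\poly)$, gives exactly two middle elements in each case.
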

This lemma means that we just have to solve equations for squares in the Hasse
diagram to arrive at a valid signed incidence relation.

\begin{algorithm}
\begin{algorithmic}[1]
\Procedure{sir}{$\hasse(\pc)$}
\State Initialize $\signedIR$ to be zero everywhere.
\For {All edges $\emptyset\to u$ in $\edges(\hasse(\pc))$}
   \State $\signedIR(\emptyset, u)\gets 1$
\EndFor
\For {$i=2,\ldots,\dim(\pc)$}
   \For {$u\in\nodes_i(\hasse(\pc))$}
      \State $squares \gets \{[v,v',w]\in\nodes(\hasse(\pc))^3\ |\ \dim(v)=\dim(v')=i-1,\ \dim(w)=i-2,\ (w,v),(w,v'),(v,u),(v',u)\in\edges(\hasse(\pc))\}$
      \State $[v_0,v_0',w_0] \gets squares[0]$ \label{step:random}
      \State $\signedIR(v_0,u) \gets 1$ \label{step:initial}
      \State Solve all squares of $squares$
   \EndFor
\EndFor
\State \Return $\signedIR$
\EndProcedure
\end{algorithmic}
\caption{Signed incidence relation}
\label{alg:signedIR}
\end{algorithm}

\begin{prop}
\autoref{alg:signedIR}  produces a signed incidence relation on
the Hasse diagram $\hasse(\pc)$ of a polyhedral complex, as well as on the
Hasse diagram $\hasse(\compact{\pc})$ of its compactification.
\end{prop}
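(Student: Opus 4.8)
The plan is to verify that \autoref{alg:signedIR} correctly produces a signed incidence relation, first for a general polyhedral complex $\pc$ and then argue that the same proof applies verbatim to $\compact{\pc}$, since by \autoref{prop:pc} the compactification is again a polyhedral complex (when $\pc$ has a recession fan) and more generally an abstract polyhedral space by \autoref{prop:polyspace}, whose face lattice is all that the algorithm sees. The essential point is that the algorithm produces a map $\signedIR$ on the edges of $\hasse$ with values in $\{\pm 1\}$ such that every non-trivial instance of the defining equation from \autoref{def:signedIR} is satisfied; by \autoref{lemma:P_pc_2}-style reasoning (or rather the cited Lemma~6.1.8 from \cite{curry_thesis}) every such non-trivial equation is a \emph{square equation} $\signedIR(v,u)\signedIR(w,v) + \signedIR(v',u)\signedIR(w,v') = 0$, so it suffices to show that the algorithm makes a consistent choice on every square.

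First I would establish the local picture: for a polyhedral complex, given a face $u$ of dimension $i$ and a face $w$ of dimension $i-2$ with $w \le u$, there are exactly two faces $v, v'$ of dimension $i-1$ with $w < v < u$ and $w < v' < u$ (the ``diamond property'' of face lattices of polyhedral complexes). This is standard and follows from the fact that the interval $[w,u]$ in the face lattice of a polyhedron is itself the face lattice of a polytope of dimension $2$, hence a polygon, hence has exactly two edges between its bottom and top. So the squares collected in the algorithm's inner loop genuinely partition, for fixed $u$, into diamonds, and each $(i-1)$-face $v \le u$ participates in exactly those squares indexed by the $(i-2)$-faces $w$ covered-below in $[w,u]$. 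I would then argue that the graph on the $(i-1)$-faces below a fixed $u$, with an edge $\{v,v'\}$ for each common lower $(i-2)$-face, is connected — again because this graph is the edge graph (vertex-facet incidence graph restricted appropriately) of the polygon $[w,u]$, hence a cycle, hence connected. Connectivity is what guarantees that the ``solve all squares'' propagation starting from the single seed assignment $\signedIR(v_0, u) \gets 1$ in step~\ref{step:initial} determines $\signedIR(v,u)$ for all $(i-1)$-faces $v \le u$, consistently.

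The main obstacle — and the only genuinely non-trivial point — is \emph{consistency around cycles}: the propagation along the edges of the polygon graph assigns $\signedIR(v,u)$, but since the graph is a cycle rather than a tree, one must check that going all the way around the cycle returns the same value, i.e.\ that the product of the square-constraints around any cycle is $+1$. For a single polygon (the interval $[w,u]$ is not at issue here; rather the cycle through \emph{all} $(i-1)$-faces below $u$) this follows because each square equation reads $\signedIR(v,u)\signedIR(v',u) = -\signedIR(w,v)\signedIR(w,v')$, and multiplying these around the cycle, each $\signedIR(v,u)$ appears exactly twice (squares to $+1$) while the right-hand sides multiply to a product of lower-level values each of which, by induction on $i$ (base case $i = 2$ handled by the edges-from-$\emptyset$ initialization), was already chosen to satisfy \emph{its} square equations — so the product telescopes to $+1$. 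I would set this up as an induction on the dimension $i$: the inductive hypothesis is that $\signedIR$ restricted to edges between dimension $i-2$ and $i-1$ already satisfies all square equations among those levels, which is exactly what makes the product of right-hand sides around the cycle collapse. Finally, for the passage to $\compact{\pc}$: nothing in the argument used the embedding in $N_{\RR}$ — only that $\hasse(\compact{\pc})$ is the Hasse diagram of a polyhedral complex (Theorem~\ref{prop:pc}), equivalently an abstract polyhedral space (Theorem~\ref{prop:polyspace}), so that the diamond property and the polygon-cycle structure of intervals $[w,u]$ still hold; hence the same inductive argument applies, and $\signedIR$ computed by \autoref{alg:signedIR} on $\hasse(\compact{\pc})$ is a signed incidence relation. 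In the abstract-polyhedral-space case one should note that the diamond property holds chart-locally and that improperly intersecting recession cones contribute no new intervals $[w,u]$ linking distinct charts, so the verification remains purely combinatorial.
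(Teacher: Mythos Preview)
Your approach differs from the paper's and contains a genuine gap at the cycle-consistency step. The paper does not attempt a direct combinatorial proof that the propagation is consistent; instead it invokes \cite{curry_thesis} for the \emph{existence} of a signed incidence relation $\signedIR'$ obtained by choosing an orientation (a basis of the affine hull) of every face, and then observes that the algorithm's output must coincide with such an $\signedIR'$ up to flipping the orientation of each face so as to match the seed choices in Step~\ref{step:initial}. Existence is what makes the square-propagation consistent: once the lower levels are fixed and one edge $(v_0,u)$ is set, the values $\signedIR(v,u)$ are forced by the squares, and because some $\signedIR'$ satisfying all squares exists, the forcing cannot lead to a contradiction.

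Your direct argument tries to bypass existence and verify consistency around every cycle in the facet--ridge graph of $\partial u$. For a cycle $v_0,w_0,v_1,w_1,\dots,v_k=v_0$ the square constraints multiply to
\[
1\ =\ (-1)^k\prod_{j}\signedIR(w_j,v_j)\,\signedIR(w_j,v_{j+1}),
\]
so what must be shown is that the right-hand product equals $(-1)^k$. You assert this ``telescopes to $+1$'' by induction on $i$, but it does not telescope: the $w_j$ are distinct $(i-2)$-faces, so $\signedIR(w_j,v_{j+1})$ and $\signedIR(w_{j+1},v_{j+1})$ are unrelated edges, and the inductive hypothesis---that the lower levels satisfy \emph{their} square equations, which involve $(i-3)$-faces---says nothing about this product. (Try the $3$-cube: the facet--ridge graph is the octahedron, which has $3$-cycles, and the identity you need is a genuine statement about the lower-level signs, not a tautology.) The simplest repair is precisely the paper's move: cite existence of a geometric $\signedIR'$, and argue that the algorithm reproduces it after suitable per-face sign flips. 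Your remarks on why the same argument applies to $\hasse(\compact{\pc})$ are fine once the core argument is fixed.
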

\begin{proof}
From \cite{curry_thesis} we know that we can get a signed incidence relation on $P$ by
choosing a basis for the affine hull of every face. For an edge $(u,v)$ in the
Hasse diagram we assign $1$ if the orientations of the respective bases agree,
and $-1$ otherwise. \autoref{alg:signedIR} omits the step of choosing a basis. Instead
it chooses a random edge $(v_0,u)$ in Step~\ref{step:initial}, and assigns $1$
as its signed incidence relation. Assuming the signed incidence relation is
known for all edges whose endpoint has dimension $<\dim{u}$, the signed
incidence relation is now uniquely determined for any edge ending in $u$. We
apply this procedure for any node of dimension $\dim{u}$ and then proceed
inductively over the dimension.
\end{proof}

\section{Implementation in \polymake}\label{sec:polymake}
The closure operator of \autoref{prop:closure_operator} can now be plugged into
Ganter's algorithm of \autoref{sec:ganter}. In \polymake the datatype of the
Hasse diagram is a directed graph, with a decoration giving
auxiliary information for every node. This auxiliary information contains the
indices of the vertices forming the associated face and the dimension of the
face. The only missing information to determine the vertices of $\compact{\pc}$
as described in \autoref{prop:vertices} is the dimension of the recession cone for
every face.

There are several Hasse diagrams in \polymake associated to a
polyhedral complex:
\begin{enumerate}
\item The \texttt{BOUNDED\_COMPLEX.HASSE\_DIAGRAM} collects only the bounded
faces.
\item The \texttt{HASSE\_DIAGRAM} is the full Hasse diagram, including far
faces.
\item The \texttt{COMPACTIFICATION} is the Hasse diagram of the tropical
compactification as described in this paper.
\end{enumerate}
On each of these Hasse diagrams one can consider cellular (co-)sheaves

\begin{example}
We consider the polyhedral complex consisting of the positive $x$-axis.
Its compactification will have one additional vertex at infinity.
\begin{lstlisting}
polytope > application "fan";

fan > $pc = new PolyhedralComplex(POINTS=>[[1,0],[0,1]], INPUT_POLYTOPES=>[[0,1]]);

fan > print $pc->COMPACTIFICATION->ADJACENCY;
{1 2}
{3}
{3}
{4}
{}

fan > print $pc->COMPACTIFICATION->DECORATION;
({} 0 {} {})
({0} 1 {0 1} {1})
({1} 1 {0} {})
({0 1} 2 {0 1} {})
({-1} 3 {-1} {})

\end{lstlisting}

In the \texttt{ADJACENCY}, the $i$-th row contains a list of the neighbors of
the $i$-th node. The \texttt{DECORATION} has four entries:
\begin{enumerate}
\item A set of integers $S$, the indices of the vertices forming the associated face of $\compact{\pc}$.
\item The rank of the face (to get the dimension subtract one).
\item The realisation $\realisation(S)$ as indices of vertices of $\pc$.
\item The sedentarity $\sedentarity(S)$ as indices of the rays of $\pc$.
\end{enumerate}
We see that node with decoration \texttt{(\{0\} 1 \{0 1\} \{1\})} is our new
vertex at infinity.
\end{example}

\begin{figure}
\includegraphics[scale=.3]{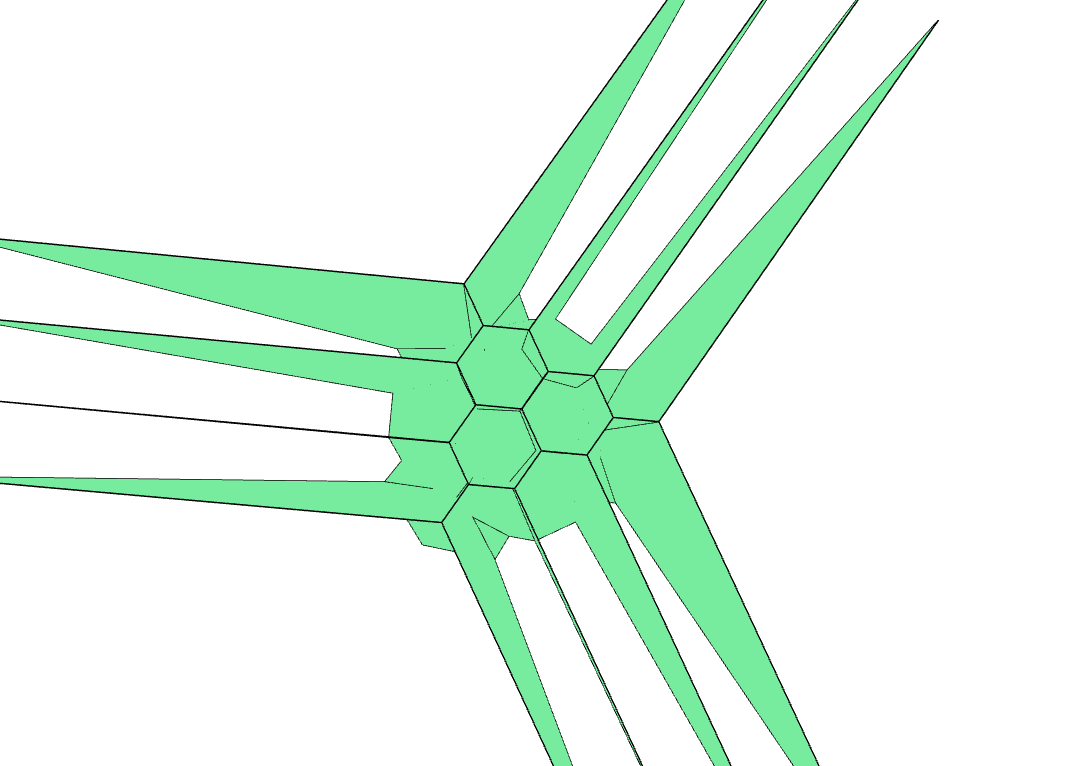}
\caption{Compactified tropical $K3$-surface}\label{fig:tropK3}
\end{figure}

Cellular (co-)sheaves in \polymake are realised as \texttt{EdgeMap}s on the
Hasse diagram. An \texttt{EdgeMap} is a map from the edges of a graph to some
category. In our case, edges are mapped to maps of vector spaces, represented
by matrices. 
 Our
extension for cellular sheaves can be found on github at
\url{https://github.com/lkastner/cellularSheaves}. 
In its \texttt{demo} folder
there are several \jupyter notebooks with commented examples on how to compute
cohomology of cellular sheaves. Since the code is too long to display here, we
will just briefly outline some examples.

\begin{example}

The compactification of  matroid fans explained in {\autoref{example:matroid} can be computed in \polymake}\label{example:matroid:code}.

We revisit Example~5 of \cite{ourTropHom} of the matroid of the so-called braid
arrangement of lines in $\CP^2$ , whose complement is the moduli space of
5-marked genus 0 curves $\mathcal{M}_{0,5}$, see \cite{ArdilaKlivans}. This matroid is also the graphical matroid of the complete graph $K_4$. 
Below is the polymake code which produces the matroid fan and its compactification. 

\begin{lstlisting}
application "fan";
$g = graph::complete(4);
$m = matroid::matroid_from_graph($g);
$t = tropical::matroid_fan<Max>($m);
$t->VERTICES;
$matFan = new PolyhedralComplex($t);
$matFanComp = $matFan->COMPACTIFICATION; 
print $matFanComp->nodes_of_rank(1)->size;

> 26
\end{lstlisting}
One can see that the compactification has $26 = 1+15+10$ vertices. 
The fan structure computed by $\polymake$ in this case is the coarsest structure 
of this fan, which corresponds to the minimal nested set compactification in the sense of 
\cite{FeichtnerSturmfels}. 
Note that vertices correspond to nodes of rank $1$ by \polymake's projective
viewpoint, i.e. the vertices of a polyhedral complex are the rays of the fan
one gets by embedding said complex at height one. 
The following code gives a full comparison of the number of faces of fixed dimension, in other words the F-vectors, of the
compactification and the original polyhedral complex:
\begin{lstlisting}
for(my $i=1; $i<$matFanComp->rank; $i++){
    print $matFanComp->nodes_of_rank($i)->size," ";
}
print "\n";
$matFanHasse = $matFan->HASSE_DIAGRAM;
$far = $matFan->FAR_VERTICES;
for(my $i=1; $i<$matFanHasse->rank; $i++){
    my @faces = @{$matFanHasse->nodes_of_rank($i)};
    @faces = map($matFanHasse->FACES->[$_], @faces);
    @faces = grep(($_*$far)->size < $_->size, @faces);
    print scalar @faces," ";
}

> 26 40 15 
> 1 10 15 
\end{lstlisting}
Note that due to \polymake considering a polyhedral complex as a fan, one gets
faces consisting only of far vertices. To get to the actual F-vector, these
have to be removed.

We can then use a loop to assemble the cosheaves for the tropical homology  on the compactification,
build their associated chain complexes and finally to compute their dimensions. 
\begin{lstlisting}
@rows = ();
for(my $i=0; $i<=$matFan->DIM; $i++){
    my $f = $matFan->compact_fcosheaf($i);
    my $d = build_full_chain($matFanComp, $matFanComp->ORIENTATIONS, $f->BLOCKS, false);
    push @rows, new Vector<Int>(topaz::betti_numbers($d));
}
print new Matrix(\@rows);

> 1 0 0
> 0 5 0
> 0 0 1
\end{lstlisting}
We see from the calculation that the tropical homology groups of the compactified matroid fan have the same Betti numbers as $\overline{\mathcal{M}_{0,5}}$, which is the blow up of $\CP^2$ in four points. Therefore, the compactification we have computed is the
 minimal nested set compactification in the sense of \cite{FeichtnerSturmfels}. The Chow group a of matroid with respect to a chosen nested set compactification is defined in   \cite{FeichtnerYuzvinsky}. Moreover, this can be generalised to any simplicial fan whose support is a matroid fan and the resulting Chow ring will satisfy Poincar\'e duality, Hard Lefschetz, and the Hodge Riemann bilinear relations \cite{amini_cubical}. 

The tropical homology of the fan prior to compactifying was computed in Example 5 of   \cite{ourTropHom}. This computation provides the duals of the  graded pieces of
the Orlik-Solomon algebra of this matroid. 

\end{example}

\begin{example}[Hodge numbers of a K3]\label{exam:k3}
\autoref{fig:tropK3} shows a compactified  K3 surface $\overline{X}$ in the
tropical toric variety $\mathbb{T} P^3$. The boundary of $\overline{X}$
consists of $4$ quartic tropical curves, one corresponding to each face of the
size $3$ standard simplex. The dimensions of the tropical homology
groups correspond to the Hodge numbers of a complex K3 surface.
Computing the dimensions of the tropical homology groups on the non-compact polyhedral complex
one arrives at
\begin{lstlisting}
0 0 34
0 31 3
1 0 1
\end{lstlisting}
or its transpose.
On the compactification, we arrive at the proper Hodge diamond:
\begin{lstlisting}
> print hodge_numbers($k3);
1 0 1
0 20 0
1 0 1
\end{lstlisting}
Of course this Hodge diamond has been known for some time, this example just
serves to give a glimpse at possible future computations.
\end{example}

\begin{remark}
In \autoref{exam:k3} the computation of a signed incidence relation was
already done in the background. In \polymake this is realised as an
\texttt{EdgeMap} on the Hasse diagram, labeling every edge with $\pm 1$. One
can access this property as \texttt{ORIENTATIONS} on both the
\texttt{HASSE\_DIAGRAM} and the \texttt{COMPACTIFICATION}. Due to the encoding
it is not trivial to make sense of the output. The nodes of the different Hasse
diagrams are numbered, the same is true for the edges, so to go backwards one
first needs to translate the index of an edge into its endpoints, and then
these endpoints back into faces.
\end{remark}

\begin{example}\label{example:betti}
In \cite{betti} the $\mathbb{Z}_2$-Betti numbers of the real part of a
hypersurface in a non-singular toric variety obtained  by a primitive
patchworking are equal to the Betti numbers of the sign cosheaf on the
associated tropical variety equipped with a real phase structure. The main
result of \cite{betti} is to bound the Betti numbers of the real part of the
hypersurface by sums of dimensions of the tropical homology groups.

These arguments apply to hypersurfaces in the torus  and partially compactified
(or compact) toric varieties. In the partially compactified (or compactified)
case one must work with the homology of the sign cosheaf on the closure of the
tropical variety.  The extension of the sign cosheaf to the compactification of
tropical hypersurfaces in toric varieties has also  been implemented in our
extension. The following is an example of a degree three curve in two-dimensional
tropical projective space, also using \polymake's patchworking framework
\cite{paul}.

\begin{lstlisting}
$g = toTropicalPolynomial("min(3*x_0,2*x_0+x_1,2*x_0+x_2,927+x_0+2*x_1,351+x_0+x_1+x_2,30+x_0+2*x_2,2856+3*x_1,1884+2*x_1+x_2,942+x_1+2*x_2,411+3*x_2)");
$trop = new Hypersurface<Min>(POLYNOMIAL=>$g);
# Print the monomials, as they might get reordered inside the hypersurface.

print $trop->COMPACTIFICATION->DECORATION;
> ({} 0 {} {})
> ({0} 1 {0 11} {0})
> ({1} 1 {0 8} {0})
> ({2} 1 {2 10} {2})
> ({3} 1 {0 9} {0})
> ({4} 1 {1 9} {1})
> ({5} 1 {2 4} {2})
> ({6} 1 {1 6} {1})
> ({7} 1 {2 3} {2})
> ({8} 1 {1 3} {1})
> ({9} 1 {11} {})
> ({10} 1 {8} {})
> ({11} 1 {10} {})
> ({12} 1 {9} {})
> ({13} 1 {7} {})
> ({14} 1 {5} {})
> ({15} 1 {6} {})
> ({16} 1 {4} {})
> ({17} 1 {3} {})
> ({0 9} 2 {0 11} {})
> ({1 10} 2 {0 8} {})
> ({2 11} 2 {2 10} {})
> ({3 12} 2 {0 9} {})
> ({4 12} 2 {1 9} {})
> ({5 16} 2 {2 4} {})
> ({6 15} 2 {1 6} {})
> ({7 17} 2 {2 3} {})
> ({8 17} 2 {1 3} {})
> ({9 11} 2 {10 11} {})
> ({9 13} 2 {7 11} {})
> ({10 12} 2 {8 9} {})
> ({10 13} 2 {7 8} {})
> ({11 14} 2 {5 10} {})
> ({13 15} 2 {6 7} {})
> ({14 15} 2 {5 6} {})
> ({14 16} 2 {4 5} {})
> ({16 17} 2 {3 4} {})
> ({-1} 3 {-1} {})

$pw = $trop->PATCHWORK(SIGNS=>[1,0,1,1,1,1,1,1,1,1]);
$cosheaf = $pw->sign_cosheaf();
$comp = $trop->COMPACTIFICATION;
$chain = fan::build_full_chain($comp, $comp->ORIENTATIONS, $cosheaf, true);
print topaz::betti_numbers<GF2>($chain);
> 2 2
\end{lstlisting}
\end{example}

\begin{example}[Smooth tropical cubic]
A smooth tropical cubic corresponds to a regular unimodular triangulation of
the dilated simplex $3\cdot\Delta_3$. Just as in \autoref{example:betti} one can
compute the Betti numbers of the real cubic via patchworkings and cellular
sheaves:
\begin{lstlisting}
> print topaz::betti_numbers<GF2>($chain);
1 7 1
\end{lstlisting}
\end{example}
\printbibliography

\end{document}